\let\oldmarginpar\marginpar
\renewcommand\marginpar[1]{\-\oldmarginpar[\raggedleft\footnotesize #1]%
{\raggedright\footnotesize #1}}
\theoremstyle{plain}
\newtheorem{thm}{Theorem}[section]
\newtheorem{lemma}[thm]{Lemma}
\newtheorem{prop}[thm]{Proposition}
\newtheorem{cor}[thm]{Corollary}
\theoremstyle{definition}
\newtheorem{definition}[thm]{Definition}
\newtheorem{remark}[thm]{Remark}
\theoremstyle{remark}
\numberwithin{equation}{section}
\newcommand{\Hom}{\text{Hom}}
\newcommand{\F}{\mathbb{F}}
\newcommand{\N}{\mathbb{N}}
\newcommand{\Z}{\mathbb{Z}}
\newcommand{\R}{\mathbb{R}}
\newcommand{\C}{\mathbb{C}}
\newcommand{\SB}{\mathcal{B}}
\newcommand{\SL}{\mathcal{L}}
\newcommand{\PP}{\mathbb{P}}
\newcommand{\A}{\mathcal{A}}
\newcommand{\La}{\Lambda}
\newcommand{\p}{\varphi}
\renewcommand{\a}{\alpha}
\newcommand{\e}{\varepsilon}
\newcommand{\dd}{\partial}
\newcommand{\op}{\operatorname}
\newcommand{\sse}{\subseteq}
\newcommand{\lr}{\longrightarrow}
\newcommand{\x}{\times}
\newcommand{\sm}{\setminus}
\newcommand{\Aut}{\operatorname{Aut}}
\newcommand{\wt}{\widetilde}
\newcommand{\wh}{\widehat}
\newcommand{\Int}{\operatorname{Int}}
\newcounter{daggerfootnote}
\newcommand*{\daggerfootnote}[1]{%
    \setcounter{daggerfootnote}{\value{footnote}}%
    \renewcommand*{\thefootnote}{\fnsymbol{footnote}}%
    \footnote[2]{#1}%
    \setcounter{footnote}{\value{daggerfootnote}}%
    \renewcommand*{\thefootnote}{\arabic{footnote}}%
    }
\begin{document}
\begin{abstract}
In this article we associate a combinatorial differential graded algebra to a cubic planar graph $G$. This algebra is defined combinatorially by counting binary sequences, which we introduce, and several explicit computations are provided. In addition, in the appendix by K.~Sackel the $\F_q$--rational points of its graded augmentation variety are shown to coincide with $(q+1)$-colorings of the dual graph.
\end{abstract}

\title{Differential algebra of cubic planar graphs}
\subjclass[2010]{Primary: 53D10. Secondary: 53D15, 57R17.}

\author{Roger Casals}
\address{Massachusetts Institute of Technology, Department of Mathematics, 77 Massachusetts Avenue Cambridge, MA 02139, USA}
\email{casals@mit.edu}

\author{Emmy Murphy}
\address{Northwestern University, Department of Mathematics, 2033 Sheridan Road Evanston, IL 60208, USA}
\email{e\_murphy@math.northwestern.edu}

\maketitle
\section{Introduction}\label{sec:intro}
This article defines new algebraic structures associated to cubic graphs. The inspiration for our construction comes from symplectic field theory \cite{EES,EGH} and the theory of constructible sheaves \cite{NZ,STZ}. To every planar cubic graph, we assign to it a differential graded algebra, which describes a number of combinatorial features of a graph. In particular, the augmentation variety of this algebra recovers the chromatic data of the dual graph, established in Appendix \ref{app:kevin}, written by K.~Sackel. In terms of graph Legendrians, this can be interpreted as the algebraic part of the conjectural correspondence between the augmentation variety of the Legendrian contact homology algebra and the moduli space of rank--1 constructibles sheaves \cite{NRSSZ,TZ}, which also constitute part of the developing program for mirror symmetry \cite{AV2,AENV}.

The combinatorics presented in this work lie at the intersection of three fields: the contact topology of Legendrian surfaces \cite{Ar,Be}, the combinatorics of cubic graphs \cite{Di,St} and the enumerative geometry of open Gromov-Witten invariants \cite{AV1,FL}. In addition, the structures we introduce have some interesting connections to spectral networks \cite{GMN1,GMN2}: the binary sequences arising in our construction distinguish a special type of framed 2d-4d BPS states of the supersymmetric $N=2$ 4d-theories of class $S$ associated to the Lie algebra $\mathfrak{g}=su(2)$. These connections will be explored in more depth in a later work.

For now, this article defines and explores this algebraic structure from a purely combinatorial perspective. Given a planar cubic graph $G$ with $2g+2$ vertices and a ground field $\F$, we consider the base ring $\Lambda_G = \F[e^{\pm1}_1,\ldots,e^{\pm1}_{3g+3}]$ of Laurent polynomials in the set of edges, and the unital $\Lambda_G$--algebra generated by the faces of $G$ and three additional generators:
$$\wt\A_G=\langle x,y,z,f_j: 1\leq j\leq g+2\rangle.$$

The goal is to endow $\wt\A_G$ with the structure of a dg--algebra which contains interesting information about the graph, and in particular show that it contains the chromatic polynomial. Here is an example: consider the modified 4-prism graph in the left of Figure \ref{fig:blowcube}. We equip the graph $G$ with blue and red paths as in the right of Figure \ref{fig:blowcube}, together we denote the choice of these decorations by $\Gamma$.

\begin{figure}[h!]
\centering
  \includegraphics[scale=0.75]{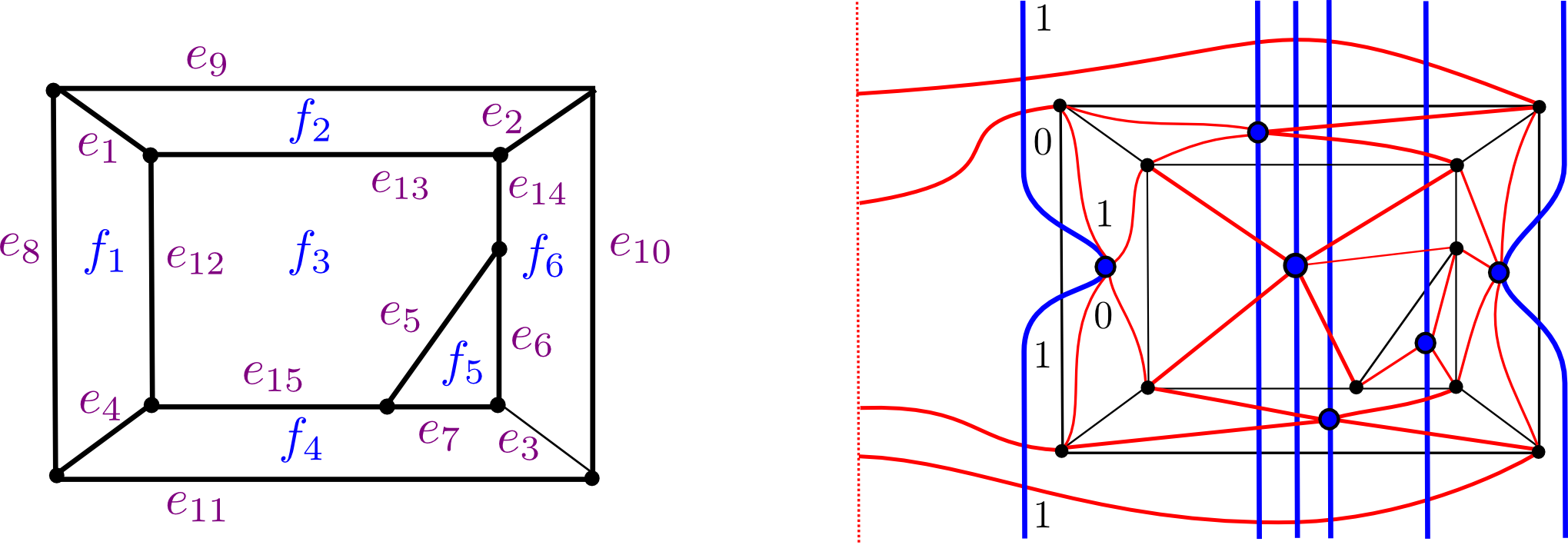}
  \caption{Planar cubic graph $G$ (left) and its decoration (right).}\label{fig:blowcube}
\end{figure}

Then the count of binary sequences along the blue paths interacting with the red paths allows us to define a differential operator $\wt\dd_\Gamma$ on $\wt\A_G$, whereas the red paths shall dictate the differential on the faces $F=\{f_1,\ldots,f_6\}$. For instance, in the case of Figure \ref{fig:blowcube} above the differential of $x$ contains the information of the binary sequence along the leftmost vertical blue path. The rules to count binary sequences are explained in Definition \ref{def:total A} in Section \ref{sec:variants}.

As the example shows, the graph $G$ is first endowed with several decorations $\Gamma$ before the dg--structure can be defined. These decorations are not canonical but necessary in order to be able for binary sequences to interact with the graph $G$ and endow $\wt\A_G$ with a differential structure $\wt\dd_\Gamma$. From the combinatorial perspective, it seems quite outstanding that, as we show, the resulting algebraic structure is independent of these choices. The central result, which includes proving such independence, is the definition of this differential operator $\wt\dd_\Gamma:\wt\A_G\lr\wt\A_G$, and can be, in its bare form, stated in the following theorem:

\begin{thm}\label{thm:main}
The count of binary sequences on the decorated graph $(G,\Gamma)$ endows $\wt\A_G$ with the structure of a differential graded algebra $(\wt\A_G,\wt\dd_\Gamma)$. The dg--isomorphism type of the dg-algebra $(\wt\A_G,\wt\dd_\Gamma)$ only depends on $G$.
\end{thm}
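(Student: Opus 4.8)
The plan is to prove the two assertions in sequence: first that $\wt\dd_\Gamma$ is a genuine differential of the correct degree, making $(\wt\A_G,\wt\dd_\Gamma)$ a dg-algebra, and then that the resulting dg-isomorphism type does not see the decoration $\Gamma$.

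\textbf{The dg-structure.} I would begin by specifying $\wt\dd_\Gamma$ on the generators $x,y,z,f_j$ via the binary-sequence count of Definition~\ref{def:total A}, declaring the edge variables $e_i$ closed, and extending to all of $\wt\A_G$ by $\Lambda_G$-linearity and the graded Leibniz rule. Checking that each binary sequence contributing to $\wt\dd_\Gamma$ of a generator lowers the assigned grading by exactly one shows that $\wt\dd_\Gamma$ is a degree $-1$ derivation; the substantive point is then $\wt\dd_\Gamma^2=0$. Following the usual pattern in contact homology, the terms of $\wt\dd_\Gamma^2$ evaluated on a generator are indexed by \emph{two-story} configurations, i.e. ordered pairs of binary sequences in which the output of the first feeds the input of the second along the blue paths. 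I would organize these configurations combinatorially and construct a fixed-point-free involution on them that reverses their contribution to $\wt\dd_\Gamma^2$: each two-story configuration carries a distinguished interior breaking locus, and interchanging the order in which the two adjacent elementary transitions occur pairs the configurations so that their coefficients cancel (tracking the induced sign in odd characteristic; the pairing alone suffices in characteristic two). This is precisely the combinatorial shadow of the two boundary points of a rigid one-parameter family of sequences, and it yields $\wt\dd_\Gamma^2=0$.

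\textbf{Independence of $\Gamma$.} For the second assertion I would first isolate a finite list of \emph{elementary moves} relating any two admissible decorations—sliding a red or blue path across a single vertex or edge, and interchanging the relative position of a red and a blue strand—and prove the key lemma that the space of decorations of a fixed $G$ is connected under these moves. It then suffices to treat each move in isolation. For each move $\Gamma\rightsquigarrow\Gamma'$ I would write down an explicit $\Lambda_G$-algebra map $\Phi\colon\wt\A_G\lr\wt\A_G$, equal to the identity on most generators plus correction terms recording the binary sequences created or destroyed by the move, and verify the intertwining relation $\Phi\circ\wt\dd_\Gamma=\wt\dd_{\Gamma'}\circ\Phi$ by comparing the two differentials term by term on generators. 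Because each $\Phi$ is upper-triangular with respect to the grading-and-generator filtration, it is automatically invertible, so it is a dg-isomorphism rather than merely a quasi-isomorphism; composing such isomorphisms along a path of moves joining any two decorations gives the stated independence.

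\textbf{Main obstacle.} I expect the hardest step to be the $\wt\dd_\Gamma^2=0$ verification, and specifically the construction of the sign-consistent involution on two-story sequences: the bookkeeping of how binary sequences split and recombine at the vertices, together with the check that the induced signs are genuinely opposite, is the crux where the combinatorics must exactly mirror the boundary of a one-dimensional moduli space. A secondary difficulty is confirming completeness of the list of elementary moves—that no exotic reconfiguration of the red and blue paths escapes the equivalence they generate—since the independence statement fails if the move list is too small.
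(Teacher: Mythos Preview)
Your outline for independence of $\Gamma$ is broadly in line with the paper: reduce to a finite list of local moves, observe that several of them leave $\wt\dd_\Gamma$ literally unchanged, and for the remaining ones (the tine switch, the change of threads at infinity, the reversal of an edge orientation) write down an explicit triangular automorphism of $\wt\A_G$ intertwining the two differentials. The paper is more specific about which moves are genuinely nontrivial---only the tine switch alters the binary-sequence count in a way that demands a correction term of the form $H(B)f_mf_n$ added to $x,y,z$---but your sketch captures the architecture.

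The real problem is your $\wt\dd_\Gamma^2=0$ argument. The picture of ``two-story configurations, i.e.\ ordered pairs of binary sequences in which the output of the first feeds the input of the second'' is simply not what the terms of $\wt\dd_\Gamma^2 w$ look like here. For $w\in\{x,y,z\}$ one has $\wt\dd_\Gamma w=\sum H(B)f_j$ and then $\wt\dd_\Gamma f_j=\sum_{v\in f_j}H(\tau_{f_j}(v))\in\Lambda_G$, so a term of $\wt\dd_\Gamma^2 w$ is a pair $(B,\tau_f(v))$ consisting of a binary sequence along a tine together with a single thread emanating from the center of that face---not a second binary sequence, and with no ``breaking locus'' at which one could swap the order of two transitions. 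The usual LCH gluing heuristic you are invoking does not transplant to this combinatorial setup in any evident way.

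What the paper does instead is quite different and is the idea you are missing. One chooses an auxiliary foliation $\{\SL_t\}$ of the square whose leaves include all the tines, identifies finitely many critical leaves (those hitting a vertex, tangent to an edge or thread, or equal to a tine), and places one regular ``standard'' leaf between each consecutive pair of critical leaves. Starting from a term $(B,\tau_f(v))$ of $\wt\dd_\Gamma^2 w$, one replaces the switch at $c_f$ by a switch on $\tau_f(v)$, obtaining a binary sequence on the adjacent standard leaf, and then \emph{propagates} this sequence across successive critical leaves according to a short case analysis (edge tangency, thread tangency, vertex crossing, boundary leaf, tine). The propagation eventually lands back in the set of pairs $(B',\tau_{f'}(v'))$, and one checks that the accumulated sign is $-1$; this defines the fixed-point-free involution. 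Without this foliation-and-propagation mechanism your proposal has no candidate for the involution, so as written the $\wt\dd_\Gamma^2=0$ step is a genuine gap.
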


Theorem \ref{thm:main} states that the dg-algebra isomorphism type of $(\wt\A_G,\wt\dd_G)$ is independent of all the involved choices $\Gamma$, which we prove in Section \ref{sec:invar}, and that the operator $\wt\dd_\Gamma$ is indeed a differential, i.e.~$\wt\dd_\Gamma^2=0$, which is the content of Section \ref{sec:dsquared}.

In addition, from this dg-algebra $(\wt\A_G,\wt\dd_G)$, which shall be called the dg-algebra of binary sequences, it is possible to extract different algebraic structures, which are explained in Section \ref{sec:real DGA}. For instance, there is a canonical action of $\Z$ on $(\wt\A_G,\wt\dd_\Gamma)$ acting by dg--isomorphisms and we introduce the dg-algebra $\A_G \sse \wt\A_G$ of fixed elements of this action, which is equipped with $\dd_\Gamma = \wt\dd_\Gamma|_{\A_G}$. This invariant dg--algebra $(\A_G, \dd_\Gamma)$ is the algebraic structure that leads to the combinatorics of graphs colorings:

\begin{thm}[K. Sackel, Appendix \ref{app:kevin}]\label{thm:main kevin}
Let $G$ be a planar cubic graph and $\chi_{G^*}$ the chromatic polynomial of its dual. Then
$$\chi_{G^*}(q+1)=(q^3-q)\cdot|\Hom_{\mbox{\tiny dga}}((\A_G, \dd_\Gamma),(\F_q,0))|.$$

In addition, let $X_G=\Hom_{\mbox{\tiny dga}}((\A_G, \dd_\Gamma),(\F,0))$ be the augmentation variety of $(\A, \dd_\Gamma)$ and $\chi_{G^*}(\F\mathbb{P}^1)$ the projective chromatic $\F$-variety of $G^*$. Then there is an isomorphism of algebraic varieties
$$X_G \cong \chi_{G^*}(\F\mathbb{P}^1)/\op{PSL}(2, \F),$$
where $\op{PSL}(2, \F)$ acts diagonally on $(\F\mathbb{P}^1)^{g+3}$.
\end{thm}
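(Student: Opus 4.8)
The plan is to recognize that an augmentation of $(\A_G,\dd_\Gamma)$ is nothing but a $\op{PGL}(2,\F)$--gauge class of a $\PP^1$--coloring of the triangulation $G^*$, and then to count such colorings using the sharp $3$--transitivity of Möbius transformations.

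First I would make the augmentation variety completely explicit. Since the target $(\F,0)$ has trivial differential, a dga--map $\varepsilon\colon(\A_G,\dd_\Gamma)\lr(\F,0)$ is just an algebra homomorphism with $\varepsilon(\dd_\Gamma a)=0$ for every generator $a$; hence $\varepsilon$ is determined by the scalars $\varepsilon(f_j)$ and $\varepsilon(x),\varepsilon(y),\varepsilon(z)$ subject to the equations $\varepsilon(\dd_\Gamma f_j)=0$ and $\varepsilon(\dd_\Gamma x)=\varepsilon(\dd_\Gamma y)=\varepsilon(\dd_\Gamma z)=0$. Unwinding the binary--sequence differential from Definition \ref{def:total A}, I expect the face--equations to assert exactly that the values attached to two faces of $G$ sharing an edge are \emph{distinct}, while $x,y,z$ record a normalization of the distinguished (outer) face. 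Reading the $g+3$ faces of $G$ as the vertices of the dual $G^*$, this identifies $\varepsilon$ with a point of $(\F\PP^1)^{g+3}$ whose coordinates on adjacent vertices of $G^*$ differ, taken modulo the diagonal Möbius action rigidified by $x,y,z$.

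Next I would set $\chi_{G^*}(\F\PP^1)\sse(\F\PP^1)^{g+3}$ to be the variety of such $\PP^1$--colorings of $G^*$, on which the Möbius group acts diagonally; over an algebraically closed field $\op{PSL}(2,\F)=\op{PGL}(2,\F)$, consistent with the factor $q^3-q=|\op{PGL}(2,\F_q)|$ below. Over $\F_q$ one has $|\PP^1(\F_q)|=q+1$, so the $\F_q$--points of $\chi_{G^*}(\F\PP^1)$ are precisely the proper $(q+1)$--colorings of $G^*$ and $|\chi_{G^*}(\F_q\PP^1)|=\chi_{G^*}(q+1)$. The decisive geometric input is that $G^*$, being the planar dual of a cubic graph, is a \emph{triangulation}: every face is a triangle, so every proper coloring uses at least three distinct colors. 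Consequently any Möbius transformation fixing a coloring fixes $\ges 3$ points of $\PP^1$ and is the identity, i.e.\ the action is \emph{free}. Over $\F_q$ this freeness together with $|\op{PGL}(2,\F_q)|=q^3-q$ gives $\chi_{G^*}(q+1)=(q^3-q)\cdot|\chi_{G^*}(\F_q\PP^1)/\op{PGL}(2,\F_q)|$, and matching orbits with augmentations via the first step yields the counting identity.

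Finally, for the isomorphism of varieties I would promote the set--theoretic dictionary of the first step to a morphism: the $\op{PGL}(2,\F)$--invariant functions on $\chi_{G^*}(\F\PP^1)$ (cross--ratios of the coloring values, with the role of $x,y,z$ being to pin down a slice of the free action) should furnish coordinates on $X_G$, producing a map $\chi_{G^*}(\F\PP^1)/\op{PSL}(2,\F)\lr X_G$ that freeness makes an isomorphism of geometric quotients. The main obstacle is the very first step: faithfully translating the combinatorial differential $\dd_\Gamma$, defined by counting binary sequences, into Möbius geometry — that is, verifying that the augmentation equations on the $f_j$ are exactly the edge--distinctness conditions and that $x,y,z$ cut out a clean $\op{PGL}(2,\F)$--slice rather than merely a set--theoretic section. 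Once this translation is pinned down, both the counting formula and the variety isomorphism follow from the free diagonal action as above.
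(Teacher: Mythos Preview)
Your proposal has a fundamental misreading of the grading structure of $\A_G$, and this derails the whole argument. The faces $f_j$ sit in degree $1$ and the generators $x,y,z$ sit in degree $2$; since $(\F,0)$ is concentrated in degree $0$, any augmentation $\varepsilon$ must send all of these to zero. The actual data of an augmentation is a tuple of values $\lambda_i=\varepsilon(e_i)\in\F^*$ on the \emph{edges}, which generate the degree-$0$ part $\Lambda_G$ (or $\Lambda_T$ in the $T$-versal model). The only constraints are $\varepsilon(\dd f_j)=0$, and $\dd f_j=\sum_{v\in f_j}H(\tau_{f_j}(v))$ is a Laurent polynomial in the edges, not an ``adjacent faces are distinct'' condition. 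The equations $\varepsilon(\dd x)=\varepsilon(\dd y)=\varepsilon(\dd z)=0$ are automatic because $\dd x,\dd y,\dd z$ are $\Lambda_G$-combinations of the $f_j$, all of which already go to zero. So your picture of $\varepsilon$ as directly assigning $\PP^1$-colors to faces, with $x,y,z$ providing a gauge slice, does not match the algebra.

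Once you correct this, the real work---which your proposal skips---is to build a map from edge data $(\lambda_i)$ to face colorings. The paper does this via the cross-ratio formula
\[
(-1)^{r_v+r_w}\frac{\lambda_q\lambda_s}{\lambda_r\lambda_t}=-\frac{(\kappa_A-\kappa_B)(\kappa_C-\kappa_D)}{(\kappa_B-\kappa_C)(\kappa_D-\kappa_A)}
\]
attached to each edge, and a propagation argument (Lemma~\ref{lem:aug_to_color}) showing that the face equation $\sum_v H(\tau_f(v))=0$ is exactly what guarantees the coloring closes up consistently around each face. Injectivity and the inverse morphism are then established separately, with a descent argument from $\overline{\F}$ to $\F$. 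Your intuition about freeness of the $\op{PGL}_2$ action and the factor $q^3-q$ is correct, but that is the easy part; the substantive content is the cross-ratio dictionary between edge values and colorings, which you have not supplied.
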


From the algebraic structure of $(\wt\A_G, \wt\dd_\Gamma)$ we can also consider the set of its quotient algebras. Given any subset of edges $\mathcal{E}$, we can define a quotient algebra of $(\wt\A_G, \wt\dd_\Gamma)$ by declaring the edges $e \in \mathcal{E}$ to be equal to $1$, and the dg-isomorphism type of the resulting algebra strongly depends in general on the choice of the subset $\mathcal{E}$. However, suppose we choose a tree $T\sse G$ which spans the set of vertices $V$ except for one, and let $\mathcal{E}=E\setminus T$ be the set of all edges which are not contained in $T$. Then the quotient algebra $(\A_T,\dd_T)$ associated to $E\setminus T$ is dg--isomorphic to the dg-algebra $(\A_G, \dd_\Gamma)$.

This is remarkable for two reasons: its dg--isomorphism type does not depend on the choice of tree $T$, and it gives an effective method for calculating $(\A_G, \dd_\Gamma)$ without having to consider a group action. From a geometric perspective, this corresponds to a gauge-fixing condition for a slice in the GIT quotient of $\op{Spec}(\wt\A_G, \wt\dd_\Gamma)$, which is the algebraic mirror of the Legendrian defined by $G$.\\

{\bf Acknowledgements}: We are grateful to Maxime Gabella, Richard Stanley, David Treumann and Eric Zaslow for useful conversations. R.~Casals is supported by the NSF grant DMS-1608018 and a BBVA Research Fellowship and E.~Murphy is partially supported by NSF grant DMS-1510305 and a Sloan Research Fellowship. E.~Murphy would like to thanks the Radcliffe Institute of Advanced Studies where part of the article was written while in residence.\hfill$\Box$

\section{The dg-algebra of binary sequences}\label{sec:variants}

Let us start with the basic definitions in homological algebra \cite[Chapter V.3]{GelMan}, where $\F$ is a field.

\begin{definition}\label{def:DGA}
A {\bf differential graded algebra} $(\A, \dd)$, or {\bf dg-algebra}, is a pair given by a $\Z$-graded $\F$-algebra $\A = \bigoplus_{i \in \Z} \A_i$ and a chain differential $\dd:\A\lr\A$, i.e. an $\F$--linear map which satisfies
\begin{itemize}
 \item[1.] $\dd^2=0$ and $\dd(\A_i)\sse\A_{i-1}$, $\forall i\in\Z$.
 \item[2.] $\dd(x\cdot y)=\dd(x)\cdot y+(-1)^{i}x\cdot\dd(y) \text{ for } x \in \A_i$.
\end{itemize}

A map $\p: (\A,\dd_A) \lr (\mathcal{B},\dd_B)$ between two dg-algebra is a {\bf dg-algebra map} if it is a unital ring map such that $\p \circ \dd_\A = \dd_{\mathcal{B}} \circ \p$ and $\p(\A_i) \sse \mathcal{B}_i$ $\forall i\in\Z$. A dg-algebra map which is also a bijection is called a {\bf dg-algebra isomorphism}.\hfill$\Box$
\end{definition}

\begin{remark}\label{rmk:iso vs chain iso}
The equivalence of dg-algebras given by a dg-algebra isomorphism is exceedingly strong in geometric contexts \cite{EES,EGH} and instead one typically works in the category of dg-algebras up to stable tame isomorphisms, chain isomorphisms, or quasi-isomorphisms. In our case, the dg-algebra in Theorem \ref{thm:main} is invariant up to dg-algebra isomorphisms, which is the strongest of these notions. This proves to be advantageous, as it allows us to construct more computable invariants than its homology, such as the characteristic algebra \cite[Section 3]{Ng}.\hfill$\Box$
\end{remark}

Let $G$ be a cubic planar graph equipped with a fixed embedding $G \sse (-1,1) \x (-1,1)$, and denote by $V, E$ and $F$ its sets of vertices, edges, and faces respectively; consider also the unique integer $g\in\N$ satisfying $|V| = 2g+2$, $|E| = 3g+3$ and $|F| = g+2$.

\begin{definition}\label{def:garden}
Let $G \sse (-1,1) \x (-1,1)$ be a planar cubic graph.
\begin{itemize}
\item[-] An {\bf orientation} of $G$ is a chosen orientation for each edge $e \in E$.\\

\item[-] A {\bf centering} of $G$ is a choice, for each face $f \in F$, a point $c_f$ in the interior of $f$, which is called the {\bf center} of $f$.\\

\item[-] A {\bf web} of $G$ is a collection of embedded closed arcs $W=\{\tau_f(v)\}$, indexed by $f\in F$ and $v\in f$, such that each $\tau_f(v)$ connects $v$ to $c_f$, $\Int(\tau_f(v))\sse \Int(f)$, and $\tau_f(v)$ are mutually disjoint from each other for all $f\in F$.\\

\noindent The set $W$ can be enlarged to $\widehat W$ by adding a collection of arcs $\{\tau_0(v)\}$, one for each vertex $v$ adjacent to the exterior of $G$, which connects $v$ to $\{-1\} \x [-1, 1] $. These $\tau_0(v)$ must be disjoint from $G$ away from $v$, and mutually disjoint from each other for all $f\in F$. The arcs in $\widehat W$ are called the {\bf threads} of $G$, and we refer to the threads in $\widehat W \sm W$ as the {\bf threads at infinity}.\\

\item[-] A {\bf rake} of $G$ is a choice, for each face $f \in F$, of a path $\gamma_f$ with endpoints respectively contained in $[-1, 1] \x \{-1\}$ and $[-1, 1] \x \{1\}$ which passes through $c_f$. The path $\gamma_f$ is called the {\bf tine} of $f$, and we require that all tines are mutually disjoint, disjoint from $V$, and all tines intersect all edges and all threads of $G$ transversely.\\
\end{itemize}
A {\bf garden} $\Gamma$ for $G$ consists of a choice of the above four decorations: a centering $\{c_f\}_{f\in F}$, an enlarged web $\widehat W$, a rake $\{\gamma_f\}_{f\in F}$, and an orientation.
\end{definition}

\begin{remark}
The pairs $(G,\Gamma)$ consisting of a graph $G$ decorated with a garden $\Gamma$ are to be considered up to smooth isotopy respecting the combinatorial structure of the graph and the incidences between the elements of the garden and the graph itself. In addition, we have adopted the color convention where the graph is depicted in black and given a garden $\Gamma$, the tines are depicted in the blue, and threads in red.\hfill$\Box$
\end{remark}

Given a garden and a tine $\gamma_f$, define $E(\gamma_f) \sse \gamma_f$ to be the set of all points where $\gamma_f$ intersects an edge and consider the set of points $\alpha(\gamma_f) \sse \gamma_f$ where $\gamma_f$ intersects the interior of a thread. Note that by definition, $c_f \notin \alpha(\gamma_f)$. The following definition contains the central combinatorial object of this article.

\begin{definition}\label{def:binary}
Let $G$ be a graph equipped with a garden $(\widehat W,\{c_f,\gamma_f\}_{f\in F})$ and orient each tine $\gamma_f$ from bottom to top. A {\bf binary sequence along} $\gamma_f$ is a lower semicontinuous map $B:\gamma_f \lr \{0,1\}$, which is constant outside of the set $E(\gamma_f) \cup \alpha(\gamma_f) \cup \{c_f\}$, and conforms to the following properties:
\begin{itemize}
\item[-] At all points of $E(\gamma_f)$ the value of $B$ must switch.
\item[-] At each $c_f$, the value of $B$ must switch from $0$ to $1$.
\item[-] At points $\alpha(\gamma_f)$ the value of $B$ may either switch from $0$ to $1$, or remain constant.
\end{itemize}
The values of $B$ at the respective endpoints of $\gamma_f$ are called the initial and final values of $B$.

Given a binary sequence $B$ along $\gamma_f$, we denote by $\a(B) \sse \alpha(\gamma_f)$ the set of points where $\gamma_f$ intersects the interior of a thread and in addition $B$ switches its value.\hfill$\Box$
\end{definition}

Let $\Lambda_G := \F[e_1, e_1^{-1}, \ldots, e_{3g+3}, e_{3g+3}^{-1}]$ be the commutative algebra of Laurent polynomials in the edge set $E$ and define $\wt \A_G$ to be the $\Lambda_G$--algebra freely generated by the faces of $G$ and three additional generators $x$, $y$ and $z$, i.e.:

$$\wt\A_G=\langle x,y,z,f_1,\ldots , f_{g+2}: f_j \in F\rangle_{\Lambda_G}.$$

Thus the elements of $\wt\A_G$ are finite sums of elemtents of the form $P(e)w$, where $P(e)$ is a Laurent polynomial in the edge set and $w$ is any word in the alphabet $\langle f_1, \ldots, f_{g+2}, x, y, z\rangle$; multiplication is given by formal concatenation and $\Lambda_G$ is the center of $\wt\A_G$. Endow the algebra $\wt\A_G$ with the grading induced by the following grading on the generators:
$$|x|=|y|=|z|=2,\quad |f_i|=1,\quad 1\leq i\leq g+2,$$
and $|e|=0$ for any edge $e\in E$.\\

We will now use binary sequences to define a chain differential $\wt\dd_G:\wt\A_G \lr \wt\A_G$, which requires the following definitions. Let $B$ be a binary sequence along $\gamma_f$. Given $q \in E(\gamma_f)$ an intersection between $\gamma_f$ and an edge $e\in E$ define
$$H(B, q) := \pm e^{\pm 1},$$
according to the rules in Figure \ref{fig:tineedgetable}.

\begin{figure}[h!]
\centering
\includegraphics[scale=0.6]{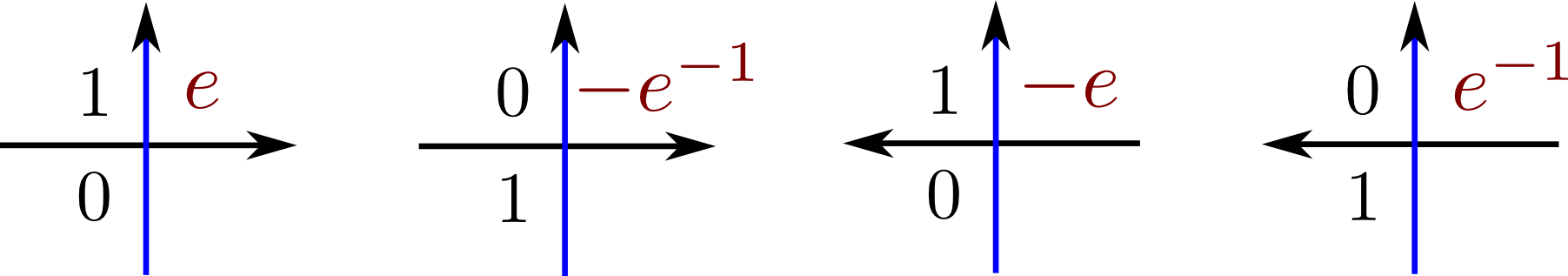}
\caption{The contributions $H(B,q)$ for an intersection points $q$ between a tine $\gamma_f$, depicted vertical and in blue, and an edge $e$, in black and horizontal.}
\label{fig:tineedgetable}
\end{figure}

The threads of the web $\widehat W$ contribute as follows. Let $r_v \in \{0,1,2,3\}$ to be the number of edges adjacent to a vertex $v\in V$ which are oriented outward. If $\tau$ is a thread we define
$$H(\tau) := (-1)^{r_v} e_n e_m e_k^{-1},$$
where $v$ is the endpoint of $\tau$, $e_n$ and $e_m$ are the edges containing $v$ which are adjacent to $\tau$, and $e_k$ is the edge containing $v$ which is opposite of $\tau$. 

\begin{figure}[h!]
\centering
\includegraphics[scale=0.75]{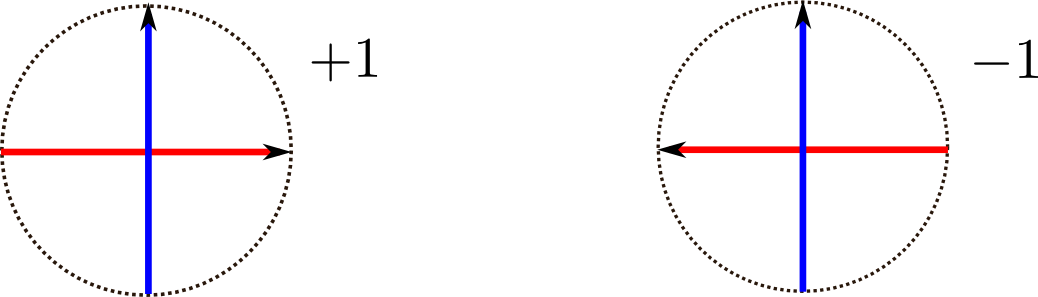}
\caption{The contributions $H(B,q)$ for an intersection points $q$ between a tine $\gamma_f$, depicted vertical and in blue, and a thread, in red and horizontal.}
\label{fig:tinethread}
\end{figure}

Finally, orient all threads $\tau$ towards the endpoint of $\tau$ contained in the vertex set $V$, and at every point $p \in \alpha(\gamma_f)$ where $\gamma_f$ intersects a thread $\tau$ there is a sign $\sigma(p)$ according to the oriented intersection as depicted in Figure \ref{fig:tinethread}. We define the $\Lambda_G$ contribution of $B$ as
$$H(B) := \prod_{q \in E(\gamma_f)} H(B,q)\cdot\prod_{p \in \alpha(B)} \sigma(p) H(\tau_p),$$
where $\tau_p$ denotes the thread containing $p\in\alpha(\gamma_f)$. The quantity $H(B)\in \La_G$ is a combinatorial index which the differential $\wt \dd$ counts.

\begin{definition}\label{def:total A}
Let $G$ be a graph equipped with a garden $\Gamma=(\widehat W,\{c_f,\gamma_f\}_{f\in F})$. Let $\mathcal{B}_{a,b}(f)$ denote the set of all binary sequences along $\gamma_f$ with initial value $a\in \{0,1\}$ and final value $b \in \{0,1\}$.

The {\bf differential of the dg--algebra of binary sequences} $(G,\Gamma)$ is the map
$$\wt\dd_\Gamma: \wt\A_G \to \wt\A_G$$
defined on generators as
$$\wt\dd_\Gamma f_j = \sum_{v \in f_j} H(\tau_f(v)),\qquad\forall f_j \in F,$$
$$\wt\dd_\Gamma x = \sum_{j=1}^{g+2} \left(\sum_{B \in \mathcal{B}_{1,1}(f_j)}H(B)\right)f_j,$$ $$\wt\dd_\Gamma y = \sum_{j=1}^{g+2} \left(\sum_{B \in \mathcal{B}_{1,0}(f_j)}H(B)\right)f_j,$$ $$\wt\dd_\Gamma z = \sum_{j=1}^{g+2} \left(\sum_{B \in \mathcal{B}_{0,0}(f_j)}H(B)\right)f_j.$$
The operator $\wt\dd_\Gamma$ extends by Leibniz's rule to $\wt\A_G$ with $\wt\dd_\Gamma e = 0$ for all $e \in E$.\hfill$\Box$
\end{definition}

The main result of this article is to show that Definition \ref{def:total A} yields the structure of a dg-algebra and such structure is independent of all the choices made to define it up to dg-isomorphism. In precise terms, we will prove the following theorem:

\begin{thm}\label{thm:total A}
Let $G$ be a planar cubic graph equipped with a garden $\Gamma=(\widehat W,\{c_f,\gamma_f\}_{f\in F})$, then $(\wt \A_G, \wt \dd_\Gamma)$ is a dg--algebra. In addition, the dg--algebra isomorphism type of $(\wt \A_G, \wt \dd_\Gamma)$ does not depend on the choice of garden $\Gamma$.
\end{thm}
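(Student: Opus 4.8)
The plan is to treat the two assertions separately: first that $\wt\dd_\Gamma$ is a differential, and then that the dg-isomorphism type is garden-independent.

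For the dg-algebra claim, the grading condition $\wt\dd_\Gamma(\wt\A_i)\sse\wt\A_{i-1}$ holds by inspection of Definition \ref{def:total A}, and the Leibniz rule holds by construction, so the only substantive point is $\wt\dd_\Gamma^2=0$. Since $\wt\dd_\Gamma$ is a derivation with $\wt\dd_\Gamma e=0$, it suffices to verify $\wt\dd_\Gamma^2=0$ on the generators $f_j,x,y,z$. On each face generator this is automatic: $\wt\dd_\Gamma f_j\in\Lambda_G$ lies in degree $0$, and every element of $\Lambda_G$ is $\wt\dd_\Gamma$-closed. For $x$ (and identically for $y,z$), expanding by Leibniz and using $\wt\dd_\Gamma(\Lambda_G)=0$ collapses the computation to
$$\wt\dd_\Gamma^2 x=\sum_{j=1}^{g+2}\left(\sum_{B\in\mathcal{B}_{1,1}(f_j)}H(B)\right)\left(\sum_{v\in f_j}H(\tau_{f_j}(v))\right)\in\Lambda_G,$$
so the entire content of $\wt\dd_\Gamma^2=0$ is the vanishing of this Laurent polynomial, and of its analogues for the boundary conditions $(1,0)$ and $(0,0)$.

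I would prove this vanishing by a sign-reversing involution, in the spirit of the $\dd^2=0$ arguments in Legendrian contact homology \cite{EES,EGH}. The key observation is that the interaction between the two factors is local at the center $c_{f_j}$: a binary sequence in $\mathcal{B}_{a,b}(f_j)$ is forced to switch $0\to1$ at $c_{f_j}$, and the threads $\tau_{f_j}(v)$ appearing in $\wt\dd_\Gamma f_j$ are exactly the web arcs emanating from $c_{f_j}$. I would introduce an auxiliary set of once-degenerate configurations, namely binary sequences allowed a single extra switch along a thread $\tau_{f_j}(v)$ near $c_{f_j}$, placed in bijection with the monomials of the product above, and then exhibit an involution resolving each such degeneration in precisely two ways. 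The factor $(-1)^{r_v}$ in $H(\tau)$ together with the oriented intersection sign $\sigma(p)$ must be shown to differ by $-1$ across the two resolutions, yielding termwise cancellation. This sign bookkeeping, reconciling $(-1)^{r_v}$, $\sigma(p)$, and the $\pm e^{\pm1}$ of edge crossings with the switching rules of Definition \ref{def:binary}, is where I expect the real work to lie.

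For garden-independence I would connect any two gardens through a finite sequence of elementary moves and produce an explicit dg-isomorphism across each move. Generic isotopies of tines, threads, and centers leave $\wt\dd_\Gamma$ unchanged, so it suffices to analyze the codimension-one degenerations: a tine becoming tangent to or crossing an edge, a tine crossing a thread, a tine passing through a vertex, a center crossing an edge, a modification of the web arcs, and a reversal of an edge orientation. Reorienting a single edge $e$ should simultaneously flip $e\leftrightarrow e^{-1}$ in the edge contributions $H(B,q)$ and toggle the factors $(-1)^{r_v}$ at its two endpoints, and this is intertwined by the $\Lambda_G$-automorphism $e\mapsto e^{-1}$ together with a diagonal rescaling of the affected face generators. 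Each remaining move alters the coefficient $\sum_B H(B)$ of some $f_j$, or the value of $\wt\dd_\Gamma f_j$, by a controlled amount absorbed by a triangular change of variables in $x,y,z$ and a rescaling of the $f_j$ by units of $\Lambda_G$; in each case I would verify that the resulting map is a bijective, grading-preserving ring map commuting with the differential. The main obstacle is the pair of moves where the count genuinely jumps, a tine passing through a vertex and a center crossing an edge: there the sets $\mathcal{B}_{a,b}(f_j)$ change cardinality, and one must confirm that the change in $\sum_B H(B)$ is exactly compensated by the prescribed automorphism. As in the first part, the crux is the sign and exponent accounting, and I would organize the argument so that the same local cancellation proving $\wt\dd_\Gamma^2=0$ is reused to check that the candidate maps intertwine the two differentials.
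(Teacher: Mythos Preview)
Your reduction of $\wt\dd_\Gamma^2=0$ to the vanishing of the displayed Laurent polynomial is correct, but the proposed sign-reversing involution local to each center $c_{f_j}$ cannot succeed. The expression splits over faces as $\sum_j P_j$ with $P_j=\bigl(\sum_{B}H(B)\bigr)\bigl(\sum_{v\in f_j}H(\tau_{f_j}(v))\bigr)$, and the individual summands $P_j$ are typically nonzero, as the explicit computations in Section~\ref{sec:app} show; the cancellation is between terms attached to \emph{different} tines. The paper's proof reflects this: it chooses a foliation $\{\SL_t\}$ of the square extending the rake and transports each pair $(B,\tau_{f_j}(v))$ leaf by leaf through the foliation, handling critical events (edge tangencies, thread tangencies, vertex crossings, and the boundary leaves) one at a time, until the sequence is absorbed at some other tine $\gamma_{f_k}$ as a pair $(B',\tau_{f_k}(v'))$ of opposite sign. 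The involution is this global transport, and no construction confined to a neighborhood of a single $c_{f_j}$ will locate the partner term.

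For garden independence you have also missed the decisive move. Two rakes whose tines visit the centers in a different left-to-right order are related by a \emph{tine switch} (Move~V in Lemma~\ref{lem:rake moves}), which does not appear in your list, and the resulting change in $\wt\dd_\Gamma$ is not absorbed by any map of the shape you propose. The paper's intertwining isomorphism is $\p(x)=x+\lambda\,f_mf_n$, $\p(y)=y+\lambda'\,f_mf_n$, $\p(z)=z+\lambda''\,f_mf_n$, where the coefficients are themselves counts of binary sequences along an auxiliary curve through both $c_{f_m}$ and $c_{f_n}$; this adds a product of two degree-$1$ generators and lies outside the class of triangular substitutions among $x,y,z$ together with unit rescalings of the $f_j$. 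Two smaller points: a ``center crossing an edge'' is not a move that occurs, since each $c_f$ stays in the interior of its face, and the vertex-crossing move that worries you in fact leaves $\wt\dd_\Gamma$ invariant (Lemma~\ref{lem:easy rake moves}); and the orientation reversal of a single edge is intertwined by $e_i\mapsto -e_i$, not by $e_i\mapsto e_i^{-1}$ with face rescalings.
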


The {\bf dg--algebra of binary sequences} of $G$ is the isomorphism type of $(\wt\A_G,\wt\dd_G)=(\wt\A_G,\wt\dd_\Gamma)$ where $\wt\dd_G=\wt\dd_\Gamma$ is defined for an arbitrary choice of garden $\Gamma$. Sections \ref{sec:invar} and \ref{sec:dsquared} are devoted to the proof of Theorem \ref{thm:total A}, which is the correctly refined statement of Theorem \ref{thm:main}.

\section{Independence of the choice of garden} \label{sec:invar}

Let $G$ be a planar cubic graph and $\Gamma_0,\Gamma_1$ two gardens, this goal of this section is to prove that there is a graded algebra isomorphism $\p\in\Aut(\wt\A_G)$ such that $\p\circ\wt\dd_{\Gamma_0}=\wt\dd_{\Gamma_1}\circ\p$. The choice of the rake is the most delicate matter, which is dealt in the following proposition.

\begin{prop}\label{prop:rake indep}
Let $G$ be a planar cubic graph equipped with two gardens $\Gamma_0,\Gamma_1$ which differ only at their respective rakes $R_0=\{\gamma^0_f\}$ and $R_1=\{\gamma^1_f\}$. Then there exists a graded algebra isomorphism $\p\in\Aut(\wt\A_G)$ such that $\p\circ\wt\dd_{\Gamma_0}=\wt\dd_{\Gamma_1}\circ\p$. In addition, the isomorphism can be chosen to fix $\Lambda_G \sse \wt\A_G$.
\end{prop}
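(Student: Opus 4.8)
The plan is to reduce the statement to a sequence of elementary rake moves and to exhibit an explicit conjugating automorphism for each. Two rakes $R_0$ and $R_1$ are isotopic rel endpoints through an isotopy that may cross edges, threads, vertices, and the centers $c_f$; by a standard genericity argument I would arrange the isotopy so that at each moment at most one such degenerate event occurs, and so that the intermediate configurations are again valid rakes between events. Thus it suffices to treat the finitely many elementary moves in which a single tine $\gamma_f$ sweeps across exactly one feature of the diagram: (i) passing across a single transverse edge–thread, edge–edge, or thread–thread crossing (a Reidemeister-II-type or triple-point-type move), (ii) sliding a tine across a vertex $v$, and (iii) sliding a tine past a center $c_{f'}$ of a neighboring face. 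For each move I must produce $\p \in \Aut(\wt\A_G)$ fixing $\Lambda_G$ with $\p \circ \wt\dd_{\Gamma_0} = \wt\dd_{\Gamma_1} \circ \p$, and then compose the resulting automorphisms over the whole isotopy.

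The first step is a bookkeeping analysis: for each elementary move I would compare $\mathcal{B}_{a,b}(f)$ and the weights $H(B)$ before and after. The key observation is that a tine crossing a feature changes the set of switch-points available to a binary sequence $B$, and hence reshuffles the contributions to $\wt\dd_\Gamma x$, $\wt\dd_\Gamma y$, $\wt\dd_\Gamma z$, while the values of $\wt\dd_\Gamma f_j$ (which depend only on the web via $H(\tau_f(v))$, not on the rake) are unchanged. For the moves of type (i) I expect the two binary-sequence counts to agree on the nose, up to the signs $\sigma(p)$ recorded at thread crossings and the $\pm e^{\pm 1}$ assignments of Figure \ref{fig:tineedgetable}; here the conjugating $\p$ should be either the identity or a simple rescaling of $x,y,z$ by a unit of $\Lambda_G$. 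The genuinely nontrivial content lies in moves of types (ii) and (iii), where the combinatorics of $\mathcal{B}_{1,1}, \mathcal{B}_{1,0}, \mathcal{B}_{0,0}$ are redistributed among one another.

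The heart of the argument is constructing $\p$ explicitly for the vertex and center moves. Because a tine passing through $c_f$ is forced to switch $0 \to 1$ there, sliding $\gamma_f$ across $c_{f'}$ of an adjacent face—or across a trivalent vertex where three edges force three mandatory switches—will convert certain sequences in $\mathcal{B}_{0,0}$ into ones in $\mathcal{B}_{1,0}$ and so on. I would encode the bijective correspondence of binary sequences directly as a linear substitution on the generators $x,y,z$ of the form $x \mapsto x + (\text{polynomial in } y,z,f_j,e)$, and check that this is a graded algebra automorphism (it is triangular, hence invertible, and it preserves the grading since $|x|=|y|=|z|=2$). Verifying the intertwining relation $\p \circ \wt\dd_{\Gamma_0} = \wt\dd_{\Gamma_1} \circ \p$ then reduces to matching coefficients face by face, using $\wt\dd_\Gamma^2=0$ (proved in Section \ref{sec:dsquared}) only insofar as it guarantees the relations among the face-differentials are respected. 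I expect the main obstacle to be precisely the sign bookkeeping at the vertex move (ii): the factor $(-1)^{r_v}$ in $H(\tau)$ and the orientation-dependent signs $\sigma(p)$ must be tracked carefully so that the substitution $\p$ uses the correct signs, and one must confirm that the composite automorphism over the full isotopy is well defined, i.e.~independent of how the isotopy is subdivided into elementary moves. This coherence can be checked by verifying that the two automorphisms associated to the two orders of two commuting, spatially disjoint moves agree, which follows from the locality of $H(B)$ in the features it crosses.
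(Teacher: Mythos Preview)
Your overall strategy---reduce to elementary rake moves and build an explicit conjugating automorphism for each---matches the paper's. But you have misidentified which move carries the content, and your list of moves has a genuine gap.

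The paper's Lemma~\ref{lem:easy rake moves} shows that for the moves corresponding to your (i) and (ii) (thread tangency, edge tangency, vertex crossing, and also a thread rotating past a center), the operator $\wt\dd_\Gamma$ is literally unchanged: the identity automorphism suffices. In particular the vertex crossing is \emph{not} nontrivial; a direct table check shows the contributions in each $\mathcal B_{a,b}$ match exactly across the move, with no conversion between $\mathcal B_{0,0}$ and $\mathcal B_{1,0}$ and no mixing among $x,y,z$. The move you are missing is the \emph{tine switch} (Move~V): two adjacent parallel tines $\gamma_{f_m},\gamma_{f_n}$ exchange their left--right order. This cannot be realized by isotoping a single tine, since tines must remain mutually disjoint---your move (iii), ``sliding $\gamma_f$ past $c_{f'}$,'' is therefore not well-posed, as $\gamma_{f'}$ already passes through $c_{f'}$. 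The paper treats the tine switch by introducing an auxiliary curve $\gamma_{m,n}$ through both centers, counting binary sequences along it, and defining
\[
\p(x) = x + \Bigl(\sum_{B \in \mathcal B_{1,1}(\gamma_{m,n})} H(B)\Bigr) f_m f_n,
\]
with analogous formulas for $y$ and $z$. The correction is always a $\Lambda_G$-multiple of the degree-$2$ word $f_m f_n$; there is no appearance of $y$ or $z$ in $\p(x)$, contrary to what you anticipated. The intertwining relation is then verified by a direct bijection between the binary sequences along $\gamma^0_{f_m},\gamma^1_{f_m}$ that differ (those switching at a thread inside $f_n$) and pairs consisting of a sequence along $\gamma_{m,n}$ together with a choice of thread in $f_n$; this is where the factor $\wt\dd f_n$ emerges. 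The argument does not use $\wt\dd^2=0$ at all.
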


In order to prove this, we shall use the following two lemmas.

\begin{lemma}\label{lem:rake moves}
Two rakes $R_0=\{\gamma^0_f\}\sse\Gamma_0$ and $R_1=\{\gamma^1_f\}\sse\Gamma_1$ differ by a finite sequence of the five moves depicted in Figures \ref{fig:Tangencies}, \ref{fig:VertexCrossing}, \ref{fig:TineSwitch} and \ref{fig:TineSwitch2} and a smooth plane isotopy.\hfill$\Box$
\begin{figure}[h!]
  \centering
  \includegraphics[scale=0.75]{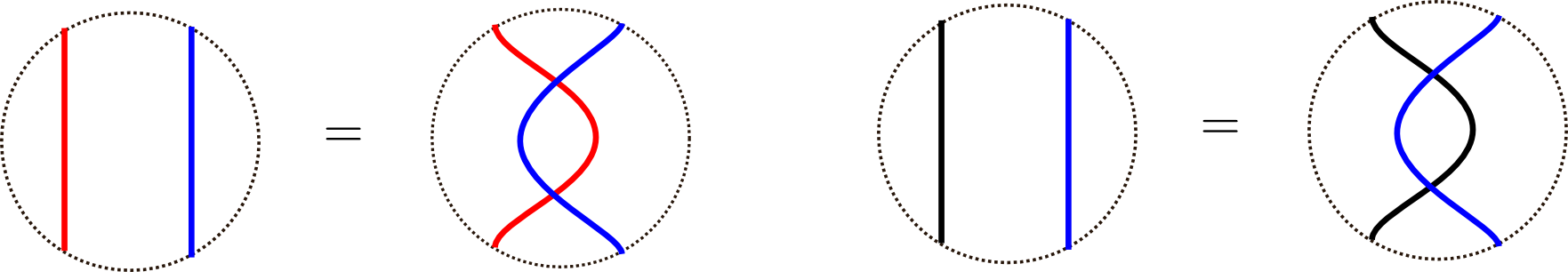}
  \caption{Moves I and II in Lemma \ref{lem:rake moves}: thread (red) and tine (blue) tangency on the left, and edge (black) and tine (blue) tangency on the right. These moves are referred to as tangencies since a unique point of tangency occurs when the strands on the right hand side of the equalities are horizontally separated to the left hand side configurations.}\label{fig:Tangencies}
  \vspace{0.3cm}
  
  \includegraphics[scale=0.85]{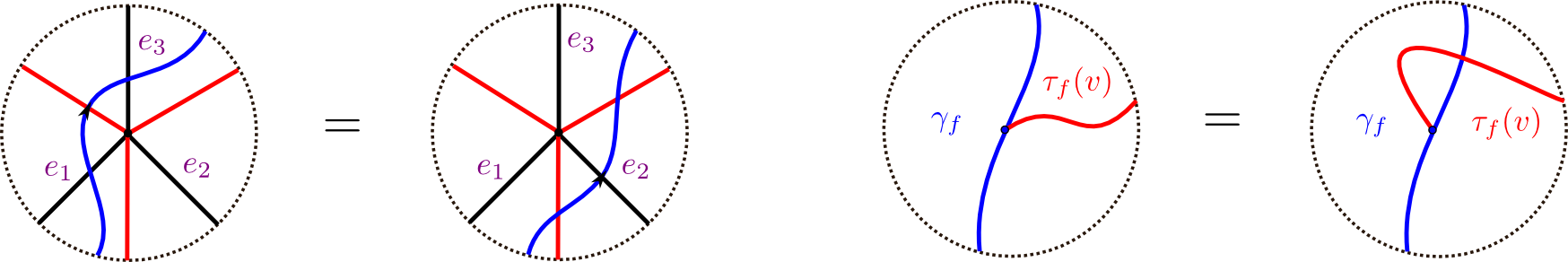}
  \caption{Move III and IV: a tine (blue) crossing a vertex, with threads (red) and edges (black), and a rotation of a thread (red) along the center of a face (blue dot).}\label{fig:VertexCrossing}
  \vspace{0.3cm}
  
  \includegraphics[scale=0.55]{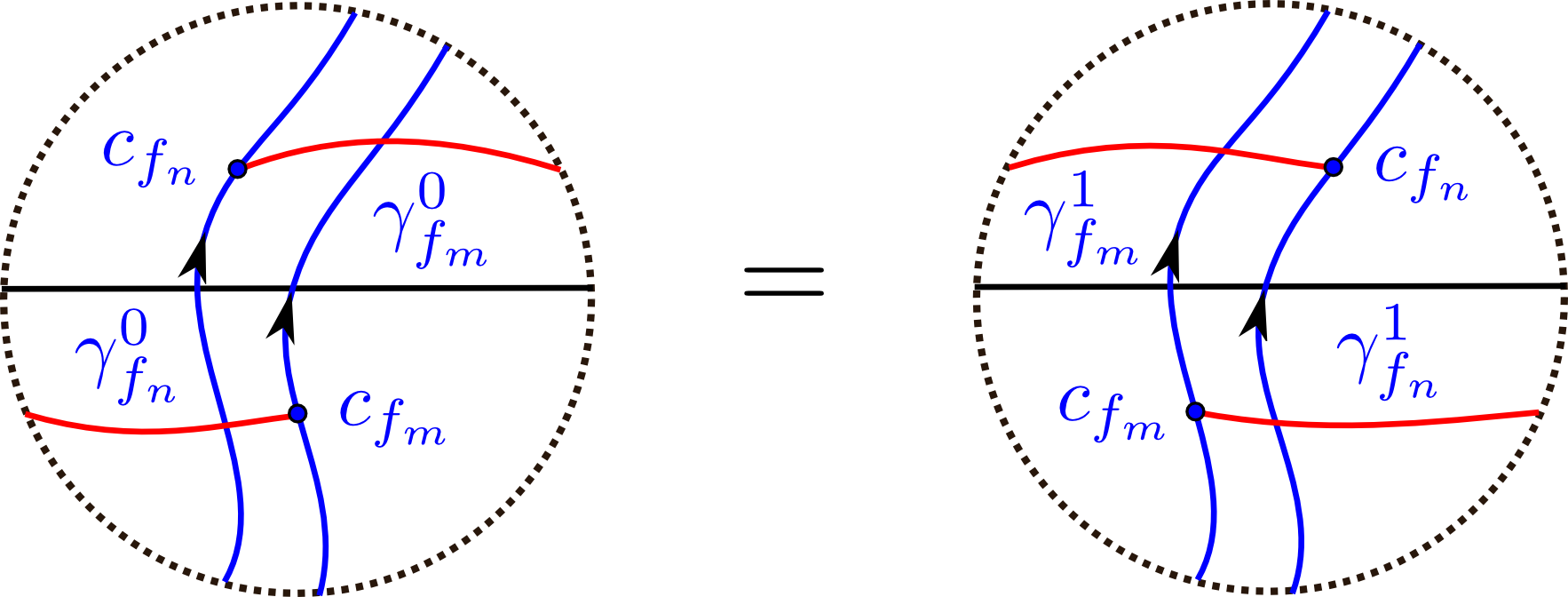}
  \caption{Move V, Part I: tine switch between two adjacent tines $\gamma_{f_n},\gamma_{f_m}$ in the rake with the centers separated by a unique edge.}\label{fig:TineSwitch}
  \vspace{0.3cm}
  
  \includegraphics[scale=0.55]{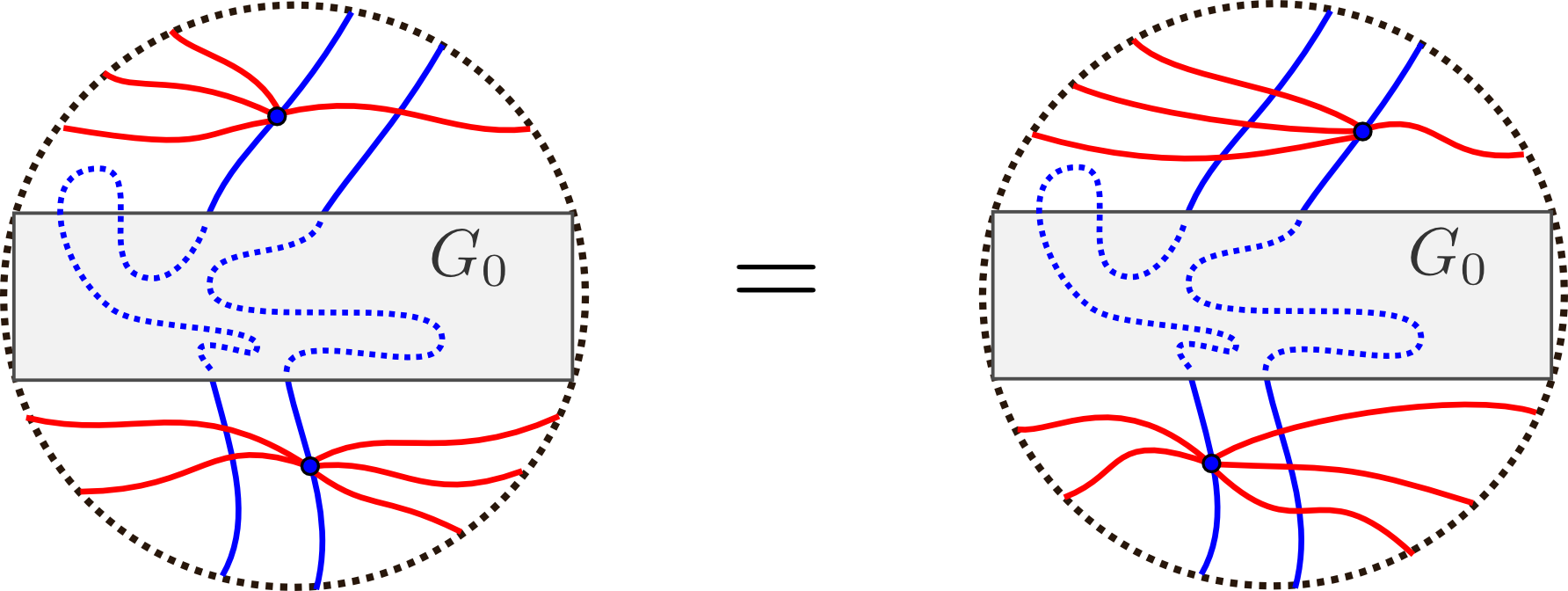}
  \caption{Move V, Part II: tine switch in the general setting with a piece $G_0$ of the graph in between; in this depiction the subgraph $G_0$ might be arbitrarily complicated.}\label{fig:TineSwitch2}
\end{figure}
\end{lemma}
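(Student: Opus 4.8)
The plan is to realize any two rakes as the endpoints of a generic smooth family and to read off the moves from the codimension-one degenerations that such a family must cross. Since $\Gamma_0$ and $\Gamma_1$ share the same centering, web and orientation, only the tines move; I would therefore fix the graph $G$, its threads $\wh W$ and the centers $\{c_f\}$, and regard a rake as a point in the space $\mathcal{R}$ of tuples of embedded arcs $\{\gamma_f\}_{f\in F}$, where each $\gamma_f$ runs from $[-1,1]\x\{-1\}$ to $[-1,1]\x\{1\}$ and is constrained to pass through $c_f$. The valid-rake conditions --- mutual disjointness of the tines, disjointness from $V$, and transversality to all edges and all threads --- cut out an open subset $\mathcal{R}^\circ \sse \mathcal{R}$. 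The first step is to note that $\mathcal{R}$ itself is connected, since one can drag any tine to any other arc with the same incidence data through possibly-degenerate intermediates, so that $R_0$ and $R_1$ can be joined by a path $\{R_t\}_{t\in[0,1]}$ in $\mathcal{R}$ with $R_0,R_1\in\mathcal{R}^\circ$.

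The second step is genericity: using a jet-transversality and general-position argument I would perturb $\{R_t\}$ rel endpoints so that $R_t\in\mathcal{R}^\circ$ for all but finitely many $t$, and so that at each of the finitely many exceptional parameters exactly one degeneration, of the simplest type, occurs at a single point of the square. Between consecutive exceptional parameters the rake stays in one component of $\mathcal{R}^\circ$, where it changes only by a smooth plane isotopy; compactness of $[0,1]$ guarantees the list of crossings is finite, so the whole problem reduces to identifying each individual wall-crossing with one of the five moves.

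The third step is the classification of walls. The boundary $\mathcal{R}\sm\mathcal{R}^\circ$ is the union of the loci where one of the defining conditions fails, and I would check that a codimension-one failure is always one of five types: a tine acquiring a simple tangency with a thread (Move I) or with an edge (Move II), as in Figure \ref{fig:Tangencies}; a tine sweeping across a vertex $v\in V$ (Move III); a tine rotating at a center $c_f$, where it meets the common endpoint of the threads $\tau_f(v)$ so that its cyclic order relative to those threads changes (Move IV), both as in Figure \ref{fig:VertexCrossing}; and two tines exchanging their left-to-right order (Move V), as in Figures \ref{fig:TineSwitch} and \ref{fig:TineSwitch2}. Completeness then amounts to arguing that every other coincidence --- a triple tangency, a tine meeting two special points simultaneously, a tangency located exactly at a vertex or a center, or a cusp birth along a single tine that alters the crossing data --- is of codimension $\ges 2$ and hence avoided by the generic family, whereas a cusp that changes no crossing can be removed by isotopy.

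The main obstacle is the analysis at the centers together with the tine switch, which is precisely what makes the choice of the rake the most delicate matter. Near a center $c_f$ the ambient configuration is already non-generic, since all the threads $\tau_f(v)$ terminate there, so the local model governing how a tine may be rotated past these threads must be worked out directly, and it is this model that produces Move IV. The tine switch of Move V is subtler still: because each tine is pinned to its own center, two tines cannot be exchanged by a naive transverse crossing of two blue arcs, and carrying $\gamma_{f_n}$ past $\gamma_{f_m}$ forces it across the entire piece of the graph lying between $c_{f_n}$ and $c_{f_m}$. I would handle this by first treating the elementary case in which the two centers are separated by a single edge (Figure \ref{fig:TineSwitch}), and then reducing the general configuration, with an arbitrary subgraph $G_0$ in between (Figure \ref{fig:TineSwitch2}), to a composition of the elementary tine switch with Moves I--IV; verifying that this reduction is always available, and hence that the five moves suffice, is the crux of the argument.
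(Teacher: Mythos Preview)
The paper does not actually prove this lemma: immediately after the statement it says ``The proof of Lemma \ref{lem:rake moves} is strictly combinatorial and it is left as an exercise for the reader,'' and the $\Box$ at the end of the statement signals this. So there is no proof in the paper to compare against.

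Your outline is the standard and correct approach to such a statement: connect the two rakes by a generic one-parameter family, observe that the complement of the open set $\mathcal{R}^\circ$ is a finite union of codimension-one strata, and identify each wall-crossing with one of the listed local moves. Your enumeration of the five walls matches the figures, and your remarks about the subtleties at the centers (Move IV) and at the tine switch (Move V) correctly locate where the work lies. One small point worth sharpening: in Move V the actual codimension-one event in your family is simply two tines becoming tangent at a single point, and the content of Figures \ref{fig:TineSwitch} and \ref{fig:TineSwitch2} is that after such a tangency the two tines can be taken to be parallel by a preliminary application of Moves I--IV, so that the ``switch'' is a normal form for crossing that wall rather than a separate kind of degeneration. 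With that clarification your plan is complete.
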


The proof of Lemma \ref{lem:rake moves} is strictly combinatorial and it is left as an exercise for the reader. In contrast, the following lemma, for which we provide a detailed proof, features the operator $\wt\dd_\Gamma$.

\begin{lemma}\label{lem:easy rake moves}
The operator $\wt\dd_\Gamma$ is invariant under moves \emph{I, II, III} and \emph{IV}.
\end{lemma}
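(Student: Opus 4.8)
The plan is to exploit that $\wt\dd_\Gamma$ is defined on generators and that each of the moves I--IV is supported in a small disk $D$ meeting a single tine (or a single face--center), so that it suffices to prove a local statement. First I would observe that the face component $\wt\dd_\Gamma f_j=\sum_{v\in f_j}H(\tau_{f_j}(v))$ depends only on the combinatorial datum $(-1)^{r_v}e_ne_me_k^{-1}$ attached to each thread, and none of the moves I--IV alters the endpoint $v$ of a thread nor the cyclic adjacency of edges at $v$; hence $\wt\dd_\Gamma f_j$ is manifestly unchanged and the whole problem reduces to the three coefficients $\Phi_{a,b}(f):=\sum_{B\in\mathcal{B}_{a,b}(f)}H(B)$ governing $\wt\dd_\Gamma x,\wt\dd_\Gamma y,\wt\dd_\Gamma z$. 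Outside $D$ a binary sequence is unaffected by the move, and $H(B)$ factors as the product of its contributions inside and outside $D$; conditioning on the two values that $B$ takes on $\partial D$ therefore expresses $\Phi_{a,b}(f)$ through a $2\times2$ \emph{transfer matrix} $T_{uw}=\sum H(B|_D)$, the sum running over admissible local pieces entering $D$ with value $u$ and leaving with value $w$. It is then enough to check that $T$ is the same before and after each move.

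For the two tangencies I would compute $T$ on both sides directly. Before moves I or II the disk $D$ meets no strand, so $T$ is the identity: the value is carried across with weight $1$ and the off-diagonal entries vanish because nothing can switch. After move I the tine meets one thread $\tau$ in two points whose oriented intersection signs are opposite, $\sigma(p_1)=-\sigma(p_2)$, which is the defining feature of a tangency; since a thread crossing can only switch $0\to1$, the $0\to0$ and $1\to1$ entries remain $1$ while the $0\to1$ entry equals $(\sigma(p_1)+\sigma(p_2))H(\tau)=0$, recovering the identity. After move II the tine meets one edge $e$ in two points, each of which forces a switch, so only the diagonal survives and its weight is the product $H(B,q_1)H(B,q_2)$; by the tangency entry of the table in Figure \ref{fig:tineedgetable} these two contributions are reciprocal, $e^{\pm1}$ with matching signs, so the product is $1$ and $T$ is again the identity.

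For move IV the disk contains the center $c_f$ together with the tine's crossing of a thread $\tau$ that is rotated from just below to just above $c_f$. Here the forced switch $0\to1$ at $c_f$ does all the work: when the crossing lies below $c_f$ it cannot switch, for otherwise the value immediately below $c_f$ would be $1$ and the mandatory $0\to1$ jump at $c_f$ would be impossible, so the crossing stays and contributes $1$; when the crossing lies above $c_f$ it sits in the region of value $1$ and again cannot switch. In either configuration the unique admissible local piece runs $0\to1$ with weight $1$, so $T$ is unchanged.

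The main obstacle is move III, the passage of a tine across a trivalent vertex $v$, because this is the only move in which the edge contributions of Figure \ref{fig:tineedgetable} and the thread weight $H(\tau)=(-1)^{r_v}e_ne_me_k^{-1}$ must be reconciled simultaneously. I would model $D$ as the vertex together with its three edges and the (up to) three threads $\tau_{f}(v)$ emanating into the adjacent faces, and record how the tine trades the strands it meets on one side of $v$ for the complementary strands on the other. Then I would enumerate the admissible local binary sequences for each boundary condition — the edge crossings force switches, while the thread crossings are optional $0\to1$ jumps — and compute the two transfer matrices. Their equality is exactly what pins down the shape of the thread weight: the factor $(-1)^{r_v}$ absorbs the change in edge orientations registered as the tine moves past $v$, and the pattern $e_ne_me_k^{-1}$, two adjacent edges over the opposite one, makes the edge Laurent monomials telescope across the vertex. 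I expect the computation to split into a short finite list of cases indexed by $r_v\in\{0,1,2,3\}$ and by the boundary values, and I anticipate the delicate part to be the consistent bookkeeping of the signs $\sigma(p)$ and of the table entries, rather than any conceptual difficulty once the local model is fixed.
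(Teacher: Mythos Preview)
Your proposal is correct and follows essentially the same route as the paper: a local enumeration of admissible binary sequences for each pair of boundary values, with Moves I and II handled by exactly the cancellation/reciprocity you describe and Move III by an explicit case table (the paper tabulates the four $(a,b)$ entries on each side of the vertex and defers the sign check over all orientations $r_v$ to a separate appendix). Your transfer-matrix framing is simply a repackaging of that count, and your Move IV argument is in fact more explicit than the paper's, which leaves that case tacit.
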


\begin{proof}
For Move I in Figure \ref{fig:Tangencies}, where a tine and a thread interact, the count of binary sequences is as follows. On the local situation depicted on its left side we have $|\SB_{1,1}|=|\SB_{0,0}|=1$ given by the constant sequences, whereas $\SB_{1,0}=\SB_{0,1}=\emptyset$; on the right hand side the constant sequences persist and still $|\SB_{1,1}|=|\SB_{0,0}|=1$, and it is apparent that $\SB_{1,0} = \emptyset$. However, $|\SB_{0,1}|=2$ depending on which point of the thread we switch: in one of these sequences the intersection index $\sigma$ is positive and the other is negative. Hence, given that the other relevant data coincides since the thread is the same, these two binary sequences have opposite sign and their contributions cancel.

For Move II, the tangency between a tine and an edge $e$, the left hand side has $|\SB_{1,1}|=|\SB_{0,0}|=1$ given by the constant sequences and else $\SB_{1,0}=\SB_{0,1}=\emptyset$. On the right hand side, there is a unique sequence in each $\SB_{1,1}$ and $\SB_{0,0}$, which switches at both of the intersection points $q_1,q_2\in e$: these sequences are counted with the factor $H(B, q_1)\cdot H(B, q_2)=(\pm e^{\pm 1})(\pm e^{\mp1})=1$ and thus their contributions coincides with that of the constant functions. Certainly, $\SB_{1,0}=\SB_{0,1}=\emptyset$ also holds.

For Move III, where a tine crosses along a vertex, the counts of unsigned contributions read as follows:
\begin{center}
  \begin{tabular}{| l | l | l |}
    \hline
    Fig.~\ref{fig:VertexCrossing} & Left & Right \\ \hline
    $1\lr1$ & $e_1^{-1}e_3$ & $e_2^{-1}\cdot (e_1^{-1}e_2e_3)$ \\ \hline
    $1\lr0$ & $e_1^{-1}\cdot(e_2^{-1}e_1e_3)\cdot e^{-1}_3$ & $e_2^{-1}$ \\ \hline
    $0\lr1$ & $0$ & $e_2+(e_3^{-1}e_1e_2)\cdot e_2^{-1}\cdot(e_1^{-1}e_2e_3)$ \\ \hline
    $0\lr0$ & $e_1e_3^{-1}$ & $(e_3^{-1}e_1e_2)\cdot e_2^{-1}$\\
    \hline
  \end{tabular}
\end{center}
The counts of signed contributions are verified in the tables of Appendix \ref{app:signs}.
\end{proof}

\begin{proof}[Proof of Proposition \ref{prop:rake indep}:]
By Lemma \ref{lem:easy rake moves} it suffices to define a dg--algebra isomorphism

$$\p: (\wt\A_G, \wt\dd_{\Gamma_0})\lr(\wt\A_G, \wt\dd_{\Gamma_1}),$$

where the two rakes $R_0$ and $R_1$ differ by a single tine switch as depicted in Figure \ref{fig:TineSwitch}: the tines $\gamma^0_{f_m}$ and $\gamma^0_{f_n}$ switch resulting in two new parallel tines $\gamma^1_{f_n},\gamma^1_{f_m}$. Because $\gamma_{f_m}$ and $\gamma_{f_n}$ are parallel, we can consider a curve $\gamma_{m,n}$ which lies between the tines $\gamma_{f_m}$ and $\gamma_{f_n}$ and passes through $c_{f_m}$ and $c_{f_n}$. The curve $\gamma_{m,n}$ (depicted in Figure \ref{fig:ProofTineSwitch}) should be considered as a generalized tine which governs the transition of binary sequences when a tine switch occur. That is, the count of binary sequences along $\gamma_{m,n}$ will define $\p$. Notice that we have $E(\gamma_{m,n}) = E(\gamma_{f_m}) = E(\gamma_{f_n})$, and $\alpha(\gamma_{m,n}) = \alpha(\gamma_{f_m}) \cap \alpha(\gamma_{f_n})$.
% = \{p \in \alpha(\gamma_{f_a}): \tau_p \nsubseteq f_b\} = \{p \in \alpha(\gamma_{f_b}): \tau_p \nsubseteq f_b\}$.

\begin{figure}[h!]
\centering
  \includegraphics[scale=0.65]{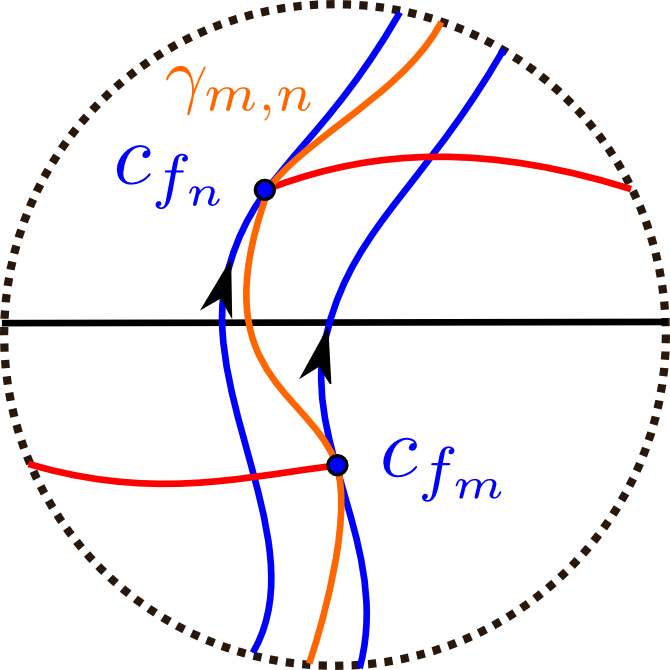}
  \caption{The curve $\gamma_{m,n}$, which accounts for the transition in a tine switch.}\label{fig:ProofTineSwitch}  
\end{figure}

Following Definition \ref{def:binary} a binary sequence $B$ along $\gamma_{m,n}$ is a binary sequence which changes value at all points of $E(\gamma_{m,n})$, changes its value from $0$ to $1$ at $c_{f_m}$ and $c_{f_n}$, and either remains constant or change its value from $0$ to $1$ at points of $\alpha(\gamma_{m,n})$. By considering the set of points $\alpha(B) \sse \alpha(\gamma_{m,n})$ where $B$ switches its values at a thread, we can define

$$H(B) := \prod_{q \in E(\gamma_{f_{m,n}})} H(B,q)\cdot\prod_{p \in \alpha(B)} \sigma(p) H(\tau_p).$$

Let $\mathcal{B}_{a,b}(\gamma_{m,n})$ be the set of all binary sequences along $\gamma_{m,n}$ with initial value $a$ and final value $b$. We define a map $\p: \wt\A_G\lr\wt\A_G$ which is the identity on all edges $e_i \in E$ and faces $f_j \in F$, and acts as follows on the degree-2 generators:
\begin{equation}\label{eqn:tine iso}
\p(x) = x + \left(\sum_{B \in \mathcal{B}_{1,1}(\gamma_{m,n})}H(B)\right)f_m f_n,\end{equation}
$$\p(y) = y + \left(\sum_{B \in \mathcal{B}_{1,0}(\gamma_{m,n})}H(B)\right)f_m f_n,$$
$$\p(z) = z + \left(\sum_{B \in \mathcal{B}_{0,0}(\gamma_{m,n})}H(B)\right)f_m f_n.$$
We claim that the map $\p$ commutes with the differentials: $\p \circ \wt\dd_{\Gamma_0} = \wt\dd_{\Gamma_1} \circ \p$, which we prove now.

First, the action of the two differentials $\wt\dd_{\Gamma_0}$, $\wt \dd_{\Gamma_1}$ coincides on all the faces $f_j \in F$ and $e_i \in E$ since their definitions are independent of the choice of rakes. On the faces denote $\wt\dd=\wt\dd_{\Gamma_0}=\wt\dd_{\Gamma_1}$. For the degree-2 generators $x,y,z\in(\wt \A_G)_2$, the condition $\p \circ \wt\dd_{\Gamma_0} = \wt\dd_{\Gamma_1} \circ \p$ is equivalent to the following conditions:
$$\wt\dd_{\Gamma_0} x - \wt \dd_{\Gamma_1} x = \left(\sum_{B \in \mathcal{B}_{1,1}(\gamma_{m,n})}H(B)\right)\left(\left(\wt\dd f_m\right) f_n - \left(\wt\dd f_n\right)f_m\right)$$
$$\wt\dd_{\Gamma_0} y - \wt \dd_{\Gamma_1} y = \left(\sum_{B \in \mathcal{B}_{1,0}(\gamma_{m,n})}H(B)\right)\left(\left(\wt\dd f_m\right) f_n - \left(\wt\dd f_n\right)f_m\right)$$
$$\wt\dd_{\Gamma_0} z - \wt \dd_{\Gamma_1} z = \left(\sum_{B \in \mathcal{B}_{0,0}(\gamma_{m,n})}H(B)\right)\left(\left(\wt\dd f_m\right) f_n - \left(\wt\dd f_n\right)f_m\right).$$
Let $w\in\{x,y,z\}$, the only terms which contribute to $\wt\dd_{\Gamma_0} w - \wt \dd_{\Gamma_1} w$ come from binary sequences along $\gamma^0_{f_m}$, $\gamma^1_{f_m}$, $\gamma^0_{f_n}$, and $\gamma^1_{f_n}$, since all other terms will appear equally in $\wt \dd_{\Gamma_0} w$ and $\wt \dd_{\Gamma_1} w$. Therefore, we can write $\wt\dd_{\Gamma_0} w - \wt \dd_{\Gamma_1} w = P^w_mf_m + P^w_nf_n$, and it remains to show
$$P^w_m = -\left(\sum_{B \in \mathcal{B}_{a,b}(\gamma_{m,n})}H(B)\right)\left(\wt\dd f_n\right),\quad P^w_n = \left(\sum_{B \in \mathcal{B}_{a,b}(\gamma_{m,n})}H(B)\right)\left(\wt\dd f_m\right)$$
where $(a,b) = (1,1)$, $(1,0)$, or $(0,0)$ according to $w = x, y$, or $z$, respectively.

Consider a binary sequence $B^0$ along $\gamma^0_{f_m}$. If it does not switch its value at a thread inside the face $f_n$, then it has a counterpart binary sequence $B^1$ along $\gamma^1_{f_m}$ obtained by switching in all the corresponding locations as $B^0$; hence these two contributions $H(B^0) = H(B^1)$ cancel in $\wt\dd_{\Gamma_0} w - \wt \dd_{\Gamma_1} w$. In consequence, the only binary sequences $B$ which contribute to $P_m^w$ are those sequences along $\gamma^0_{f_m}$ or $\gamma^1_{f_m}$ which necessarily switch their value at a thread inside $f_n$. Thus $B$ defines a binary sequence along $\gamma_{m,n}$: if $B$ is a binary sequence along $\gamma^0_{f_m}$ which switches value at a thread near to $f_n$, we can use the same values to define a sequence along $\gamma_{m,n}$ which switches value at $c_{f_n}$.

The data lost in this association accounts for \emph{which} thread inside $f_n$ is the one in which the binary sequence $B$ switches, which is counted by the second factor $\wt\dd f_n$ in the expression for $P^w_m$. It is readily seen that each choice of thread appears once in either $\wt\dd_{\Gamma_0} w$ or $\wt\dd_{\Gamma_1} w$, but not both, and in either case the signs give the correct contributions. This proves the expansion for $P^w_m$, and the proof for $P^w_n$ is identical.
\end{proof}

Proposition \ref{prop:rake indep} shows the independence of the dg--algebra isomorphism type from the choice of rake. Another element of a garden is the choice of a web. Clearly, in each face the choice of threads is essentially unique, since each face is contractible. The exception is the choice of threads at infinity, which we address here.

\begin{prop}\label{prop:anchor indep}
Let $G$ be a graph and $\Gamma_0=\{\{c_f\},\{\gamma_f\},\widehat W^0\}$, $\Gamma_1=\{\{c_f\},\{\gamma_f\},\widehat W^1\}$ two gardens such that $W^0=W^1$. Then there exists a dg--algebra isomorphism
$$\p:(\wt\A_G, \wt\dd_{\Gamma_0})\lr(\wt\A_G, \wt\dd_{\Gamma_1}),$$
which restricts to the identity on $\Lambda_G \sse \wt\A_G$.
\end{prop}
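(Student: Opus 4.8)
The plan is to follow the template of the proof of Proposition \ref{prop:rake indep}: reduce the difference between the two enlarged webs to a short list of elementary moves, dispatch the cosmetic ones with the identity, and absorb the one genuinely nontrivial move into a conjugating graded automorphism built from binary-sequence counts. First I would observe that since $W^0=W^1$, the face differential is literally unchanged, $\wt\dd_{\Gamma_0}f_j=\sum_{v\in f_j}H(\tau_f(v))=\wt\dd_{\Gamma_1}f_j$, because it only ever sees the web arcs of the bounded faces and never the threads at infinity; likewise $\wt\dd e=0$ for $e\in E$. Hence the only possible discrepancy lives in $\wt\dd x,\wt\dd y,\wt\dd z$, and it can only arise from the crossings of the tines with threads at infinity, i.e.\ from the points of $\alpha(\gamma_f)$ lying on a thread in $\wh W\sm W$.

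Next I would record an elementary-move decomposition analogous to Lemma \ref{lem:rake moves}: keeping $W$ and all endpoints on $V$ fixed, any two systems of threads at infinity $\wh W^0\sm W$ and $\wh W^1\sm W$ are related by a finite sequence of (i) plane isotopies, (ii) tangencies of a thread at infinity with a tine, and (iii) a reordering move in which a thread at infinity is rerouted past another, changing its crossing pattern with the tines by a non-cancelling pair. Moves (i) and (ii) leave $\wt\dd_\Gamma$ unchanged: (i) trivially, and (ii) because a thread--tine tangency is exactly Move I of Figure \ref{fig:Tangencies}, whose invariance is established in Lemma \ref{lem:easy rake moves}, the two binary sequences switching at the two new crossings sharing the same $H(\tau_p)$ but carrying opposite signs $\sigma(p)$, so that they cancel. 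Across (i) and (ii) the identity is already a dg-isomorphism fixing $\Lambda_G$.

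For the essential move (iii), which exists precisely because the exterior region of $G$ is not contractible, as flagged before the statement, I would build a conjugating automorphism $\p$ exactly as in \eqref{eqn:tine iso}: $\p$ is the identity on all $e_i$ and $f_j$ and sends each degree-$2$ generator $w\in\{x,y,z\}$ to $w$ plus a correction term $\big(\sum_B H(B)\big)\cdot(\text{product of face generators})$, where $B$ ranges over the binary sequences along a bridging curve interpolating between the two threads at infinity being reordered. The verification that $\p\circ\wt\dd_{\Gamma_0}=\wt\dd_{\Gamma_1}\circ\p$ then reduces, as in the proof of Proposition \ref{prop:rake indep}, to pairing off the binary sequences on the affected tines that agree away from the rerouted thread and matching the survivors, namely those forced to switch at the new crossings, bijectively with the binary sequences along the bridging curve, with signs tracked by the oriented-intersection rule of Figure \ref{fig:tinethread}. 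Composing these isomorphisms over the sequence of moves produces the desired $\p$.

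The hard part will be step (iii): pinning down the correct bridging curve and the precise product of face generators in the correction term, which is the analog of the factor $f_mf_n$ but here must be read off from the faces the rerouted thread sweeps across rather than from a pair of switched tines, and then checking on the nose, signs included, that $\wt\dd_{\Gamma_0}w-\wt\dd_{\Gamma_1}w$ matches this correction. Establishing the move decomposition underlying (ii)--(iii) for the annular exterior region is also more delicate than the rake case and would need to be stated carefully, although, like Lemma \ref{lem:rake moves}, it is purely combinatorial.
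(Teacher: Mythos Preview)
Your reduction through moves (i) and (ii) is fine and matches the paper's implicit reasoning, but step (iii) misidentifies both the elementary move and the shape of the conjugating automorphism. You are pattern-matching to the tine-switch correction \eqref{eqn:tine iso}, where the added term is $H(B)\,f_mf_n$ because a generalized tine $\gamma_{m,n}$ passes through two centers. A thread at infinity carries no center, so there is no ``bridging curve'' whose binary sequences would naturally produce a product of face generators; your proposed correction has the wrong form. The actual effect of rerouting a thread at infinity is to change where along a tine a $0\to 1$ switch can occur near the boundary, which amounts to shifting the initial/final value of a binary sequence: a $\mathcal B_{1,1}$ sequence that switches at a top-crossing thread is, after removing that switch, a $\mathcal B_{1,0}$ sequence, and similarly for $\mathcal B_{0,0}$. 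Hence the correction mixes $x,y,z$ among themselves, not $x,y,z$ with face products.

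The paper exploits this by first invoking Proposition~\ref{prop:rake indep} to move the rake so that every exterior vertex lies to the right of every tine. In that position all threads at infinity cross each tine only outside the graph, and any two choices differ by a finite sequence of a single explicit ``bottom--top'' move on one thread. For that move the conjugating isomorphism is written down directly as
\[
\p(x)=x+e_me_ne_k^{-1}\,y,\qquad \p(z)=z+e_me_ne_k^{-1}\,y,\qquad \p(y)=y,
\]
with $e_m,e_n,e_k$ the edges at the relevant exterior vertex, and one checks $\p\circ\wt\dd_{\Gamma_0}=\wt\dd_{\Gamma_1}\circ\p$ by the boundary-condition shift just described. So the missing ingredient in your proposal is twofold: the preliminary rake normalization that makes the move local and simple, and the recognition that the correction is a $\Lambda_G$-linear mixing of the degree-$2$ generators rather than an $f_mf_n$-type term.
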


\begin{proof}
Apply Proposition \ref{prop:rake indep} to ensure the rake $\{\gamma_f\}$ is such that all the exterior vertices of $G$ lie to the right of every tine in the rake. Thus we may assume that $\widehat W^0\setminus W^0$ and $\widehat W^1\setminus W^1$ differ in a bottom--top move as in Figure \ref{fig:ProofAnchor}, since $\widehat W^0$ and $\widehat W^1$ must differ by a finite sequence of such moves.

\begin{figure}[h!]
\centering
  \includegraphics[scale=0.5]{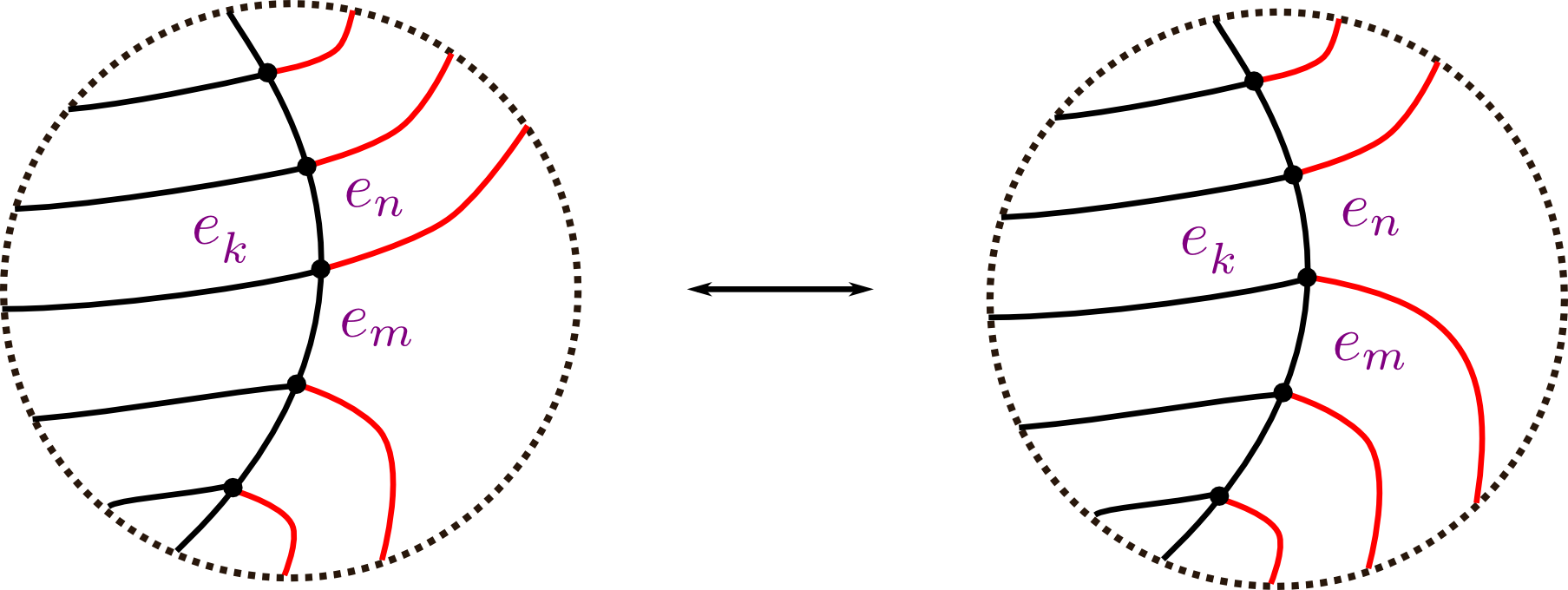}
  \caption{Bottom--top move for the set of threads at infinity.}\label{fig:ProofAnchor}  
\end{figure}

Consider the algebra isomorphism $\p: \wt \A_G \to \wt \A_G$ defined as the identity on all faces and edges and acting in the degree-2 generators as
\begin{equation}\label{eqn:anchor iso}
\p(x) = x+ e_me_ne^{-1}_k y,\quad\p(z) = z + e_me_ne^{-1}_k y,\quad \p(y) = y.\end{equation}

It is readily verified that $\p\circ\wt\dd_{\Gamma_0} = \wt\dd_{\Gamma_1} \circ \p$.
\end{proof}

Let us now use Propositions \ref{prop:rake indep} and \ref{prop:anchor indep} to prove the following result.

\begin{thm}\label{thm:garden indep}
Let $G$ be a planar cubic graph equipped with two gardens $\Gamma_0,\Gamma_1$. Then there exists a graded algebra isomorphism
$\p:(\wt\A_G, \wt\dd_{\Gamma_0})\lr(\wt\A_G, \wt\dd_{\Gamma_1})$
such that $\p\circ\wt\dd_{\Gamma_0} = \wt\dd_{\Gamma_1} \circ \p$.
\end{thm}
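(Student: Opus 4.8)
The plan is to reduce the comparison of two arbitrary gardens to a finite chain of elementary modifications, each affecting only one of the four decorations making up a garden---centering, web, rake, and orientation---and to realize each such modification by a graded algebra isomorphism of $\wt\A_G$ intertwining the two relevant differentials. Since the composite of graded isomorphisms intertwining consecutive differentials is again such an isomorphism, it suffices to treat each decoration in isolation. Concretely, I would first connect $\Gamma_0$ to $\Gamma_1$ through intermediate gardens $\Gamma_0=\Sigma_0,\Sigma_1,\ldots,\Sigma_N=\Gamma_1$ in which $\Sigma_k$ and $\Sigma_{k+1}$ are equal except in a single coordinate of the tuple, and then further decompose each single--coordinate change into the elementary moves already isolated: the rake moves of Lemma \ref{lem:rake moves}, the bottom--top moves of Proposition \ref{prop:anchor indep}, an ambient isotopy, or the reversal of one edge.

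For the centering and the interior web the argument is immediate. Each face is contractible and, by the convention that pairs $(G,\Gamma)$ are taken up to smooth isotopy respecting the combinatorial incidences, any two centerings together with their interior webs are related by an ambient isotopy of $(-1,1)\x(-1,1)$ fixing $G$ and all combinatorial data. Such an isotopy leaves every weight $H(B,q)$, $H(\tau)$ and every sign $\sigma(p)$ unchanged, so the identity map of $\wt\A_G$ intertwines the two differentials. The change of rake is exactly Proposition \ref{prop:rake indep}, and the change of the threads at infinity is exactly Proposition \ref{prop:anchor indep}; both produce graded isomorphisms, and since each can be arranged to fix $\Lambda_G\sse\wt\A_G$, they compose with the remaining maps without obstruction.

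The genuinely delicate case, and the one I expect to be the main obstacle, is the change of orientation, which I would reduce to reversing a single edge $e_i$ at a time (two orientations differ by reversing a subset of $E$). Reversing $e_i$ transforms the data defining $\wt\dd_\Gamma$ in two incompatible ways. At every tine--edge crossing $q\in e_i$ the weight $H(B,q)=\pm e_i^{\pm1}$ is sent to its inverse exponent, which on its own would be corrected by the substitution $e_i\mapsto e_i^{-1}$; but at each thread emanating from either endpoint $v$ of $e_i$ the integer $r_v$ changes parity, so the thread weight $H(\tau)=(-1)^{r_v}e_ne_me_k^{-1}$ keeps its monomial and merely flips sign. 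Because an algebra homomorphism must act uniformly on $e_i$, no single monomial substitution reconciles both behaviours; in particular the selective sign flips appearing in $\wt\dd_\Gamma f_j$ for the faces meeting the two endpoints of $e_i$ cannot be produced by rescaling the face generators $f_j$.

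To overcome this I would construct an explicit graded automorphism $\p$ tailored to the single edge reversal, in the spirit of the correction--term maps \eqref{eqn:tine iso} and \eqref{eqn:anchor iso}: $\p$ inverts $e_i$ on the tine--edge contributions, carries compensating correction terms on the degree--two generators $x,y,z$ supported on the two faces adjacent to $e_i$, and incorporates the bookkeeping needed to reproduce the selective thread--sign changes on the faces through the endpoints of $e_i$. The verification that $\p\circ\wt\dd_{\Gamma_0}=\wt\dd_{\Gamma_1}\circ\p$ is then carried out generator by generator, with the face generators being the crux, since that is precisely where the sign flips live. Granting this single--edge statement, composing over the edges on which $\Gamma_0$ and $\Gamma_1$ disagree together with the rake, web, and centering isomorphisms above yields the desired graded algebra isomorphism $\p:(\wt\A_G,\wt\dd_{\Gamma_0})\lr(\wt\A_G,\wt\dd_{\Gamma_1})$ intertwining the differentials, which proves the theorem.
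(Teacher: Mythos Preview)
Your overall strategy---reduce to elementary modifications of each of the four decorations, invoke Propositions~\ref{prop:rake indep} and~\ref{prop:anchor indep} for the rake and the threads at infinity, and treat the centering/interior web by ambient isotopy---is exactly the paper's approach, and those parts are fine.

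The orientation case, however, contains a genuine error. You assert that reversing the orientation of $e_i$ sends the tine--edge weight $H(B,q)=\pm e_i^{\pm1}$ to its inverse exponent. This is a misreading of the rule in Figure~\ref{fig:tineedgetable}: the \emph{exponent} of $e_i$ in $H(B,q)$ is determined solely by whether the binary sequence switches $0\to1$ or $1\to0$ at $q$ (cf.\ the proof of Proposition~\ref{prop:action}, where this is stated explicitly), and is therefore independent of the edge orientation. What the edge orientation controls is only the \emph{sign} in front. So reversing the orientation of $e_i$ replaces $H(B,q)$ by $-H(B,q)$, exactly as it replaces each thread weight $H(\tau)$ at an endpoint of $e_i$ by $-H(\tau)$. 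There is no incompatibility: both effects are uniformly realized by the ring automorphism $e_i\mapsto -e_i$ (identity on all other generators), since every affected weight is a monomial containing $e_i$ to the power $\pm1$.

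Consequently, the ``delicate'' construction you sketch---an automorphism with $e_i\mapsto e_i^{-1}$ plus correction terms on $x,y,z$---is unnecessary and, as you yourself note, would not straightforwardly reproduce the sign flips on $\wt\dd_\Gamma f_j$. The paper's actual isomorphism for a single edge reversal is simply the algebra map induced by $e_i\mapsto -e_i$, which is trivially graded and intertwines the two differentials term by term. Once you correct this, your proof collapses to the paper's.
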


\begin{proof}
Choosing a different centering and interior threads yields to combinatorially equivalent configurations, and Propositions \ref{prop:rake indep} and \ref{prop:anchor indep} show that the choices of different rakes and threads at infinity yield isomorphic algebras with an isomorphism commuting with the $\wt \dd$--operators.

It remains to show that choosing a different orientation induces an isomorphism. It suffices to show this for two orientations that differ in exactly one edge $e_i \in E$, and let $\wt\dd_0$ and $\wt\dd_1$ be the chain differentials corresponding to the two choices of orientation. The ring isomorphism $\phi: \Lambda_G \lr \Lambda_G$ which takes $e_i$ to $-e_i$ and is the identity on all other generators induces an algebra isomorphism $\p : \wt\A_G \lr \wt\A_G$, and it is easily checked that it satisfies $\p\circ\wt\dd_{\Gamma_0} = \wt\dd_{\Gamma_1} \circ \p$.
\end{proof}

\section{Proof of $\wt\dd^2 = 0$} \label{sec:dsquared}

Let $G=(V,E,F)$ be a graph equipped with a garden $\Gamma$, this section is devoted to showing that $(\wt\A_G,\wt\dd_\Gamma)$ is a dg--algebra, that is the identity $\wt\dd_\Gamma^2=0$ is satisfied.

\begin{thm}\label{thm:dsquared}
Let $(G,\Gamma)$ be a decorated graph equipped with a garden $\Gamma$. Then $\wt\dd_\Gamma^2=0$.
\end{thm}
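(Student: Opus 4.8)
The plan is to first reduce $\wt\dd_\Gamma^2=0$ to the generators. The operator $\wt\dd_\Gamma$ is a graded derivation of odd degree $-1$, and a short Koszul-sign computation shows that the square of any odd derivation is again a derivation: the cross terms $(-1)^{|a|-1}+(-1)^{|a|}$ cancel, leaving $\wt\dd_\Gamma^2(ab)=\wt\dd_\Gamma^2(a)\,b+a\,\wt\dd_\Gamma^2(b)$. Hence $\wt\dd_\Gamma^2$ is determined by its values on the generators $e_i,f_j,x,y,z$ and on the central subalgebra $\Lambda_G$. On $\Lambda_G$ it vanishes because $\wt\dd_\Gamma|_{\Lambda_G}=0$; on edges $\wt\dd_\Gamma^2 e_i=0$ trivially; and on a face $\wt\dd_\Gamma f_j=\sum_{v\in f_j}H(\tau_{f_j}(v))\in\Lambda_G$, so $\wt\dd_\Gamma^2 f_j=0$. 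Everything therefore reduces to the three degree-$2$ generators.

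For $w\in\{x,y,z\}$, writing $(a,b)=(1,1),(1,0),(0,0)$ respectively, the coefficients $\sum_B H(B)$ lie in the $\wt\dd_\Gamma$-closed center $\Lambda_G$, so Leibniz gives
$$\wt\dd_\Gamma^2 w=\sum_{j}\Big(\sum_{B\in\mathcal{B}_{a,b}(f_j)}H(B)\Big)\wt\dd_\Gamma f_j=\sum_{j}\Big(\sum_{B\in\mathcal{B}_{a,b}(f_j)}H(B)\Big)\Big(\sum_{v\in f_j}H(\tau_{f_j}(v))\Big)\in\Lambda_G.$$
Since $\Lambda_G$ is an integral domain, the per-face products are in general nonzero, so the identity cannot be proved face by face: the vanishing must come from cancellation among contributions attached to different faces. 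Before exhibiting it I would invoke Theorem \ref{thm:garden indep}, which lets me replace $\Gamma$ by any convenient garden (the property $\wt\dd^2=0$ is transported by the $\wt\dd$-commuting isomorphisms constructed there). I would choose the tines so that each one meets the web of every face other than its own in a standard separated position, confining all nontrivial interactions to neighborhoods of the vertices.

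The core of the proof is a sign-reversing involution on the monomials of the displayed sum. I read each monomial as a decorated configuration: a face $f_j$, a binary sequence $B$ along $\gamma_{f_j}$ of the prescribed boundary type, and a marked thread $\tau_{f_j}(v)$ contributed by $\wt\dd_\Gamma f_j$, weighted by the signed Laurent monomial $H(B)\,H(\tau_{f_j}(v))$. A transfer-matrix bookkeeping along the tine clarifies the structure: the forced switch $0\to1$ at the center $c_{f_j}$ is a rank-one transition, so the count $\sum_B H(B)$ factors into a contribution below and a contribution above the center, and the marked thread of $\wt\dd_\Gamma f_j$ is anchored exactly at that center. The involution I would construct alters a configuration only near the marked vertex $v$, interchanging the two ways a given crossing can be recorded — either by the marked thread of $\wt\dd_\Gamma f_j$ or by a switch of the binary sequence at a neighboring thread or edge — thereby pairing each configuration with a distinct one carrying the identical monomial. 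The reason these monomials coincide is that the three threads meeting at a vertex $v$ carry the cyclically related weights $(-1)^{r_v}e_1e_2e_3^{-1}$, $(-1)^{r_v}e_2e_3e_1^{-1}$, $(-1)^{r_v}e_3e_1e_2^{-1}$, and the tine's edge crossings at $v$ supply exactly the missing factors.

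The main obstacle is the sign bookkeeping. The sign of each weight combines the oriented-intersection signs $\sigma(p)$ of the thread crossings, the vertex factors $(-1)^{r_v}$, and the edge signs $\pm e^{\pm1}$ of Figure \ref{fig:tineedgetable}; proving that paired configurations always differ by exactly one sign is finite but delicate. As with Lemma \ref{lem:easy rake moves}, I would organize this into a bounded list of local pictures at a vertex, indexed by the orientations of the three incident edges and the local values of $B$, and record the cancellations in sign tables of the form used in Appendix \ref{app:signs}. Once every local case exhibits the required sign reversal, the involution is fixed-point-free, each monomial of $\wt\dd_\Gamma^2 w$ cancels against its partner, and $(\wt\A_G,\wt\dd_\Gamma)$ is a differential graded algebra.
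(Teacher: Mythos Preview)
Your reduction to the degree-$2$ generators is correct and matches the paper, as does the recognition that one needs a sign-reversing involution on the set of pairs $(B,\tau_f(v))$ with $B\in\mathcal{B}_{a,b}(f)$ and $v\in f$, weighted by $H(B)\,H(\tau_f(v))$. The gap is in the involution you sketch: you propose to alter a configuration only near the marked vertex $v$, trading the thread mark for a local switch of the binary sequence. This cannot work, because the cancellation in $\wt\dd_\Gamma^2 w$ does not respect the vertex. A direct check on the Clifford-torus graph $G_2$ of Section~\ref{ssec:chek} already shows this: if one sums all terms of $\wt\dd_\Gamma^2 y$ attached to the interior vertex where edges $e_2,e_3,e_5$ meet (that is, the three pairs $(f_j,B,\text{this vertex})$ for $j=1,2,3$), the result
\[
e_1^{-2}e_2e_3e_5^{-1}\;-\;e_1^{-1}e_2e_6^{-1}\;+\;e_4^{-1}e_5e_6^{-1}\;+\;e_2^{-1}e_3e_4^{-2}e_5
\]
is nonzero in $\Lambda_G$. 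So there is no fixed-point-free sign-reversing involution on the vertex-fibres, no matter how conveniently the garden is chosen; the partner of a configuration at one vertex generically lives at a different vertex and a different face.

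What the paper does is essentially non-local. It embeds the set of tines into a foliation $\{\SL_t\}$ of the square and extends the notion of binary sequence to all regular leaves. Starting from $(B,\tau_f(v))$, it first pushes $B$ onto the adjacent regular leaf, replacing the forced switch at $c_f$ by a switch on the thread $\tau_f(v)$ near $c_f$; it then transports this binary sequence leaf by leaf across the foliation, handling each critical leaf (edge tangency, thread tangency, vertex, boundary, or another tine) by an explicit local rule (Cases A--E). The process terminates the first time the moving sequence reaches some tine $\gamma_{f'}$ while switching on a thread near $c_{f'}$, and that determines the partner $(B',\tau_{f'}(v'))$. The sign flips exactly once along the way, and the construction is manifestly reversible. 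Your transfer-matrix intuition for the rank-one transition at the center is the correct germ of the first step ($B_0\mapsto B_1$), but the argument cannot be closed locally: one must let the configuration travel through the foliation until it lands on another tine.
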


\begin{proof}
It suffices to prove $\wt\dd^2_\Gamma w=0$ for $w\in \{x,y,z\}$, since $\wt\dd_\Gamma = 0$ on the coefficient ring $\Lambda_G = (\wt\A_G)_0$. Each element $\wt\dd_\Gamma w\in(\wt\A_G)_1$ is a finite sum of terms of the form $H(B) f$, where $B$ is a binary sequence along $\gamma_f$. Thus the terms of $\wt\dd^2_\Gamma w$ are of the form $H(B)\cdot H(\tau_f(v))$, where $B$ is a binary sequence along $\gamma_f$ and $\tau_f(v)$ is the thread inside $f$ for some $v\in f$. This is depicted in Figure \ref{fig:TermDel}.

\begin{figure}[h!]
\centering
  \includegraphics[scale=0.65]{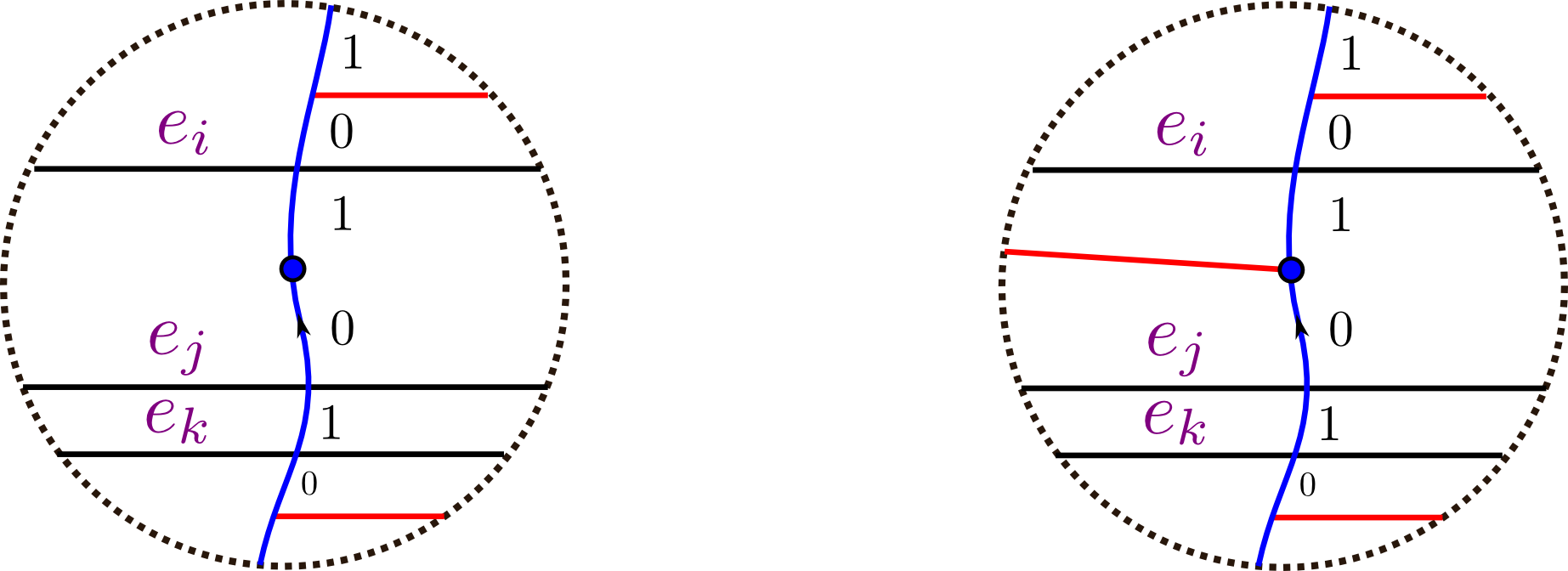}
  \caption{Part of a binary sequence defining a term of $\wt\dd_\Gamma w$, on the left, and the data defining a term of $\wt\dd^2_\Gamma w$, on the right. Both depicted near the center of the tine.}\label{fig:TermDel}
\end{figure}

First, consider a foliation $\SL=\{\SL_t\}$ of the square $[-1,1]\times[-1,1]$ such that:

\begin{itemize}
 \item[1.] The space of leaves of $\SL$ is smoothly parametrized by $t\in[-1,1]$, and each leaf $\SL_t$ is an embedded path with respective endpoints on $[-1, 1] \x \{-1\}$ and $[-1,1] \x \{1\}$. In addition, $\SL_{-1}=\{-1\}\times[-1,1]$ and $\SL_{1}=\{1\}\times[-1,1]$, and for all $f\in F$, the tines $\gamma_f$ are leaves $\SL_{t_f}$ of $\SL$.\\
 \item[2.] For all but finitely many $t \in [-1, 1]$, $\SL_t\cap \Int(\widehat W)$ and $\SL_t\cap E$ consist of finite sets of transverse intersection points. At these finitely many exceptional $t$, the finitely many intersection points in $\SL_t\cap \Int(\widehat W)$ and $\SL_t\cap E$ are also allowed to be non-oscillatory tangencies.
\end{itemize}

There exist foliations satisfying the first condition and we can construct a foliation satisfying the second condition by a $C^\infty$-small perturbation.

Now, a leaf $\SL_t$ of the foliation $\SL$ is said to be regular if $t \in (-1, 1)$, $\SL_t\neq\gamma_f$ for any $f\in F$, $\SL_t\cap V=\emptyset$, and the intersection points in the sets $\a(\SL_t) := \SL_t\cap \Int(\widehat W)$ and $E(\SL_t) := \SL_t\cap E$ are transverse. A critical leaf $\SL_t$ is by definition a leaf which is not regular, and the set of critical leaves is finite.

\begin{remark}
By a $C^\infty$-small perturbation, we assume that each critical leaf is not regular for a unique reason: it either contains a single vertex, has a unique tangency with an edge, has a unique tangency with a thread, or is equal to a tine.\hfill$\Box$
\end{remark}

Let $n(\SL)\in\N$ be the number critical leaves, and choose a set $\{t_1, \ldots, t_{n(\SL)+1}\} \sse (-1, 1)$ such that each $\SL_{t_l}$ is regular and lies between the $l$th and $(l+1)$st critical leaves; these leaves $\SL_{t_l}$ are referred to as the standard leaves and denoted by $\SL_l$ for $1\leq l\leq n(\SL)+1$.

Consider the set $\mathcal{B}^{\op{reg}}_{a,b}(\SL)$ of binary sequences along all standard leaves whose initial and final values are $a$ and $b$ respectively, and the set $\mathcal{B}_{a,b}^{\op{*}}$ of pairs $(B, \tau_f(v))$ where $B \in \mathcal{B}_{a,b}(f)$ and $\tau_f(v)$ is a thread from $c_f$; let us denote $\mathcal{B}_{a,b}(\SL) = \mathcal{B}^{\op{reg}}_{a,b}(\SL) \cup \mathcal{B}_{a,b}^{\op{*}}$.

In order to prove $\wt\dd_\Gamma^2 = 0$, we construct an involution

$$\Psi:\mathcal{B}^{\op{*}}_{a,b} \lr \mathcal{B}^{\op{*}}_{a,b},\qquad (a,b)\in\{(1,1),(1,0),(0,0)\},$$

such that $H(\Psi(B, \tau_f(v))) = - H(B)\cdot H(\tau_f(v))$; these involutions are defined as follows.

For each $B_0 = (B, \tau_f(v)) \in \mathcal{B}^{\op{*}}_{a,b}$ we construct a finite sequence $B_k \in \mathcal{B}_{a,b}(\SL)$, $0\leq k\leq N=N(B_0)$ with the following properties:

\begin{itemize}
\item[-] $B_N \in \mathcal{B}^{\op{*}}_{a,b}$, and $B_k \in \mathcal{B}^{\op{reg}}_{a,b}(\SL)$ for $1\leq k\leq N-1$,
\vspace{0.1cm}
\item[-] $H(B_0) = -H(B_N)$ and $H(B_k) = \pm H(B_{k'})$ for all $k, k'$,
\vspace{0.1cm}
\item[-] The binary sequence $B_k$ only depends on $B_{k-1}$ and the sign of $H(B_0)$,
\vspace{0.1cm}
\item[-] The sequence $\{B'_k\}$ defined by the initial condition $B'_0 = B_N$ is given by $B'_k = B_{N-k}$.
\end{itemize}

The involution $\Psi:\mathcal{B}^{\op{*}}_{a,b} \lr \mathcal{B}^{\op{*}}_{a,b}$ is defined as $\Psi(B_0) = B_N$ which proves $\wt\dd^2_\Gamma=0$. Let us construct the sequence $\{B_k\}$ from any initial condition $(B, \tau_f(v))\in\mathcal{B}^{\op{*}}_{a,b}$.

Consider the element $B_0 = (B, \tau_f(v))$ and let $\SL_l$ be the standard leaf which is adjacent to the tine $\gamma_f$ and lies to its right or its left according to whether the thread $\tau_f(v)$ points to the right or to the left of the tine $\gamma_f$. The first element $B_1$ in the sequence is defined to be the binary sequence along $\SL_l$ which switches its value at the same edges and threads as $B$, except that it additionally switches from $0$ to $1$ at $p \in \tau_f(v)$ instead of at the center $c_f$. Notice that the contributions $H(B_1) = \pm H(B_0)\cdot H(\tau_f(v))$ coincide up to sign since the same factors appear in both expressions with the exception of the contribution of $\sigma(p)$ combing from the value switch at $q$; and thus, $H(B_1) = H(B_0)$ if $\tau_f(v)$ lies to the right of the tine $\gamma_f$, and $H(B_1) = - H(B_0)$ if it lies to its left. This defines the binary sequence $B_1\in\mathcal{B}^{\op{reg}}_{a,b}$.

Suppose that the binary sequence $B_n\in\mathcal{B}^{\op{reg}}_{a,b}$ is defined, $B_n$ is supported along the standard leaf $\SL_j$ and $H(B_n) = \sigma_n H(B_0)$ for the correct sign $\sigma_n \in \{-1, 1\}$. Let $\SL^{\op{crit}}$ be the critical leaf lying to the right or to the left of the regular leaf $\SL_j$ according to whether $\sigma_n=1$ or $-1$ respectively. The binary sequence $B_{n+1}$ is now defined depending on the type of singularity presented by the leaf $\SL^{\op{crit}}$.

{\bf Case A:} The singular leaf $\SL^{\op{crit}}$ is tangent to an edge $e \in E$.

In this case, the regular leaf $\SL_{j+\sigma_n}$ differs from $\SL_j$ only in two additional intersections with the edge $e$, as depicted in Figure \ref{fig:d2EdgeCrossing}. Then $B_{n+1}$ is the unique binary sequence along $\SL_{j+\sigma_n}$ such that outside of the neighborhood where the tangency occurs it satisfies $B_{n+1} = B_n$. The contributions might only different in the two additional intersections, but since these are consecutive they contribute the factor of $(\pm e^{\pm 1})(\pm e^{\mp 1})=1$ to either $H(B_n)$ or $H(B_{n+1})$, and consequently $H(B_{n+1}) = H(B_n)$.

\begin{figure}[h!]
\centering
  \includegraphics[scale=0.75]{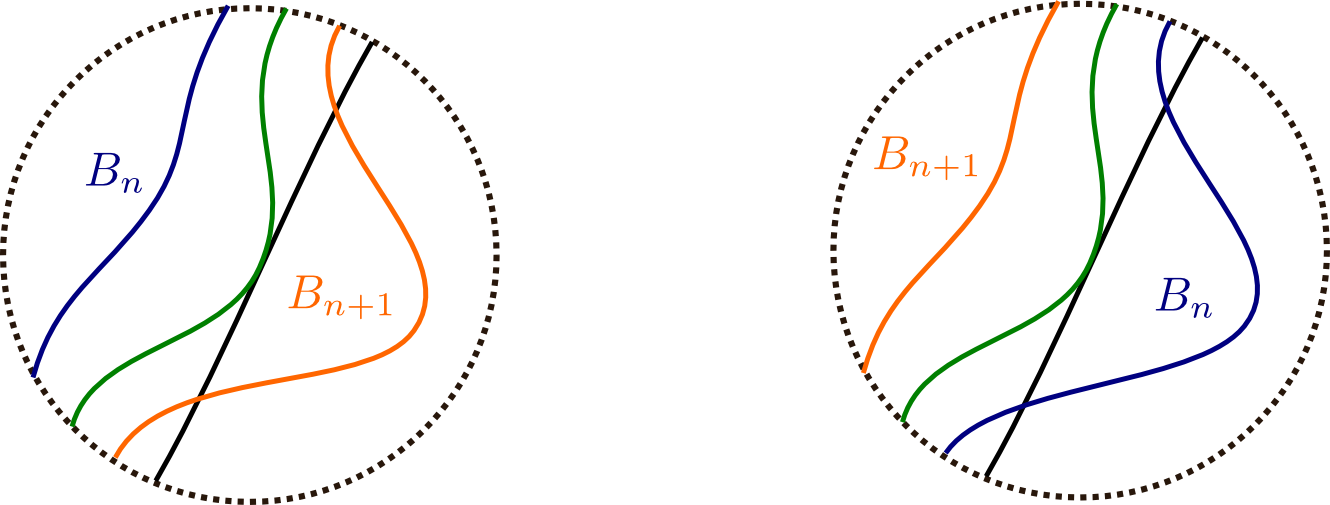}
  \caption{The sequence $\{B_n\}$ across an edge tangency.}\label{fig:d2EdgeCrossing}
\end{figure}

{\bf Case B:} The singular leaf $\SL^{\op{crit}}$ is tangent to a thread $\tau$.

The regular leaf $\SL_{j+\sigma_n}$ differs from $\SL_j$ in two additional intersections $q_1$ and $q_2$ with the thread $\tau$, which is depicted in Figure \ref{fig:d2ThreadCrossing}. First, in case $q_1,q_2$ lie on $\SL_{j+\sigma_n}$, we define $B_{n+1}$ to be the binary sequence along $\SL_{j+\sigma_n}$ which does not change value at $q_1$ and $q_2$, and is equal to $B_n$ outside of a small neighborhood of the tangency. Second, in case $q_1$ and $q_2$ lie on $\SL_j$ but the binary sequence $B_n$ happens to be constant at those points, we define $B_{n+1}$ to be the binary sequence along $\SL_{j+\sigma_n}$ equal to $B_n$; note that in these two cases $H(B_{n+1}) = H(B_n)$.

Third, in case the binary sequence $B_n$ switches its value from $0$ to $1$ at either $q_1$ or $q_2$, for definiteness let us assume $q_1$, we define $B_{n+1}$ to be the binary sequence along $\SL_j$ which is equal to $B_n$ except that it is constant at the point $q_1$ and switches value at $q_2$. Note that in this case, the equality $\sigma(q_1) = -\sigma(q_2)$ implies $H(B_{n+1})=-H(B_n)$.

\begin{figure}[h!]
\centering
  \includegraphics[scale=0.75]{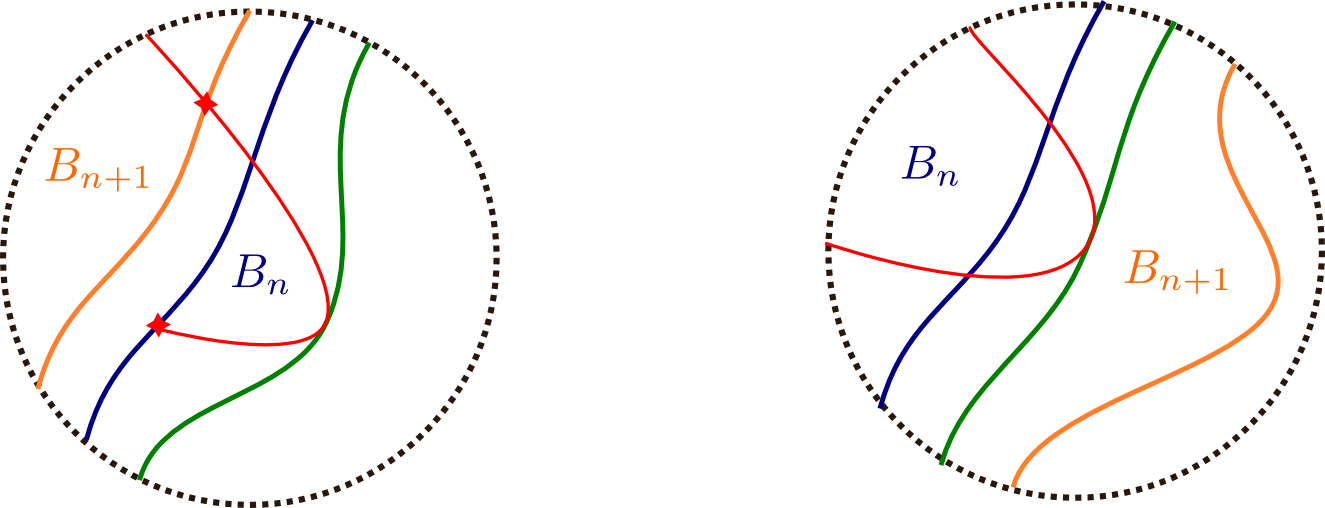}
  \caption{The sequence $\{B_n\}$ at a tine-thread singularity.}\label{fig:d2ThreadCrossing}
\end{figure}

{\bf Case C:} The singular leaf $\SL^{\op{crit}}$ contains a vertex.

In the three cases where near the vertex the initial and final values of the binary sequence $B_n$ are $1 \lr 1$, $1 \lr 0$, or $0 \lr 0$, there is a unique binary sequence $B_{n+1}$ defined along $\SL_{j+\sigma_n}$ which equals $B_n$ outside of this neighborhood: in these cases $H(B_{n+1}) = H(B_n)$, as depicted in Figure \ref{fig:d2VertexCrossing}. In the fourth case, where the boundary conditions are $0 \lr 1$, there are two binary sequences along $\SL_j$ which are equal outside of the local neighborhood, and these have opposite signs. In that scenario we define the binary sequence $B_{n+1}$ to be the unique sequence along $\SL_j$ distinct from $B_n$ with contribution $H(B_{n+1}) = - H(B_n)$. This is depicted in Figure \ref{fig:d2VertexRetro}.

\begin{figure}[h!]
\centering
  \includegraphics[scale=0.7]{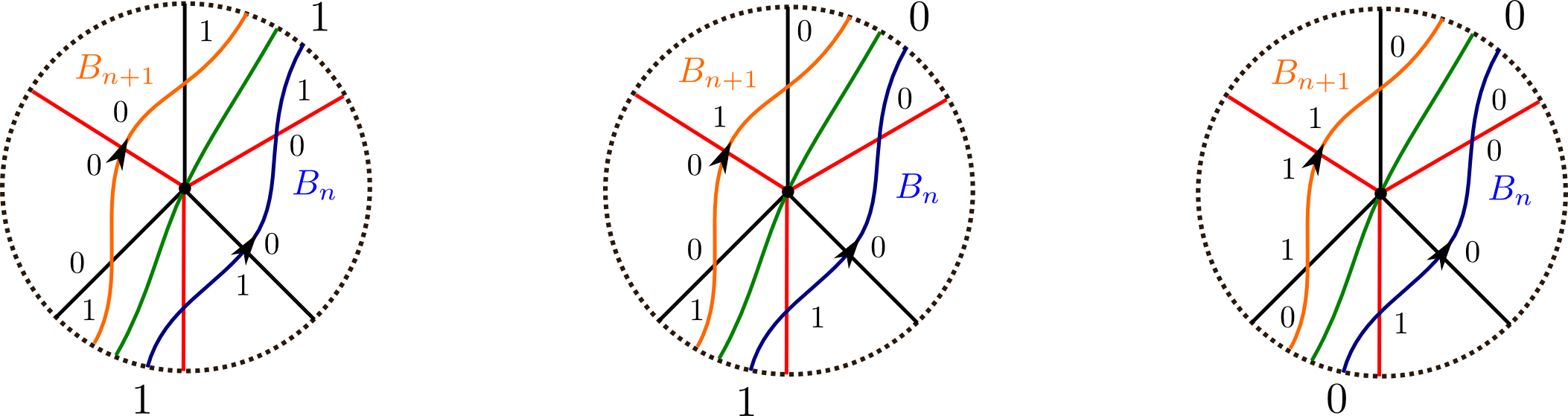}
  \caption{Transition to $B_{n+1}$ near a vertex crossing in three cases. The singular leaf (green) is resolved according to the invariance principle.}\label{fig:d2VertexCrossing}
\end{figure}

\begin{figure}[h!]
\centering
  \includegraphics[scale=0.75]{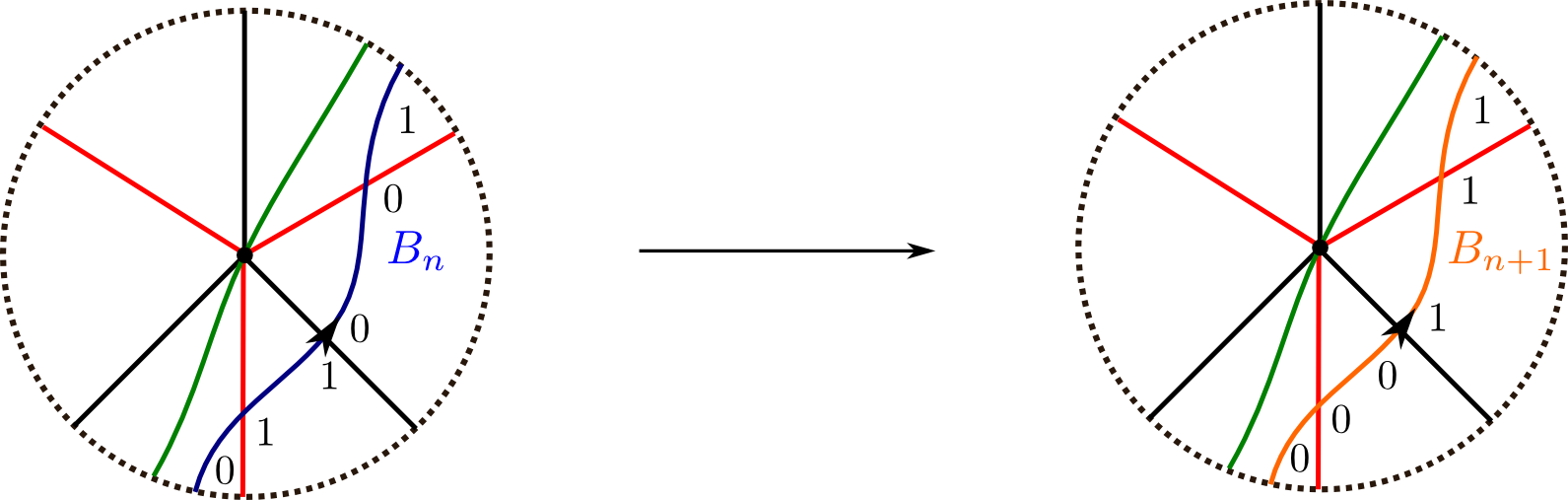}
  \caption{The retrograde transition $B_{n+1}$ near a vertex crossing.}\label{fig:d2VertexRetro}
\end{figure}

{\bf Case D:} The singular leaf is $\SL^{\op{crit}} = \{\pm 1\} \x [-1, 1]$.

In this case either $j=1$ and $\sigma_n = -1$ or $j = n(\SL)+1$ and $\sigma_n = 1$. In the latter case, the regular leaf $\SL_j$ does not intersect any edges or threads and therefore $B_n$ is constant; then let $B_{n+1}$ be the binary sequence along $\SL_1$ which is equal to the same constant. In the former case, the regular leaf $\SL_j$ does not intersect any edges but it does intersect all exterior threads. Since we only consider the cases $(a,b) = (1,1)$, $(1,0)$, or $(0,0)$, it follows that $B_n$ must also be constant: the sequence $B_{n+1}$ is the binary sequence along $\SL_{n(\SL)+1}$ with that same constant. In both of these cases, we have $H(B_{n+1}) = H(B_n) = 1$.

{\bf Case E:} The singular leaf $\SL^{\op{crit}} = \gamma_f$ is the tine for a face $f\in F$.

There are two possible depending on whether $B_n$ switches value on a thread at a point near $c_f$. In case the binary sequence $B_n$ is constant near the center $c_f$, we define $B_{n+1}$ to be the binary sequence along $\SL_{j+\sigma_n}$ which is also constant near $c_f$ and equal to $B_n$; it readily follows that $H(B_{n+1}) = H(B_n)$.

In the case that $B_n$ switches its value from $0$ to $1$ at a point $q$ near a center $c_f$ on the thread $\tau_f(v)$, we define $B_{n+1}=(B^*,\tau_f(v))\in \mathcal{B}^{\op{*}}_{a,b}$ to be the binary sequence $B^*$ along the tine $\gamma_f$ which switches its value at $c_f$ and is otherwise equals $B_n$. Notice that $\sigma(q) = -\sigma_n$ since $\gamma_f$ lying to the right of the regular leaf $\SL_j$ implies that the thread $\tau_f(v)$ lies to the left of the tine $\gamma_f$, and vice versa. In consequence the contribution satisfies $H(B_{n+1}) = \sigma(q) H(B_n) = -\sigma_n H(B_n) = - H(B_0)$.

This completes the definition of the involution $\Psi:\mathcal{B}^{\op{*}}_{a,b} \lr \mathcal{B}^{\op{*}}_{a,b}$. It is readily seen that switching the sign of $\sigma_n$ at a step reverses the sequential process. In particular, this implies that the sequence $\{B_n\}$ must eventually lie in the critical set $\mathcal{B}^{\op{*}}_{a,b}$, since otherwise we would have an infinite sequence of distinct elements in a finite set; this also shows that whenever $B'_0 = B_N$ we must also have $B'_n = B_{N-n}$, which shows that $\Psi:\mathcal{B}^{\op{*}}_{a,b} \lr \mathcal{B}^{\op{*}}_{a,b}$ is an involution.\end{proof}

\section{Computations}\label{sec:app}

In this section we shall be using the notation $\e_{a_1\cdots a_k,b_1\cdots b_l}=e_{a_1}\cdots e_{a_k}e^{-1}_{b_1}\cdots e_{b_l}^{-1}$.

\subsection{4-vertex graphs}\label{ssec:chek} Let us start with the two planar cubic graphs with genus $g=1$. First, consider the graph $G_1$ at the left of Figure \ref{fig:g1graphs} with the depicted choice of garden, which corresponds to the Legendrian front of the Chekanov torus. Note that we only depict the threads which intersect a tine on their interior. In this case $g=1$ and thus the dg-algebra is to be generated as an $\F[e_1^{\pm1},\ldots,e_6^{\pm1}]$-algebra by the degree-2 elements $x,y,z\in(\wt\A_{G_1})_2$ and $f_1,f_2,f_3\in(\wt\A_{G_1})_1$.

\begin{figure}[h!]
\centering
  \includegraphics[scale=0.65]{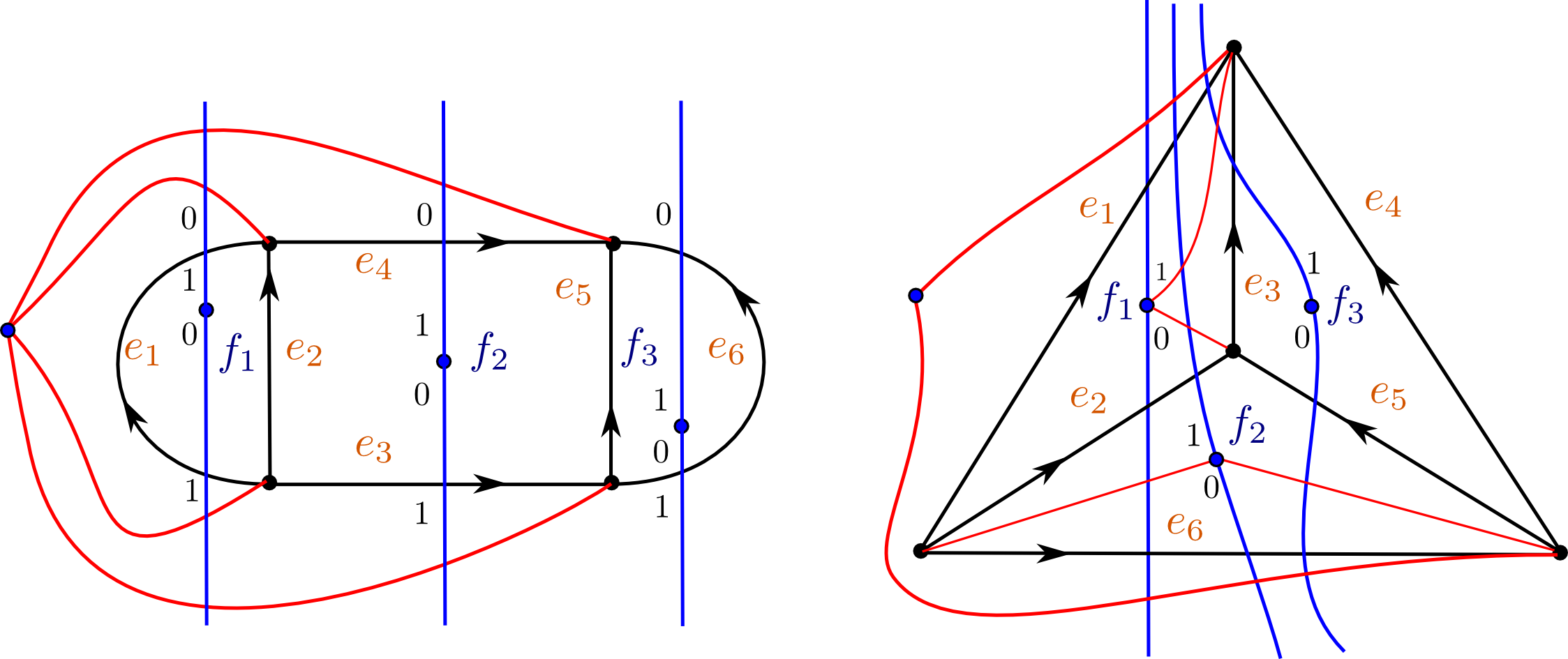}
  \caption{Planar cubic graphs with $g=1$ and gardens.}\label{fig:g1graphs}
\end{figure}

The differential is given by the count of binary sequences, which by this specific choice are restricted to the values depicted in Figure \ref{fig:g1graphs}. The count yields the following result:

\begin{align*}
\wt\dd_\Gamma x &= -e^{-2}_1(-\e_{14,2}+\e_{46,5})\cdot f_1+e_3^{-1}e_4^{-1}\e_{46,5}\cdot f_2,\\
\wt\dd_\Gamma y &= -e^{-2}_1\cdot f_1+e_3^{-1}e_4^{-1}\cdot f_2-e_6^{-2}\cdot f_3,\\
\wt\dd_\Gamma z &= (-\e_{13,2}+\e_{36,5})(-e^{-2}_1)\cdot f_1+\e_{36,5}e_3^{-1}e_4^{-1}\cdot f_2,
\end{align*}

and it is readily computed that $\wt\dd_\Gamma$ is indeed a differential:

\begin{align*}
\wt\dd^2_\Gamma x &= -e^{-2}_1(-\e_{14,2}+\e_{46,5})(-\e_{12,4}-\e_{12,3})+e_3^{-1}e_4^{-1}\e_{46,5}(\e_{45,6}+\e_{35,6}-\e_{24,1}-\e_{23,1})=0,\\
\wt\dd^2_\Gamma y &= -e^{-2}_1(-\e_{12,4}-\e_{12,3})+e_3^{-1}e_4^{-1}(\e_{45,6}+\e_{35,6}-\e_{24,1}-\e_{23,1})-e_6^{-2}(\e_{56,4}+\e_{56,3})=0,\\
\wt\dd^2_\Gamma z &= (-\e_{13,2}+\e_{36,5})(-e^{-2}_1)(-\e_{12,4}-\e_{12,3})+\e_{36,5}e_3^{-1}e_4^{-1}(\e_{45,6}+\e_{35,6}-\e_{24,1}-\e_{23,1})=0.
\end{align*}

This computation is particularly simple due to the choice of garden, and the reader is invited to study the sequences $\{B_n\}$ appearing in the proof of Theorem \ref{thm:dsquared}, which in this case can be quickly done by hand.

Second, consider the graph $G_2$ shown in the right of Figure \ref{fig:g1graphs}. From the Legendrian viewpoint it corresponds to the Clifford torus, which is part of the mirror of the pair-of-pants \cite{Na,TZ}, and it arises as the critical graph of $SU(2)$ supersymmetric QCD with $N_f=4$ quarks \cite{EHIY,Lo}. Let us compute the differential in the dg--algebra structure:

\begin{align*}
\wt\dd_\Gamma x &= (-e_6^{-1})\e_{26,1}(-e_2^{-1})(-e_1^{-1})\e_{14,3}\cdot f_1+(-e_6^{-1})(-e_2^{-1})(e_1+(-\e_{23,5}+\e_{13,4})(-e_1^{-1})\e_{14,3})\cdot f_2\\
&\qquad +(-e_6^{-1})\e_{56,4}e_5^{-1}(-e_3^{-1})(e_1+\e_{13,4}(-e_1^{-1})\e_{14,3})\cdot f_3\\
& = -\e_{4,13}f_1 + \e_{4,56}f_2,\\
\wt\dd_\Gamma y &= (-e_6^{-1})\e_{26,1}(-e_2^{-1})(-e_1^{-1})\cdot f_1 + (-e_6^{-1})(-e_2^{-1})(-\e_{23,5}+\e_{13,4})(-e_1^{-1})\cdot f_2\\
&\qquad +(-e_6^{-1})\e_{56,4}e_5^{-1}(-e_3^{-1})\e_{13,4}(-e_1^{-1})\cdot f_3 \\
&=-e_1^{-2} f_1 + (\e_{3,156} - \e_{3,246})f_2 - e_4^{-2}f_3,\\
\wt\dd_\Gamma z &= (\e_{46,5}(-e_6^{-1})\e_{26,1} + e_6)(-e_2^{-1})(-e_1^{-1})\cdot f_1 + \e_{46,5}(-e_6^{-1})(-e_2^{-1})(-\e_{23,5}+\e_{13,4})(-e_1^{-1})\cdot f_2\\
&\qquad + (\e_{46,5}(-e_6^{-1})\e_{56,4}+ e_6)e_5^{-1}(-e_3^{-1})\e_{13,4}(-e_1^{-1})\cdot f_3\\
&= (-\e_{46,115} + \e_{6,12})f_1 + (\e_{34,155} - \e_{3,25})f_2,
\end{align*}
and also verify that $\wt\dd_\Gamma$ is indeed a differential:
\begin{align*}
\wt\dd_\Gamma^2 x &= -\e_{4,13}(-\e_{12,6}-\e_{23,5}+\e_{13,4}) + \e_{4,56}(-\e_{26,1}-\e_{25,3}+\e_{56,4}) = 0,\\
\wt\dd_\Gamma^2 y &= -e_1^{-2}(-\e_{12,6}-\e_{23,5}+\e_{13,4}) + (\e_{3,156} - \e_{3,246})(-\e_{26,1}-\e_{25,3}+\e_{56,4})\\
&\qquad - e_4^{-2}(\e_{45,6} - \e_{35,2} + \e_{34,1}) = 0,\\
\wt\dd_\Gamma^2 z &= (-\e_{46,115} + \e_{6,12})(-\e_{12,6}-\e_{23,5}+\e_{13,4})+ (\e_{34,155} - \e_{3,25})(-\e_{26,1}-\e_{25,3}+\e_{56,4})=0.
\end{align*}

\subsection{Blown-up 4-prism} Let us consider the graph $G$ depicted in Figure \ref{fig:1cube}, which consists of a modification of the 4-prism graph by addition of an interior edge, with the choice of garden. In this case we shall count unsigned binary sequences, the reader is invited to endow the garden with an orientation and compute the sign contributions.

\begin{figure}[h!]
\centering
  \includegraphics[scale=0.65]{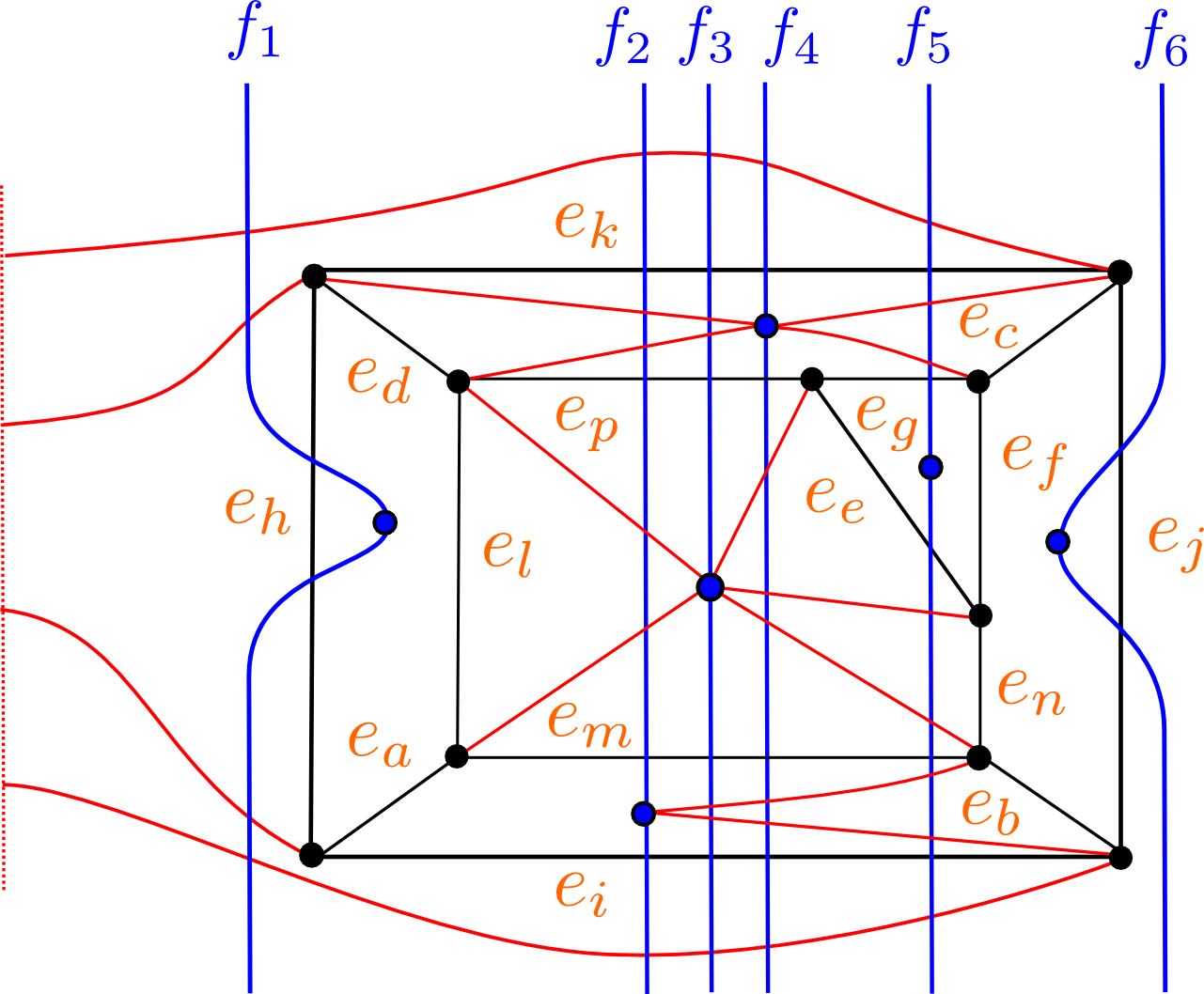}
    \caption{The modified 4-prism graph with a choice of garden.}\label{fig:1cube}
\end{figure}
\vspace{1cm}

The binary sequences that contribute to the degree-2 differential of the dg--algebra read as follows:
\begin{align*}
\wt\dd_\Gamma x&=[\e_{kj,c}(dy-\e_{0,jj}f_6)+\e_{hk,d}\e_{0,hh}f_1]+\e_{k,imp}(\e_{lm,a}+\e_{lp,d})f_2\\
&+\e_{k,imp}(\e_{ib,j}+\e_{mb,n})f_3+\e_{k,ieg}(\e_{m,0}+\e_{0,m}(\e_{ib,j}+\e_{mb,n})(\e_{mn,b}+\e_{en,f}))f_5,\\\\
\wt\dd_\Gamma y&= \e_{0,hh}f_1+\e_{0,ikm}(\e_{p,0}+\e_{0,p}(\e_{lm,a}+\e_{lp,d})(\e_{dp,l}+\e_{dk,h}))f_2\\
&+\e_{0,ikmp}(\e_{ib,j}+\e_{mb,n})(\e_{dp,l}+\e_{dk,h})f_3+\e_{0,ikp}(\e_{m,0}+\e_{0,m}(\e_{ib,j}+\e_{mb,n})(\e_{mn,b}+\e_{ne,f}+\e_{pe,g}))f_4\\
&+\e_{0,ikeg}(\e_{gc,f}+\e_{kc,j})(\e_{m,0}+\e_{0,m}(\e_{ib,j}+\e_{mb,n})(\e_{mn,b}+\e_{en,f}))f_5+\e_{0,jj}f_6,\\\\
\wt\dd_\Gamma z&=[\e_{ji,b}(dy+\e_{0,jj}\e_6)+\e_{0,hh}\e_{ih,a}\e_1]+\e_{i,mkp}(\e_{dp,l}+\e_{dk,h})\e_3\\
&+\e_{i,mkp}(\e_{nm,b}+\e_{en,f}+\e_{ep,g})\e_4+\e_{i,mkeg}(\e_{nm,b}+\e_{en,f})(\e_{gc,f}+\e_{ck,j})\e_5.\\
\end{align*}

Let us verify that in a field $\F$ of characteristic two, where the result is indepedent of sign contributions, the operator $\wt\dd_\Gamma$ squares to zero. Indeed, the square of the differential yields the following contributions:

\begin{align*}
\wt\dd_\Gamma^2x&= 2 + 2\e_{ b k,c i} + 2\e_{ a k,d i} + 2\e_{ f k,g j} + 2\e_{ b e k,
 g j m} + 2\e_{ a k l,d h m} + 2\e_{ b k l,d j m} + 2\e_{ f k m,
 c e i}\\
 &+ 2\e_{ b e k,g i n} + 2\e_{ b k l,d i n} + 2\e_{ f k m,
 g i n} + 2\e_{ f k n,c e j} + 2\e_{ b k n,c j m} + 2\e_{ b e k,
 f i p} + 2\e_{ k l,h p}\\
 &+ 2\e_{ b k l,a j p} + 4\e_{ k m,i p} + 2\e_{
  b k l m,a i n p} + 2\e_{ k n,j p} + 2\e_{ b e k n,f j m p}=0\\
\vspace{0.5cm}
\wt\dd_\Gamma^2y&= 2\e_{ a,h i} + 2\e_{ c f,g j^2} + 2\e_{ b,i j} + 2\e_{ d,h k} + 2\e_{ b g,
 f i k} + 2\e_{ c,j k} + 2\e_{ b d,a j k} \\
 &+ 2\e_{ b d e,f i k l} + 2\e_{ b c e,g j^2 m} + 2\e_{ b d e,g h j m} + 2\e_{ b c e,f j k m} + 2\e_{a l,h^2 m} + 2\e_{ b l,h j m} + 2\e_{ fm,e i j}\\
 &+ 4\e_{ g m,e i k} + 4\e_{ d m,i k l} + 2\e_{ b d e,g h i n} + 2\e_{ b c e,g i j n}+ 2\e_{ b c e,f i k n} + 2\e_{ b l,h i n} + 2\e_{ c f m,g i j n}\\
 &+ 2\e_{ c m,i k n} + 2\e_{ b d m,a i k n} + 2\e_{ f n, e j^2} + 2\e_{ g n,e j k} + 2\e_{ d n,j k l} + 2\e_{ b n,j^2 m} + 2\e_{ b g n,f j k m}\\
 &+ 2\e_{ b d e n,f j k l m} + 2\e_{ b d e,f h i p} + 2\e_{ b ce,f i j p} + 2\e_{ b c e g,f^2 i k p} + 2\e_{ d l,h^2 p} + 2\e_{ b d l,a h j p} + 4\e_{ d m,h i p}\\
 &+ 4\e_{ c m,i j p} + 4\e_{c g m,f i k p} + 2\e_{ b d l m,a h i n p} + 2\e_{ cn,j^2 p} + 2\e_{ d n,h j p} + 2\e_{ c g n,f j k p} + 2\e_{ b c e n,f j^2 m p}\\
 &+ 2\e_{ b d e n,f h j m p} + 2\e_{ b c e g n,f^2 j k m p} + 2\e_{ a p,i k l} + 2\e_{ a p,h k m} + 4\e_{ b p,j k m} + 2\e_{ b d e p,g j k l m} + 4\e_{ b p,i k n}\\
 &+ 2\e_{ b d e p,g i k l n}=0\\
\vspace{0.5cm}
\wt\dd_\Gamma^2z&=2 + 2\e_{ c f i,b g j} + 2\e_{ c i,b k} + 2\e_{ d i,a k} + 2\e_{ d e i,
 g h m} + 2\e_{ c e i,g j m} + 2\e_{ c e i,f k m}\\
&+ 2\e_{ i l,h m} + \e_{
 2 f i n,b e j} + 2\e_{ g i n,b e k} + 2\e_{ d i n,b k l} + 2\e_{ i n,
 j m} + 2\e_{ g i n,f k m} + 2\e_{ d e i n,f k l m}\\
&+ 2\e_{ d i l,
 a h p} + 2\e_{ d i n,b h p} + 2\e_{ c i n,b j p} + 2\e_{ c g i n,
 b f k p} + 2\e_{ d e i n,f h m p} + 2\e_{ c e i n,f j m p} + \e_{
 2 c e g i n,f^2 k m p}\\
&+ 2\e_{ i p,k m} + 2\e_{ d e i p,g k l m}=0
\end{align*}

\section{Invariant and T-versal dg-algebras of binary sequences} \label{sec:real DGA}

Let $G$ be a bridgeless cubic planar graph, in this section we construct a dg-algebra $(\A_G, \dd_\Gamma)$ from the dg-algebra $(\wt \A_G, \wt \dd)$ of binary sequences. For reasons that will become apparent, we refer to $(\A_G, \dd_\Gamma)$ as the invariant dg-algebra of binary sequences.

Extend the algebra $\wt\A_G$ by adding a central invertible element $t$, to obtain the algebra $\wt\A_G[t^{\pm 1}]$, which is defined as a dg-algebra by assigning $t$ grading $0$ and defining $\wt\dd_\Gamma t = 0$. Given a face $f \in F$, consider the algebra map
$$\p_f: \wt \A_G[t^{\pm 1}] \lr \wt \A_G[t^{\pm 1}],\qquad \p_f(e_i)=
\begin{cases}
t e_i &\mbox{if }e_i \sse f,\\
e_i &\mbox{if }e_i\not\sse f,
\end{cases}\qquad
\p_f(f_j)=
\begin{cases}
t^2 f_j &\mbox{if }f_j = f,\\
f_j &\mbox{if }f_j\neq f,
\end{cases}
$$

and $\p_f(x) = x$, $\p_f(y) = y$, $\p_f(z) = z$. In addition we define
$$\p_0: \wt \A_G[t^{\pm 1}] \to \wt \A_G[t^{\pm 1}],\quad \p_0(e_i) = t e_i,\quad \p_0(f_j) = t f_j,\quad \p_0(x) = x,\quad\p_0(y) = t^{-1}y,\quad \p_0(z) = z,$$
for all $e_i \in E$ and $f_j \in F$. It is readily seen that these maps commute and therefore define an action of the lattice $\Z^{g+3}$ on $\wt \A_G[t^{\pm 1}]$ by graded algebra isomorphisms. The following proposition shows that this actually defines an action on $(\wt \A_G[t^{\pm 1}], \wt \dd_\Gamma)$ by dg-algebra isomorphisms.

\begin{prop}\label{prop:action}
Let $G$ be a planar cubic bridgeless graph equipped with a garden $\Gamma$. Then
$$\wt \dd_\Gamma \circ \p_0 = \p_0 \circ \wt \dd_\Gamma,\qquad \wt \dd_\Gamma \circ \p_f = \p_f \circ \wt \dd_\Gamma,\quad \forall f \in F.$$
\end{prop}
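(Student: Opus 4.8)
The plan is to verify the commutation relations $\wt\dd_\Gamma \circ \p_0 = \p_0 \circ \wt\dd_\Gamma$ and $\wt\dd_\Gamma \circ \p_f = \p_f \circ \wt\dd_\Gamma$ directly on the generators $x, y, z$ and $f_j$, since both sides are algebra maps satisfying Leibniz, and both differential and the $\p$-maps vanish or act diagonally on $\Lambda_G[t^{\pm1}]$. Because $\wt\dd_\Gamma f_j = \sum_{v\in f_j} H(\tau_f(v))$ lies in $\Lambda_G$ and each $H(\tau_f(v)) = (-1)^{r_v}e_ne_me_k^{-1}$ is a product of three edges adjacent to $v$, the first reduction is to understand how $\p_0$ and $\p_f$ rescale a single thread contribution $H(\tau_f(v))$ and each binary-sequence contribution $H(B)$, and then to match the resulting $t$-powers against the gradings assigned to $x,y,z,f_j$.

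The key computation for $\p_0$: since $\p_0(e_i) = te_i$ for every edge and $H(\tau)$ is a product $e_ne_me_k^{-1}$ of net edge-degree $+1$, we get $\p_0(H(\tau_f(v))) = t\cdot H(\tau_f(v))$, so $\p_0(\wt\dd f_j) = t\cdot \wt\dd f_j = \wt\dd(tf_j) = \wt\dd \p_0(f_j)$, giving the face case. For $x,y,z$ one checks that each $H(B)$ is homogeneous of net edge-degree $+1$ as well: every edge crossing $q\in E(\gamma_f)$ contributes a factor $\pm e^{\pm1}$, and I would show these pair up except for a single net $e^{\pm1}$ determined by the boundary data $(a,b)$, while each thread switch contributes a net-degree-$+1$ factor $H(\tau_p)$. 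The bookkeeping needed is that the total net edge-degree of $H(B)$ equals $1 - (b-a)$ or a similar expression tracking the number of edge and center/thread switches; this must be reconciled with $\p_0(x)=x$, $\p_0(y)=t^{-1}y$, $\p_0(z)=z$, i.e.\ the relation $\wt\dd_\Gamma x = \sum_j(\cdots)f_j$ must scale as $t^{-1}$ relative to $f_j\mapsto tf_j$ for $y$ but as $t^0$ for $x,z$. The main obstacle is precisely this degree accounting: I expect the number of sign/value switches along $\gamma_f$ to be constrained by parity conditions forced by Definition \ref{def:binary} (the value must switch at every edge and at $c_f$, and the net switch from $a$ to $b$ is fixed), and translating that parity into the exact $t$-weight of $H(B)$ is the delicate step.

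For $\p_f$, the contribution $H(\tau_f(v))$ scales according to how many of its three edges lie in $f$; I would argue that exactly two of $e_n, e_m, e_k$ border $f$ (the two edges of $f$ meeting at $v$) while $e_k$ is opposite, so $\p_f$ multiplies $H(\tau_f(v))$ by $t^2$, matching $\p_f(f)=t^2f$ and $\p_f(f_j)=f_j$ for $j\ne f$. For the degree-$2$ generators, a binary sequence $B$ along $\gamma_{f_j}$ crosses the edges of $f$ an even number of times and switches at threads of $f$ in a controlled way; the claim reduces to showing the $t$-weight picked up by $H(B)$ under $\p_f$ exactly cancels against the weight $t^2$ on $f_j$ when $j=f$ and is trivial otherwise, consistent with $\p_f(x)=x$, $\p_f(y)=y$, $\p_f(z)=z$. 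The cleanest way to organize all of this is to observe that $\p_0$ and $\p_f$ are the degree-type gradings implicit in the grading $|x|=|y|=|z|=2$, $|f_j|=1$, $|e|=0$ refined by a per-face edge count, so that the commutation $\wt\dd_\Gamma\circ\p = \p\circ\wt\dd_\Gamma$ is equivalent to $\wt\dd_\Gamma$ being homogeneous with respect to these refined multigradings; the substance of the proof is then verifying that every generator of every summand $H(B)f_j$ of $\wt\dd_\Gamma w$ carries the same multiweight as $w$, which is a finite check on the definitions of $H(B,q)$ and $H(\tau)$ together with the switching rules.
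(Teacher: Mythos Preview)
Your overall strategy matches the paper's: verify on generators that each summand $H(\tau_{f_j}(v))$ of $\wt\dd_\Gamma f_j$ and each $H(B)f_j$ in $\wt\dd_\Gamma w$ is homogeneous of the correct $t$--weight under $\p_0$ and $\p_f$. The paper carries this out by the same reduction, so the plan is sound. However, two of your steps are genuine gaps, not just unfinished bookkeeping.

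First, your treatment of $\p_f$ acting on $\wt\dd_\Gamma f_j$ only handles the case $f_j=f$. The assertion ``exactly two of $e_n,e_m,e_k$ border $f$'' is correct for $\tau_{f_j}(v)$ with $f_j=f$, but fails for $f_j\neq f$: there the thread lies inside $f_j$, so $e_n,e_m\subseteq f_j$ and $e_k\not\subseteq f_j$, and you must show $\p_f$ fixes $e_ne_me_k^{-1}$. If $v\notin f$ this is trivial, but if $v\in f\cap f_j$ you need that exactly one of $e_n,e_m$ lies in $f$ and that $e_k$ does too, so the two $t$--factors cancel. This is precisely where the bridgeless hypothesis enters (two distinct faces meeting at a vertex share a unique edge there), and you never invoke it. Without it the claim is false.

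Second, for $\p_f$ acting on $H(B)$ your parity remark ``$\gamma_{f_j}$ crosses the edges of $f$ an even number of times'' is not enough, because the exponent in $H(B,q)=\pm e^{\pm1}$ is governed by the \emph{direction} of the switch, not merely by the crossing. The paper's argument analyzes each maximal segment of $\gamma_{f_j}$ inside $f$ and distinguishes three patterns (enter $0\to1$, exit $1\to0$; enter $1\to0$, exit $0\to1$; enter $1\to0$, switch at a thread of $f$, exit $1\to0$), checking in each that the net $t$--weight is zero; a fourth pattern (switching at $c_f$) occurs exactly once when $f_j=f$ and contributes the required $t^{-2}$. Your proposal does not isolate these cases, and the ``pair up'' heuristic does not survive the thread--switch case without this analysis. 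The $\p_0$ computation you flag as ``delicate'' is in fact the easy part: every $1\to0$ switch contributes $t^{-1}$, every $0\to1$ switch at an edge or thread contributes $t$, and the unique switch at $c_f$ contributes nothing, so $\p_0(H(B))=t^{\,b-a-1}H(B)$, which matches the weights on $x,y,z,f_j$.
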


\begin{proof}
It suffices to prove this for each face $f_j \in F$ and the three generators $x, y,$ and $z$.

Consider a face $f_j \in F$, then it is immediate that $(\p_0 \circ \wt \dd_\Gamma) (f_j) = (\wt \dd_\Gamma \circ \p_0)(f_j)$ since $\wt \dd_\Gamma f_j$ is a homogeneous Laurent polynomial in $E$ of degree $1$, and $\p_0$ multiplies $f_j$ and all $e_j \in E$ by $t$.

Suppose $f_j \neq f$, then we need to show that $\p_f(\wt \dd_\Gamma f_j) = \wt \dd_\Gamma f_j$. The Laurent polynomial $\wt \dd_\Gamma f_j$ is a sum over the vertices $v \in f_j$ of terms $\pm e_me_ne^{-1}_k$. If a given vertex $v$ is not contained in $f$, then none of the edges $e_m$, $e_n$, $e_k$ are contained in $f$, and therefore $\p_f$ acts trivially on this term. If $v$ is contained in $f$, then the two faces $f$ and $f_j$ share exactly one edge from $e_m, e_n$, and $e_k$, since $G$ is bridgeless. By definition the edge $e_k$ is not contained in the face $f_j$ and in consequence $e_k \sse f$. Suppose that $e_m \sse f \cap f_j$, then the automorphism $\p_f$ multiplies $e_m$ and $e_k$ by $t$ and leaves $e_n$ fixed, which implies that the term $\pm e_me_ne_k^{-1}$ is left fixed by $\p_f$.

Suppose that $f = f_j$, then we must prove $\p_f(\wt \dd_\Gamma f) = t^2 \wt \dd_\Gamma f$. The Laurent polynomial $\wt \dd_\Gamma f$ is a sum of terms $\pm e_m e_n e_k^{-1}$ with $e_m, e_n \sse f$, whereas $e_k$ is not contained in $f$ and therefore $\p_f$ multiplies all terms $\pm e_m e_n e_k^{-1}$ by $t^2$. This completes the proof that $\wt \dd_\Gamma$ commutes with $\p_0$ and $\p_f$ on $(\wt \A_G)_1$.

We now consider $(\wt\A_G)_2$. Let $B \in \mathcal{B}_{a,b}(f_j)$ a binary sequence along the tine $\gamma_{f_j}$. By definition of the differential $\wt \dd_\Gamma$ acting on $x, y$ and $z$, the following four equalities imply the desired statement: 

\begin{equation}\label{eqn:action}
\p_f(H(B)) = H(B) \text{ whenever } f_j \neq f,\qquad \p_f(H(B)) = t^{-2} H(B) \text{ if } f_j = f,\end{equation}
$$\p_0(H(B)) = t^{-1} H(B) \text{ if } a=b,\qquad \p_0(H(B)) = t^{-2} H(B) \text{ if } a=1, b=0.$$

We now establish each of these claims. For the first equality, note that $H(\tau) = \pm e_n e_m e_k ^{-1}$ is fixed by $\p_f$ unless $\tau$ is a thread in the web associated to the face $f$, in which case $\p_f(H(\tau)) = t^2 H(\tau)$. This is for the same reason as above: either the thread $\tau$ is completely disjoint from $f$ -- in which case $\p_f$ acts trivially -- or else $\p_f$ multiplies two of the edges by the constant $t$. The fact that these terms might cancel or not is determined by whether $\tau$ is a thread contained in $f$. Therefore the automorphism $\p_f$ only affects $H(B)$ by acting on those factors which arise from the segments of $\gamma_{f_j}$ passing through the face $f$. There are three possibilities each time this occurs:

\begin{itemize}
 \item[1.] $B$ switches from $0$ to $1$ when it enters $f$ and switches from $1$ back to $0$ when it exits,
 \item[2.] $B$ switches from $1$ to $0$ when it enters $f$ and switches from $0$ back to $1$ when it exits,
 \item[3.] $B$ switches from $1$ to $0$ when it enters $f$, switches from $0$ to $1$ at a thread $\tau$ in the web of $f$, and switches from $1$ to $0$ again when it exits $f$.
\end{itemize}

A switch at an edge $e_i$ contributes a factor $\pm e_i^{\pm 1}$ to $H(B)$, where the sign of the exponent being determined by whether the switch is $1$ to $0$ or $0$ to $1$, and therefore the automorphism $\p_f$ acts invariantly in the first two cases. In the third case, the exponent is negative for both the entrance and the exit and thus the $t^{-2}$ factor that comes from applying the automorphism $\p_f$ to these factors exactly cancels the $t^2$ factor coming from applying $\p_f$ to $H(\tau)$. This proves the first equality in Equation \ref{eqn:action}.

Consider the second equality, where $f_j = f$. The argument proceeds exactly as before, except that there is a further possibility when $\gamma_f$ enters $f$:
\begin{itemize}
 \item[4.] $B$ switches from $1$ to $0$ when it enters $f$, it switches from $0$ to $1$ at $c_f$, and switches from $1$ back to $0$ when it exits $f$.
\end{itemize}

This fourth case must happen exactly once along the binary sequence $B$, and since the switch at the center $c_f$ contributes no factors to $H(B)$, once we apply the automorphism $\p_f$ we are left with a factor of $t^{-2}$, as required.

For the remaining two equalities in Equation \ref{eqn:action} we use that $\p_0$ multiplies all faces and edges by $t$. Every time that a binary sequence $B$ switches from $1$ to $0$, which can only happen when the tine $\gamma_{f_j}$ crosses an edge, the automorphism $\p_0$ introduces an additional factor of $t^{-1}$ to $H(B)$. In the case where $B$ switches from $0$ to $1$, it is either at an edge, a thread or at the center $c_{f_j}$. In the former two cases $\p_0$ introduces a factor of $t$ and in the latter case, which occurs exactly once, acts trivially. Therefore the exponent of $t$ which comes from applying the automorpshim $\p_0$ to $H(B)$ is the total count of switches along the binary sequence $B$, which is $\p_0(H(B)) = t^{b-a-1} H(B)$.\end{proof}

\begin{definition}\label{def:A}
Let $(G,\Gamma)$ be a cubic planar bridgeless graph endowed with a garden. We define $\A_G \sse \wt \A_G[t^{\pm 1}]$ to be the subalgebra of elements which are contained in $\wt \A_G$ and which are fixed by the action of $\Z^{g+3}$ via dg-isomorphisms. The differential $\wt \dd_\Gamma$ restricts to a differential $\dd_\Gamma: \A_G \lr \A_G$, and the dg-isomorphism type of the dg-algebra $(\A_G, \dd_\Gamma)$ is referred to as the {\bf invariant dg-algebra of binary sequences}.\hfill$\Box$
\end{definition}

\begin{remark}\label{rmk:bridgeless}
The algebra $(\A_G, \dd_\Gamma)$ can be defined for a graph $G$ containing bridges, in which case the definition of the $\Z^{g+3}$ action must be slightly generalized; we define $\p_f(e) = t^2 e$ if $e$ is the edge of $f$ from both sides. We refrain from discussing this situation in order to avoid even more cases in the proof above, and also to avoid the exceptional cases when discussing the upcoming $T$-versal dg-algebras.

In addition, note that the dg-algebra $(\wt\A_G, \wt \dd)$ is acyclic if $G$ contains a bridge. Indeed, in that case $\wt\dd h=1$ for some $h\in\wt\A_G$ and therefore $1=0$ in the cohomology $H_*(\wt\A_G, \wt\dd)$, which implies its vanishing. This is also reflected by the fact that the dual graph $G^*$ contains a loop and therefore it does not admit any colorings, see Appendix \ref{app:kevin}.\hfill$\Box$
\end{remark}

Let us prove that the dg-isomorphism type of $(\A_G, \dd_\Gamma)$ does not depend on the choice of garden $\Gamma$.

\begin{thm}\label{thm:A inv}
Let $(G,\Gamma)$ be a decorated cubic planar bridgeless graph. The invariant dg-algebra of binary sequences $(\A_G, \dd_\Gamma)$ does not depend on the choice of garden $\Gamma$.
\end{thm}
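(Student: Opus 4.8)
The plan is to reduce the garden-independence of the \emph{invariant} dg-algebra $(\A_G,\dd_\Gamma)$ to the already-established garden-independence of the \emph{full} dg-algebra $(\wt\A_G,\wt\dd_\Gamma)$ from Theorem \ref{thm:garden indep}. The central issue is that Theorem \ref{thm:garden indep} produces, for any two gardens $\Gamma_0,\Gamma_1$, a graded algebra isomorphism $\p:(\wt\A_G,\wt\dd_{\Gamma_0})\lr(\wt\A_G,\wt\dd_{\Gamma_1})$ commuting with the differentials, but it does not immediately follow that $\p$ carries the subalgebra $\A_G$ of $\Z^{g+3}$-fixed elements computed with respect to $\Gamma_0$ onto the corresponding subalgebra computed with respect to $\Gamma_1$. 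Thus the key step is to show that $\p$ is \emph{equivariant} for the $\Z^{g+3}$-actions defined by the two gardens, after extending scalars to $\wt\A_G[t^{\pm1}]$. Once this is known, $\p$ restricts to a dg-isomorphism between the fixed subalgebras, which is exactly the statement of the theorem.

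First I would extend each isomorphism $\p$ from Theorem \ref{thm:garden indep} to an isomorphism of $\wt\A_G[t^{\pm1}]$ by declaring $\p(t)=t$; since $t$ is central of degree $0$ with $\wt\dd_\Gamma t=0$, this is automatic and preserves the dg-structure. The substance is then to check $\p\circ\p_f^{\Gamma_0}=\p_f^{\Gamma_1}\circ\p$ for every face $f$, and similarly for $\p_0$. Here it is essential to examine the explicit formulas for $\p$ furnished in the proofs of Propositions \ref{prop:rake indep} and \ref{prop:anchor indep} and in Theorem \ref{thm:garden indep}: in each case $\p$ is the identity on edges and faces and modifies only the degree-$2$ generators $x,y,z$ by adding terms of the form $(\sum_B H(B))\,f_mf_n$ or $e_me_ne_k^{-1}y$, or in the orientation case negates a single edge $e_i$. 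Since the maps $\p_f$ and $\p_0$ are diagonal on edges and faces and fix $x,y,z$, equivariance on edges and faces is immediate; the only thing to verify is that the correction terms added to $x,y,z$ transform in the same way under $\p_f$ (resp.\ $\p_0$) as the generators $x,y,z$ themselves do. Because the correction terms are built from exactly the same combinatorial data $H(B)$ governed by Equation \ref{eqn:action}, the weight of a term $(\sum_B H(B))f_mf_n$ under $\p_f$ or $\p_0$ must match the weight of the degree-$2$ generator being corrected; this is the homogeneity computation I would carry out for each of the three moves (tine switch, bottom–top thread-at-infinity move, edge reorientation).

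The step I expect to be the main obstacle is precisely verifying this compatibility for the tine-switch isomorphism of Proposition \ref{prop:rake indep}, where the correction involves the transitional count $\sum_{B\in\mathcal{B}_{a,b}(\gamma_{m,n})}H(B)$ along the intermediate curve $\gamma_{m,n}$. One must confirm that the weight computation of Equation \ref{eqn:action}, originally stated for binary sequences along a genuine tine $\gamma_{f}$, applies verbatim to sequences along the generalized tine $\gamma_{m,n}$, so that $\p_f(H(B))$ and $\p_0(H(B))$ scale by the same power of $t$ as for ordinary tines and hence $(\sum_B H(B))f_mf_n$ carries the correct total weight matching $x$, $y$, or $z$ according to the pair $(a,b)$. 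Granting this, equivariance holds on all generators, hence on all of $\wt\A_G[t^{\pm1}]$, so $\p(\A_G^{\Gamma_0})=\A_G^{\Gamma_1}$ and $\p$ restricts to the desired dg-isomorphism $(\A_G,\dd_{\Gamma_0})\cong(\A_G,\dd_{\Gamma_1})$. Finally I would remark that the equivariance with respect to $\p_0$ forces $\p$ to preserve the boundary-value grading implicit in the $\{x,y,z\}\leftrightarrow\{(1,1),(1,0),(0,0)\}$ correspondence, which is automatically respected by all three families of correction terms, completing the argument.
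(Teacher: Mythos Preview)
Your proposal is correct and follows essentially the same route as the paper: reduce to showing that each of the three non-identity garden isomorphisms (tine switch, bottom--top thread move, edge reorientation) commutes with the $\Z^{g+3}$-action, and then verify this by checking that the correction terms in Equations~\ref{eqn:tine iso} and~\ref{eqn:anchor iso} carry the right $t$-weight. One small inaccuracy: for the tine-switch correction along $\gamma_{m,n}$ the scaling of $H(B)$ under $\p_0$ and $\p_f$ is \emph{not} literally the same power of $t$ as for an ordinary tine---because $\gamma_{m,n}$ passes through two centers rather than one, the exponents shift by one (e.g.\ $\p_0(H(B))=t^{-2}H(B)$ for $a=b$ and $t^{-3}H(B)$ for $(a,b)=(1,0)$)---but this is exactly what is needed so that $H(B)f_mf_n$ has the same weight as $x$, $z$, or $y$, and the argument from Equation~\ref{eqn:action} adapts with this minor bookkeeping change.
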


\begin{proof}
It suffices to prove that the isomorphisms constructed in Section \ref{sec:invar} commute with the action of $\Z^{g+3}$. In the course of the section there are three moves which induce a non-identity isomorphism: changing the rake by a tine switching, changing the threads at infinity of the web and changing the orientation of an edge.

The isomorphism induced by changing threads at infinity is given in Equation \ref{eqn:anchor iso}, which is readily seen to commute with $\Z^{g+3}$: the action of $\Z^{g+3}$ is trivial on $x$ and $z$, and thus it suffices to show that it is also trivial on $e_me_ne_k^{-1}y$. For the automorphism $\p_0$ this is satified since all edges are multiplied by $t$ whereas $y$ is multiplied by $t^{-1}$, for the automorphism $\p_f$ it is enough to notice that whenever $f$ contains $e_m$ it must also contain $e_k$ but never $e_n$, since by definition these three edges occur at an exterior vertex.

In the case of a tine switching, the induced isomorphism is given in Equation \ref{eqn:tine iso}, and we claim that this isomorphism commutes with the action of $\Z^{g+3}$. This follows from the next four claims for any binary sequence $B$ along a tine $\gamma_{m,n}$:
$$\p_f(H(B)) = H(B) \text{ whenever } f_m \neq f \neq f_n,\qquad\p_f(H(B)) = t^{-2} H(B) \text{ if } f = f_m \text{ or } f = f_n$$
$$\p_0(H(B)) = t^{-2} H(B) \text{ if } a=b,\qquad \p_0(H(B)) = t^{-3} H(B) \text{ if } a=1, b=0.$$
This is similar to Equation \ref{eqn:action} and proof is identical except that we consider $\gamma_{m,n}$ instead of $\gamma_{f_j}$. 

Finally, if we change the orientation of an edge $e_i$, the induced isomorphism sends $e_i$ to $-e_i$ and fixes the remaining elements, and this isomorphism action commutes with the action of $\Z^{g+3}$.\end{proof}

Being an algebra of invariant elements, it is difficult to compute $(\A_G,\dd_\Gamma)$ using Definition \ref{def:A}. In order to compute $(\A_G, \dd_\Gamma)$ in practice, we construct a slice for our quotient, in which we can compute more directly. Let us define a {\bf basis} of $G$ to be a choice of a vertex $v_0 \in V$ together with a tree $T \sse G$ which spans the vertex set $V \sm \{v_0\}$; notice that any basis consists of $2g$ edges.

\begin{definition}\label{def:tree A}
Let $(G,\Gamma)$ be a decorated planar cubic bridgeless graph with a basis $T=\{e_1, \ldots, e_{2g}\}$ and $\Lambda_T=\F[e_1, e_1^{-1}, \ldots, e_{2g}, e_{2g}^{-1}]$ the algebra of Laurent polynomials in these edges. Consider the map $Q: \Lambda_G \lr \Lambda_T$ which sends the edges in $E \sm T$ to the unit $1\in\Lambda_T$. We define the $\Lambda_T$-algebra

$$\A_T = \langle x, y, z, f_1, \ldots, f_{g+2} \rangle_{\Lambda_T}$$

and the induced map $Q: \wt \A_G \to \A_T$. Since $Q$ is surjective and $\wt \dd_\Gamma$ vanishes on the kernel of $Q$, this uniquely defines a map
$$\dd_T: \A_T \to \A_T,\qquad Q \circ \wt \dd_\Gamma = \dd_T \circ Q.$$
The pair $(\A_T, \dd_T)$ is said to be the {\bf $T$-versal dg-algebra of binary sequences} of $G$.\hfill$\Box$
\end{definition}

\begin{thm}\label{thm:basis inv}
Let $(G,\Gamma,T)$ be a decorated planar cubic bridgeless graph. Then the $T$-versal dg-algebra $(\A_T, \dd_T)$ is isomorphic to $(\A_G, \dd_\Gamma)$. In particular, the dg-isomorphism type of $(\A_T, \dd_T)$ is independent of the basis $T$.
\end{thm}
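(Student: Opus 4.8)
The plan is to exhibit an explicit isomorphism between $(\A_T, \dd_T)$ and the invariant subalgebra $(\A_G, \dd_\Gamma) \sse \wt\A_G[t^{\pm1}]$, and thereby deduce both the isomorphism statement and the independence of the basis $T$ as an immediate corollary. The conceptual content is that the surjection $Q$ of Definition \ref{def:tree A}, which kills the edges $E \sm T$ by setting them to $1$, is precisely a gauge-fixing slice for the $\Z^{g+3}$-action used to define $\A_G$. First I would verify that the composite $\A_G \hookrightarrow \wt\A_G[t^{\pm1}] \xrightarrow{Q'} \A_T$ (where $Q'$ extends $Q$ by $Q'(t)=1$) is the candidate isomorphism; equivalently, that every orbit of the $\Z^{g+3}$-action on $\wt\A_G$ meets the slice $\{e \in E\sm T \text{ set to } 1\}$ in exactly one point, giving a bijection on the level of generators and hence a graded algebra isomorphism. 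Since $Q$ intertwines $\wt\dd_\Gamma$ with $\dd_T$ by construction, and the inclusion $\A_G \hookrightarrow \wt\A_G[t^{\pm1}]$ intertwines $\dd_\Gamma$ with $\wt\dd_\Gamma$, the resulting map is automatically a dg-algebra map.

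The key steps, in order, are as follows. First I would understand the structure of the $\Z^{g+3}$-action concretely on the coefficient ring: the maps $\p_f$ and $\p_0$ act on the Laurent monomials $\La_G = \F[e_1^{\pm1},\ldots,e_{3g+3}^{\pm1}]$ by multiplication by powers of $t$ determined by a $(3g+3)\times(g+3)$ integer weight matrix recording how many times each edge appears around each face (together with the global $\p_0$-weight). A monomial $\prod e_i^{a_i}$ is invariant under the $t$-grading exactly when its total weight vanishes, so the ring of invariants in $\wt\A_G[t^{\pm1}]$ restricted to $\La_G$ is cut out by $g+3$ linear relations on the exponent lattice $\Z^{3g+3}$. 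The central computation is then to show that the projection $\Z^{3g+3} \to \Z^{2g}$ forgetting the $E\sm T$ coordinates restricts to an isomorphism from this rank-$2g$ invariant sublattice onto $\Z^{2g}$. This is a statement in the homology of the graph: the $g+3$ relations are, up to the $\p_0$-normalization, exactly the face-boundary relations, and a spanning tree $T$ on all but one vertex provides a complementary basis of the edge lattice, so that the invariant monomials are freely parametrized by the tree edges. I would phrase this as a short linear-algebra lemma, possibly invoking that the face boundaries of a connected planar graph span a lattice of the expected rank with the edges $E\sm T$ serving as free coordinates.

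With the coefficient-ring statement in hand, the remaining steps are formal. I would check that the invariant algebra $\A_G$ is the $\La_G^{\op{inv}}$-algebra generated by $x,y,z,f_1,\ldots,f_{g+2}$ suitably rescaled by powers of $t$ to make them invariant, and that $Q'$ sends this generating set bijectively to the generators $x,y,z,f_j$ of $\A_T$; that $Q'|_{\La_G^{\op{inv}}} \colon \La_G^{\op{inv}} \to \La_T$ is a ring isomorphism follows from the lattice isomorphism above, since setting $E\sm T$ to $1$ inverts the parametrization by tree edges. I would then assemble these into a graded $\F$-algebra isomorphism $\Phi\colon \A_G \to \A_T$, confirm $\Phi \circ \dd_\Gamma = \dd_T \circ \Phi$ using that both $Q$ and the inclusion commute with their respective differentials (the former by Definition \ref{def:tree A}, the latter by Definition \ref{def:A}), and conclude. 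The independence of $T$ is then immediate: for two trees $T, T'$ the two isomorphisms to the fixed object $(\A_G, \dd_\Gamma)$ compose to a dg-isomorphism $(\A_T,\dd_T) \cong (\A_{T'},\dd_{T'})$.

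The main obstacle I expect is the lattice-theoretic heart of step two: pinning down exactly which rescaled monomials are $\Z^{g+3}$-invariant and proving the projection to the tree-edge coordinates is a bijection. One must be careful that the $g+3$ weights (the $g+2$ face weights plus the global $\p_0$ weight) are genuinely independent and that they, together with the tree $T$, split the edge lattice $\Z^{3g+3} \cong \Z^{2g} \oplus \Z^{g+3}$ in the right way; the bridgeless hypothesis and the Euler-characteristic count $|E| = 3g+3$, $|V|=2g+2$, $|F|=g+2$ are what make the ranks match, so I would want to tie the argument cleanly to the rank computation $|E| - |T| = (3g+3) - 2g = g+3$. The grading bookkeeping for how $x,y,z$ and the $f_j$ must be multiplied by powers of $t$ to land in $\A_G$ is a secondary source of care, but it is dictated unambiguously by Equation \ref{eqn:action} and poses no real difficulty once the coefficient-ring isomorphism is established.
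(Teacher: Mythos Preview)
Your proposal is correct and follows essentially the same approach as the paper: both identify the invariant subalgebra by analyzing the $\Z^{g+3}$-action on the exponent lattice via the face--edge incidence matrix, show that the quotient map $Q$ restricted to invariants gives the isomorphism on the coefficient ring, and then extend formally to the degree-$1$ and degree-$2$ generators. The paper's version of your ``short linear-algebra lemma'' is Lemma~\ref{lem:slice}, which proves that the $(g+3)\times(g+3)$ submatrix $A$ corresponding to the edges $E\sm T$ has determinant $\pm 1$ by interpreting it as the adjacency matrix of the partial dual $G^*_T$ and reducing via valence-$1$ vertices to a small explicit matrix; this is precisely the integral splitting $\Z^{3g+3}\cong \Z^{2g}\oplus\Z^{g+3}$ you flagged as the main obstacle.
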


We first prove a lemma:

\begin{lemma}\label{lem:slice}
Let $(G,\Gamma,T)$ be a decorated planar cubic bridgeless graph. For any map $\beta: E \sm T \lr \Z$, there is a unique $\p \in \Z^{g+3}$, such that for every $e_i \in E \sm T$, $\p(e_i) = t^{\beta(e_i)} e_i$.
\end{lemma}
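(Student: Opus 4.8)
```latex
The plan is to understand the $\Z^{g+3}$ action concretely on the edge variables and to recast Lemma \ref{lem:slice} as a statement in linear algebra over $\Z$. The action is generated by $\p_0$ together with the face automorphisms $\p_f$ for $f\in F$, so $\Z^{g+3}$ has the natural basis indexed by the symbol $0$ and by the $g+2$ faces $f\in F$. For each generator, recording only the exponent of $t$ applied to each edge $e_i\in E$ gives a function $E\lr\Z$; assembling these gives a $\Z$-linear map $\Z^{g+3}\lr\Z^{E}=\Z^{3g+3}$, which I will call the \emph{weight map} $\mathcal{W}$. Explicitly, $\mathcal{W}(\p_0)$ is the all-ones vector (since $\p_0$ multiplies every edge by $t$), while $\mathcal{W}(\p_f)$ is the indicator vector of the set of edges contained in the face $f$ (each such edge acquires one factor of $t$). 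The content of the lemma is then: the composite of $\mathcal{W}$ with the projection $\Z^{E}\twoheadrightarrow\Z^{E\sm T}$ is a \emph{bijection} $\Z^{g+3}\lr\Z^{E\sm T}$.

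First I would count dimensions. A basis $T$ consists of $2g$ edges, so $E\sm T$ has $|E|-|T|=(3g+3)-2g=g+3$ edges; thus the source $\Z^{g+3}$ and the target $\Z^{E\sm T}\cong\Z^{g+3}$ have equal rank, and it suffices to prove that the composite map is an isomorphism over $\Z$, equivalently that it is represented by a matrix in $GL(g+3,\Z)$. Surjectivity and injectivity then follow simultaneously, giving both existence and uniqueness of the required $\p$. So the task reduces to showing that the $(g+3)\times(g+3)$ integer matrix $M$, whose columns are the restrictions to $E\sm T$ of the weight vectors $\mathcal{W}(\p_0)$ and $\mathcal{W}(\p_f)$, is unimodular, i.e.\ $\det M=\pm 1$.

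The key step is a geometric/combinatorial identification of the matrix $M$ with a well-understood object of the planar graph. Since $G$ is cubic planar and bridgeless, its faces $F$ are the vertices of the dual graph $G^*$, and each edge $e\in E$ corresponds to a dual edge separating the two faces on its two sides. The incidence ``edge $e_i$ is contained in face $f$'' is exactly the vertex--edge incidence of $G^*$; so the submatrix of $M$ coming from the $\p_f$ columns is a restriction of the (unsigned) incidence matrix of $G^*$, while the $\p_0$ column is a global all-ones vector. The plan is to exploit the standard fact that the edges $E\sm T$, where $T$ is a spanning tree of $G$ missing exactly one vertex, correspond under planar duality to a spanning structure of $G^*$: each edge outside $T$ closes up a unique cycle, and these cycles form a basis of the cycle space, dually a spanning tree (plus the distinguished face) of $G^*$. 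I would make this precise so that $M$ becomes, after reindexing, a matrix attached to a spanning tree of $G^*$ together with the all-ones row; a tree incidence matrix is unimodular, and the all-ones correction preserves this. Concretely, I expect that expanding $\det M$ along the $\p_0$ column, or performing row operations that subtract one face-column from its neighbors along the dual tree, telescopes the determinant down to $\pm 1$.

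The main obstacle will be pinning down the precise correspondence between the combinatorial data $E\sm T$ and a triangular or tree-like structure of the incidence matrix, so that unimodularity is manifest rather than merely plausible. In particular one must verify that each face $f$ shares at most the expected number of edges with $E\sm T$ and that the resulting matrix can be put into triangular form by an ordering of faces and edges compatible with the dual tree; the bridgeless hypothesis is what guarantees that no edge bounds the same face on both sides, so the incidence data is genuinely a simple-graph incidence matrix and the argument does not degenerate. Once that ordering is produced, the determinant computation is routine, and the $\Z$-unimodularity yields the unique $\p\in\Z^{g+3}$ with the prescribed weights $\beta$ on $E\sm T$.
```
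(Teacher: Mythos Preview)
Your overall strategy coincides with the paper's: translate the statement into the unimodularity of the $(g+3)\times(g+3)$ integer matrix $A$ whose columns are the all-ones vector (from $\p_0$) together with the face--edge indicator vectors (from the $\p_f$), restricted to the rows $E\sm T$. Both you and the paper then appeal to the planar dual to understand $A$.

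However, your duality identification is not quite right, and this is where the argument needs work. You assert that the edges in $E\sm T$ correspond ``dually to a spanning tree (plus the distinguished face) of $G^*$'' and that ``each edge outside $T$ closes up a unique cycle''. This is false as stated: $T$ is a tree on $V\sm\{v_0\}$, not a spanning tree of $G$, so the three edges at $v_0$ lie in $E\sm T$ but none of them closes a cycle in $T$. Dually, $E\sm T$ has $g+3$ edges while $G^*$ has $g+3$ vertices, so the partial dual $G^*_T$ (vertices $=$ faces including the exterior one, edges $=E\sm T$) is not a tree but a connected \emph{unicyclic} graph, and its unique cycle is the triangle around $v_0$. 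Consequently your invocation of ``a tree incidence matrix is unimodular'' does not apply directly, and the vague plan to ``telescope'' along a dual tree cannot be carried out as written.

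The paper fixes this by working with exactly this unicyclic $G^*_T$: expanding the determinant along columns corresponding to leaves of $G^*_T$ (each such column has a single $1$) prunes the graph down to a triangle attached by a chain to the exterior face; the resulting explicit matrix (all-ones first column, then the incidence pattern of triangle-plus-chain) is written out and shown by inspection to have determinant $\pm 1$. If you repair your duality claim to ``spanning unicyclic graph with a triangle'' and carry out this leaf-pruning reduction, your argument becomes the paper's.
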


\begin{proof}
Let us consider the automorphism $\p = \p_0^{\lambda_0} \circ \p_{f_1}^{\lambda_1} \circ \ldots \circ \p_{f_{g+2}}^{\lambda_{g+2}}$. Then, for each $e_i \in E \sm T$, we can write
$$\p(e_i) = t^{\lambda_0+\lambda_{f_m}+\lambda_{f_n}} e_i,$$
where $f_m$ and $f_n$ are the two faces containing $e_i$. In case $e_i$ is an exterior edge, then $\p(e_i) = t^{\lambda_0+\lambda_{f_m}} e_i$.

Therefore the system of equations $\{\p(e_i) = t^{\beta(e_i)}e_i\}$ for $e_i \in E \sm T$ is a set of $g+3$ equations in $g+3$ variables ($\lambda_0, \ldots \lambda_{g+2}$), which we claim has a unique solution for any $\beta$. To write this linearly, let $a_{ij}$ equal either $1$ or $0$ according to whether $e_i$ is adjacent to $f_j$, where $1 \leq i \leq g+3$ indexes all edges in $E \sm T$ and $1 \leq j \leq g+2$ indexes all faces. In addition, declare $a_{i0} = 1$ for all $i$ and consider $g+3 \x g+3$ matrix $A=(a_{ij})$. It suffices to show that $\det A = \pm 1$, since the equation to solve is

$$
\left(\rule{0cm}{1cm} \qquad A \qquad \right)
\left( \begin{array}{cc}
\lambda_0  \\
\lambda_1  \\
\vdots  \\
\lambda_{g+2}
\end{array} \right) = \left( \begin{array}{cc}
\beta(e_1)  \\
\beta(e_2) \\
\vdots  \\
\beta(e_{g+3})
\end{array} \right).
$$

Let $G^*_T$ be the partial dual of $G$ defined according to the condition that the vertices of $G^*_T$ are the faces of $G$, including the exterior face $f_0$, and the edges of $G^*_T$ are the edges of $G$ which are not in $T$. Since $T$ is a tree which spans all vertices of $G$ except for one, $G^*_T$ is a connected graph which has a unique embedded cycle, which is a triangle. The adjacency matrix of $G^*_T$ is nearly the same as the matrix $A$, the only difference is that we remove the column corresponding to $f_0$, and replace it with a column of all $1$s. In particular, if $G^*_T$ has a vertex different from $f_0$ of valence $1$, the corresponding column of $A$ has all $0$s except for a single $1$. For the purpose of computing $\det A$, we can then expand along this column without changing the determinant up to sign, which has the effect of removing this vertex from $G^*_T$ as well as the edge adjacent to it.

Therefore, we can assume without losing generality that $G^*_T$ has no vertices of valence $1$ except possibly $f_0$, that is, a triangle with a chain connecting to $f_0$. The resulting matrix is then
$$
A= \left( \begin{array}{cccccccc}
1 & 1 & 1 & 0 & 0 & \ldots & 0 & 0  \\
1 & 1 & 0 & 1 & 0 & \ldots & 0 & 0  \\
1 & 0 & 1 & 1 & 0 & \ldots & 0 & 0  \\
1 & 0 & 0 & 1 & 1 & \ldots & 0 & 0 \\
1 & 0 & 0 & 0 & 1 & \ldots & 0 & 0 \\
\vdots & \vdots & \vdots & \vdots & \vdots & \ddots & \vdots & \vdots \\
1 & 0 & 0 & 0 & 0 & \ldots & 1 & 1 \\
1 & 0 & 0 & 0 & 0 & \ldots & 0 & 1
\end{array} \right)$$
which is readily seen to have determinant $\pm 1$.\end{proof}

\begin{proof}[Proof of Theorem \ref{thm:basis inv}:]
As before we write an arbitrary $\p \in \Z^{g+3}$ as $\p = \p_0^{\lambda_0} \circ \p_{f_1}^{\lambda_1} \circ \ldots \circ \p_{f_{g+2}}^{\lambda_{g+2}}$. We also write $\p(e_i) = t^{\beta(e_i)}e_i$, where we adopt the convention that $E \sm T = \{e_1, \ldots, e_{g+3}\}$ and $T = \{e_{g+4}, \ldots, e_{3g+3}\}$. As before, we write an adjacency matrix satisfying

$$
\left(\begin{array}{cc}
A\\
\,
A_T
\end{array}\right) \left( \begin{array}{cc}
\lambda_0  \\
\lambda_1  \\
\vdots  \\
\lambda_{g+2}
\end{array} \right) = \left( \begin{array}{cc}
\beta(e_1)  \\
\vdots  \\
\beta(e_{3g+3})
\end{array} \right).
$$

Here $A$ is the same $g+3 \x g+3$ matrix appearing in the previous proof, and $A_T$ is a $2g \x g+3$ matrix defining how $\p$ acts on the edges in $T$. In Lemma \ref{lem:slice} it is shown that $A$ is invertable. Consider the elements $\wh e_i = e_i\prod_{r=1}^{g+3}e_r^{b_{ir}}\in \Lambda_G$, which are defined by

$$
\left(\rule{0cm}{0.5cm}\quad b_{ir} \quad \right) = - \left(\begin{array}{cc}
A\\
\,
A_T
\end{array}\right) \left(\rule{0cm}{0.5cm} \quad A^{-1} \quad \right) = - \left(\begin{array}{cc}
\op{Id}\\
\,
A_TA^{-1}
\end{array}\right).
$$

In particular we have $\wh e_i = 1$ for $1 \leq i \leq g+3$. We claim $(\A_G)_0$ is equal to the algebra of Laurent polynomials in  $\{\wh e_{g+4}, \ldots, \wh e_{3g+3}\}$, which is equivalent to the claim that $Q_T|_{(\A_G)_0} :(\A_G)_0  \lr (\A_T)_0$ is an isomorphism. First, we note that $\wh e_i \in (\A_G)_0$, since

$$
\p(\wh e_i) = t^{\beta(e_i) + \sum_r b_{ir} \beta(e_r)} \wh e_i,
$$
and
$$
\left(\begin{array}{cc}
\sum_r b_{1r}\beta(e_r)\\
\vdots\\
\sum_r b_{(3g+3)r}\beta(e_r)
\end{array}\right) = - \left(\begin{array}{cc}
A\\
\,
A_T
\end{array}\right) \left(\rule{0cm}{0.5cm} \quad A^{-1} \quad \right) \left(\rule{0cm}{0.5cm} \quad A \quad \right) \left(\begin{array}{cc}
\lambda_0\\
\vdots \\
\lambda_{g+2}
\end{array}\right) = -\left(\begin{array}{cc}
\beta(e_1)\\
\vdots \\
\beta(e_{3g+3})
\end{array}\right).
$$

To see that every element of $(\A_G)_0$ is a Laurent polynomial in  $\{\wh e_{g+4}, \ldots, \wh e_{3g+3}\}$, first note that $(\A_G)_0$ consists of linear combinations of monomials in $(\A_G)_0$, since $\Z^{g+3}$ acts multiplicatively. We identify the multiplicative group of monomials in $\wt \A_G[t^{\pm 1}]$ with the additive group $\Z^{3g+4}$ by identifying $t^{k_0} e_1^{k_1}\ldots e_{3g+3}^{k_{3g+3}}$ with $(k_0, k_1, \ldots, k_{3g+3})$. The group $\Z^{g+3}$ is acting linearly on this space, where the element $\p$ acts by
$$
\p = \left( \begin{array}{ccccc}
1 & \beta(e_1) & \ldots & \beta(e_{3g+2}) & \beta(e_{3g+3})\\
0 & 1 & \ldots & 0 & 0\\
\vdots & \vdots & \ddots & \vdots & \vdots \\
0 & 0 & \ldots & 1 & 0 \\
0 & 0 & \ldots & 0 & 1
\end{array} \right).$$
We immediately see then that the monomials in $(\A_G)_0[t^{\pm 1}]$ are identified with the orthocomplement of the linear subspace $S = \left\lbrace (0, \beta(e_1), \ldots, \beta(e_{3g+3}): (\lambda_0, \ldots, \lambda_{g+2}) \in \Z^{g+3}\right\rbrace$. Since the matrix $\left(\begin{array}{cc}
A\\
\,
A_T
\end{array}\right)$ is full rank, we see that $\op{rank}(S) = g+3$, so its orthocomplement has rank $2g+1$. Furthermore every monomial in $\{t, \wh e_{g+4}, \ldots, \wh e_{3g+3}\}$ is primitive in $(\wt \A_G)_0[t^{\pm 1}]$, so $(\A_G)_0[t^{\pm 1}] / \{t, \wh e_{g+4}, \ldots, \wh e_{3g+3}\}$ cannot have torsion. This completes the proof of $(\A_G)_0 \cong (\A_T)_0$.

For grading $1$, we define the $g+2 \x g+3$ matrix $A_F$ by
$$
A_F = \left( \begin{array}{ccccc}
1 & 2 & 0 & \ldots & 0\\
1 & 0 & 2 & \ldots & 0\\
\vdots & \vdots & \vdots & \ddots & \vdots \\
1 & 0 & 0 & \ldots & 2\\
\end{array} \right)$$
which describes how $\Z^{g+3}$ acts on faces. Let $\mu_{jr}$ be the terms of $-A_F A^{-1}$, and define $\wh f_j = f_j \prod_{r=1}^{g+3}e_r^{\mu_{jr}}$. It follows that $\p(\wh f_j) = \wh f_j $ for any $\p \in \Z^{g+3}$. This immediately implies that $\{\wh f_j\}$ is a basis for $(\A_G)_1$ over $(\A_G) _0$. Finally, a basis for $(\A_G)_2$ consists of $\wh x = x$, $\wh z = z$, and $\wh y = y \prod_{r=1}^{g+3}e_r^{\eta_r}$, where $\{\eta_r\}$ forms the first row of $A^{-1}$.
\end{proof}

%---------------------------------------------------------------
%---------------------------------------------------------------
%---------------------------------------------------------------

\appendix
\maketitle

\section{Augmentations are Colorings}
\begin{center}
{\footnotesize KEVIN SACKEL}\daggerfootnote{Massachusetts Institute of Technology, Department of Mathematics, 77 Massachusetts Avenue Cambridge, MA 02139, USA.} \label{app:kevin}\\
\end{center}

\subsection{Introduction}
In this appendix, we prove Theorem \ref{thm:augs_colors}, Theorem \ref{thm:main kevin} in the body of the paper. The motivation for this theorem is the expected correspondence between augmentations of the Legendrian dg-algebra and the moduli space of rank-1 constructible sheaves with singular support along the Legendrian. This correspondence was originally conjectured in \cite{STZ} and proved for Legendrian knots in \cite{NRSSZ}. With this philosophy in mind, the cubic planar graphs considered here correspond to the Legendrian surfaces considered in \cite{TZ}, face colorings up to reparametrization correspond to sheaves, and the invariant dg-algebra constructed in the main body of the text is expected to be the Legendrian dg-algebra in the sense of symplectic field theory \cite{EES, EGH}.

For the remainder of this appendix, the reader may disregard the motivation entirely and view the main theorem as purely combinatorial in nature. Let $\F$ be an underlying ground field and all graphs $G$ considered here will be cubic planar, and for simplicity, bridgeless. As noted in Remark \ref{rmk:bridges}, this last assumption can be dropped in the statement of the theorem.

\begin{definition} An \textbf{augmentation} of a dg-algebra $(\A,\dd)$ is a morphism dg-algebra map
$$\epsilon \colon (\A,\dd) \rightarrow (\F,0),$$
where $(\F,0)$ is the dg-algebra given by $\F$ concentrated in degree zero.
\end{definition}

Note that in order to define an augmentation, it suffices to specify $\epsilon$ on some set of generators $\mathcal{G} \subset \A_0$, which we assume is finite. Hence, one can identify $\mathrm{Aug}(\A,\dd;\F)$ as a subspace of the affine space $\F^{\mathcal{G}}$. Suppose now we are in a particularly nice situation, where $\A_i = 0$ for $i < 0$. Then an element of $\F^{\mathcal{G}}$ defines an augmentation precisely when it satisfies:
\begin{itemize}
	\item[-] The algebraic relations imposed by the equations defining $\A_0$ as an algebra.
	\item[-] The algebraic relations imposed by the equations coming from $\dd\A_1$. In particular, if $\A_1$ is finitely generated as an $\A_0$-module, then one can specify equivalent data by a finite list of relations.
\end{itemize}
If furthermore $\A_0$ is commutative, then these relations are all just polynomials in the generators of $\mathcal{G}$. Hence, $\mathrm{Aug}(\A,\dd;\F) \subset \F^{\mathcal{G}}$ forms a variety given by the corresponding ideal.

\begin{definition}
In the set-up just described, we shall refer to $\mathrm{Aug}(\A,\dd;\F) \subset \F^{\mathcal{G}}$ as the \textbf{augmentation variety}.
\end{definition}

\begin{remark}
Under these nice hypotheses, we are thinking of $\mathrm{Aug}(\A,\dd;\F)$ as a variety, and so in cases when $\F \neq \overline{\F}$, there may be more structure than just the underlying set of augmentations. In addition, note that we obtain an isomorphic variety if we choose a different set of generators $\mathcal{G}' \subset \mathcal{A}_0$, since we have rational morphisms $\F^{\mathcal{G}} \rightarrow \F^{\mathcal{G}'}$ and $\F^{\mathcal{G}'} \rightarrow \F^{\mathcal{G}}$ given by writing the generators of one set in terms of the other. The composition $\F^{\mathcal{G}} \rightarrow \F^{\mathcal{G}'} \rightarrow \F^{\mathcal{G}}$ is the identity on the variety given by the algebraic relations coming from $\A_0$, a fortiori on the augmentation variety. So these varieties are indeed isomorphic.
\end{remark}

One would like to understand the augmentations of the dg-algebras described in the main body of this paper. In the case of the dg-algebra of binary sequences for the graph $G$, one can take as a generating set $\mathcal{G} := \{e_1,e_1^{-1},\ldots,e_{3g+3},e_{3g+3}^{-1}\}$, where we have once and for all fixed an enumeration on and orientation of the edges. The algebraic relations (aside from commutativity) imposed by $\A_0$ are simply that $e_ie_i^{-1} = 1$, and so one could also think about our augmentation variety as living in $(\F^*)^{3g+3}$ instead of $\F^{6g+6}$.

\begin{definition} The variety $\widetilde{X_G(\F)} := \mathrm{Aug}(\widetilde{\A}_G,\widetilde{\dd}_G;\F)$ is given as
$$\left\{(\lambda_1,\ldots,\lambda_{3g+3}) \in (\F^*)^{3g+3}; \wt\dd f_j = \sum_{v \in f_j} H(\tau_{f_j}(v)) = 0 \text{ for all }j, \mathrm{where~}e_i=\lambda_i\right\}$$
and is called the \textbf{augmentation variety of binary sequences of $G$}.
\end{definition}

\begin{definition} For a basis $(v_0,T)$ of $G$, the variety $X_{T}(\F) := \mathrm{Aug}(\A_T,\dd_T;\F)$ is given as
$$\left\{(\lambda_1,\ldots,\lambda_{3g+3}) \in \widetilde{X_G(\F)}; \lambda_i = 1 \mathrm{~when~} e_i \notin T\right\}$$
and is called the \textbf{$T$-versal augmentation variety of $G$}.
\end{definition}

\begin{definition}
The variety $X_G(\F) := \mathrm{Aug}(\A_G,\dd;\F)$ is called the \textbf{invariant augmentation variety of $G$}.
\end{definition}

The invariant dg-algebra $(\A_G,\dd)$ embeds in $(\wt{A}_G,\wt{\dd}_G)$, so this induces a map $\wt{X_G(\F)} \rightarrow X_G(\F)$ on augmentation varieties. In addition, since $(\A_G,\dd) \simeq (\A_T,\dd_T)$, we have a canonical isomorphism $X_G(\F) \simeq X_T(\F)$. In the following, we shall need to understand the projection map given by the composition $\wt{X_G(\F)} \rightarrow X_G(\F) \xrightarrow{\sim} X_T(\F)$. The proof of Theorem \ref{thm:basis inv} implies that this morphism is described by sending an element $\vec{\lambda} \in \wt{X_G(\F)}$ to the element $\vec{\lambda}' \in X_T(\F)$ with, in the same notation as that proof,
$$\lambda'_k := \lambda_k \cdot \prod_{j=1}^{g+3} \lambda_j^{b_{kj}}$$
for each $k$ (and in particular $\lambda'_k = 1$ for edges not in $T$, i.e. for $k=1,\ldots,g+3$).

On the other hand, one can consider colorings of the faces of $G$ by elements of $\F\PP^1$. We think of this dually as a vertex coloring of the dual graph $G^*$.

\begin{definition}
The \textbf{pre-chromatic variety of $G^*$} is
$$\widetilde{\chi_{G^*}(\F\PP^1)} = \left\lbrace (\kappa_0, \ldots, \kappa_{g+2}) \in (\F\PP^1)^{g+3}; \kappa_i \neq \kappa_j \text{ if } f_i \text{ is adjacent to } f_j\right\rbrace$$
\end{definition}

On this variety, there is a natural action of $\op{PSL}_2(\F)$, given by simultaneous action on each $\F\PP^1$-coordinate.

\begin{definition}
The \textbf{chromatic variety of $G^*$} is
$$\chi_{G^*}(\F\PP^1) = \widetilde{\chi_{G^*}(\F\PP^1)} / \op{PSL}_2(\F),$$
\end{definition}

This is a smooth quotient, and the chromatic variety is isomorphic to the corresponding slice of $\wt{\chi_{G^*}(\F\PP^1)}$ in which we fix $(\kappa_i,\kappa_j,\kappa_k) = (0,1,\infty)$ around a given vertex.

\begin{thm} \label{thm:augs_colors}
For any field $\F$, there is a canonical isomorphism $\Phi \colon X_G(\F) \rightarrow \chi_{G^*}(\F\PP^1)$ as varieties over $\F$ (not just as sets of points).
\end{thm}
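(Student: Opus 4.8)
The plan is to build $\Phi$ from an explicit dictionary between edge-augmentations and face-colorings, and to match the $\op{PSL}_2(\F)$-action on colorings with the torus $\Z^{g+3}$-action on edges (Proposition \ref{prop:action}) that is quotiented out to form $(\A_G,\dd)$. Since invariant elements are awkward to handle directly, I would realize both sides as honest affine slices: on the algebra side I use the isomorphism $(\A_G,\dd)\cong(\A_T,\dd_T)$ of Theorem \ref{thm:basis inv}, so that $X_G(\F)\cong X_T(\F)$; on the coloring side I use the slice of $\widetilde{\chi_{G^*}(\F\PP^1)}$ in which the three faces around a chosen base vertex are normalized to $(0,1,\infty)$, which represents $\chi_{G^*}(\F\PP^1)$. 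Comparing the two slices by explicit rational formulas will exhibit $\Phi$ as an isomorphism of varieties over any $\F$, not merely a bijection on points.

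The core computation lives in the inverse map $\Phi^{-1}$, which sends a coloring $\{\kappa_a\}_{a}$ to the edge values $\lambda_e=(\kappa_a-\kappa_b)^{-1}$, where $f_a,f_b$ are the two faces adjacent to $e$ (with the sign fixed by the edge orientation). The point to verify is that this lands in $\widetilde{X_G(\F)}$, i.e.\ that $\wt\dd f_j=\sum_{v\in f_j}(-1)^{r_v}e_ne_me_k^{-1}=0$ for every face, in the notation of Definition \ref{def:total A}. Listing the neighbors of $f_j$ in cyclic order as $f_{a_1},\ldots,f_{a_d}$ and setting $u_i=\kappa_j-\kappa_{a_i}$, the two boundary edges $e_n,e_m$ at the $i$-th vertex contribute $u_i^{-1},u_{i+1}^{-1}$ and the opposite edge contributes $e_k^{-1}=u_{i+1}-u_i$, so each vertex term collapses to $\pm(u_i^{-1}-u_{i+1}^{-1})$. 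The face relation is then a telescoping sum around the boundary cycle of $f_j$ and vanishes identically. Properness of the coloring, $\kappa_a\neq\kappa_b$ across each edge, is exactly the condition $\lambda_e\in\F^*$, so $\Phi^{-1}$ is well defined.

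I would then check equivariance: applying a Möbius transformation $\kappa\mapsto(a\kappa+b)/(c\kappa+d)$ multiplies each difference $\kappa_a-\kappa_b$ by $(ad-bc)\big((c\kappa_a+d)(c\kappa_b+d)\big)^{-1}$, hence rescales $\lambda_e$ by a factor that depends only on the two adjacent faces $f_a,f_b$ — precisely the way the generators $\p_f$ and $\p_0$ of the $\Z^{g+3}$-action rescale the edges of a face. Consequently the composite $\widetilde{\chi_{G^*}(\F\PP^1)}\to\widetilde{X_G(\F)}\to X_G(\F)$ is constant on $\op{PSL}_2(\F)$-orbits and descends to $\chi_{G^*}(\F\PP^1)\to X_G(\F)$. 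For the map $\Phi$ itself I would start from an augmentation in the tree slice $X_T(\F)$, normalize the three colors around $v_0$ to $(0,1,\infty)$, and propagate by $\kappa_b=\kappa_a-\lambda_e^{-1}$ across each edge; path-independence of this propagation is exactly the telescoping face relations read as a cocycle condition, with the spanning tree $T$ providing a canonical order, and $\lambda_e\neq 0$ again yielding properness. This gives a two-sided inverse built from rational expressions regular on the slices, proving $\Phi$ is an isomorphism of $\F$-varieties.

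The hard part will be the global sign bookkeeping: arranging the orientation of each boundary cycle and the factors $(-1)^{r_v}$ and $e^{\pm1}$ from $H(\tau)$ so that every face relation degenerates to a telescoping sum with a consistent sign pattern, and symmetrically ensuring the propagation step is monodromy-free across all of $G$ rather than only locally. A secondary subtlety is the projective bookkeeping at $\infty$: the formula $\lambda_e=(\kappa_a-\kappa_b)^{-1}$ must be read in homogeneous coordinates so that both directions remain regular morphisms and the $\op{PSL}_2(\F)$-quotient is geometric — which is exactly the reason for reducing to the explicit slices $X_T(\F)$ and the $(0,1,\infty)$-normalization at the outset.
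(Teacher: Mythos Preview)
Your backward map and its telescoping verification are sound: with $\lambda_e=(\kappa_a-\kappa_b)^{-1}$ the face relation $\sum_v (-1)^{r_v}e_ne_me_k^{-1}=0$ indeed collapses to $\sum_i(u_i^{-1}-u_{i+1}^{-1})=0$. The genuine gap is in the forward direction. Your propagation rule $\kappa_b=\kappa_a-\lambda_e^{-1}$ only reconstructs a coloring when the $\lambda_e$ are \emph{literally} the values produced by your backward map, i.e.\ on its image inside $\widetilde{X_G}$. It does not make sense on the $T$-slice: for every $e\notin T$ you have $\lambda_e=1$, so your rule forces $\kappa_a-\kappa_b=1$ across each non-tree edge. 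At the base vertex $v_0$ all three incident edges lie outside $T$, and your normalization $(0,1,\infty)$ already violates this (there is no $\kappa$ with $\infty=\kappa-1$). The underlying issue is that the two slices you chose are sections for different group actions: the $T$-slice is a section for the full $(\F^*)^{g+3}$-torus on $\widetilde{X_G}$, whereas the image of your backward map is only a section for the three-dimensional $\op{PSL}_2$ inside it. The difference $\kappa_a-\kappa_b$ is not torus-invariant, so it cannot be read off from a $T$-slice representative.

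The paper's remedy is to build $\Phi$ from torus-invariant data: the ratio $H(\tau_f(v))/H(\tau_f(w))$ of two consecutive vertex terms around a face is unchanged by all of $\p_0,\p_{f_1},\ldots,\p_{f_{g+2}}$, and it is matched with the cross ratio of the four surrounding colors (Equation \ref{eqn:cross}). Propagation then determines one new color from \emph{three} known ones, and global consistency of this $3$-to-$1$ propagation is established by a homotopy argument in $G^*$ (Lemma \ref{lem:aug_to_color}). Your difference formula does reappear, but only for the inverse map $\wt\Psi_x$, defined on the affine patches $U_x=\{\kappa_j\neq x\}$; there it produces a point of $\widetilde{X_G}$, which is then projected to $X_T$ via $\pi$, with injectivity of the cross-ratio map $\Phi$ used to glue the patches. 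So your $\Psi$ is essentially right, but $\Phi$ needs the cross-ratio formulation to be well-defined on $X_T$.
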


\begin{remark} \label{rmk:bridges} For the case when $G$ has a bridge, the theorem is still valid, since Remark \ref{rmk:bridgeless} implies $\wt{X_G(\F)}$ is empty, while $\chi_{G^*}(\F\PP^1)$ is also clearly empty since some face is adjacent to itself.
\end{remark}

\begin{remark}
In the case of $\F = \F_3$, rational points of the augmentation variety correspond to four-colorings of the faces of $G$ (up to reparametrization), and so the famous Four Color Theorem (which can easily be reduced to studying bridgeless cubic planar graphs) is equivalent to the existence of a rational point on $\wt{X_G(\F_3)}$ when $G$ is bridgeless. If we have such a rational point, then it yields a vertex coloring by elements $\pm 1$ such that the sum around any face is $0$ mod $3$. In fact, this stronger statement is also equivalent to the Four Color Theorem, and was well known prior to the famous and controversial computer-based proof of \cite{AH,AHK}, in fact as far back as 1898 \cite{He}. Hence, one expects no new purely combinatorial insight into the Four Color Theorem.
\end{remark}

The proof of Theorem \ref{thm:augs_colors} occupies the following two subsections. In Subsection \ref{subsect:examples}, we provide two examples of our isomorphism, and finally, in Subsection \ref{subsect:numtheory}, we describe an alternate viewpoint on the viewpoint which leads to a combinatorial/number-theoretic identity, Corollary \ref{cor:mu_col}.

{\bf Acknowledgements}: K.~Sackel is grateful Bjorn Poonen for pointing towards EGA. This work is supported by an NSF Graduate Research Fellowship.

\subsection{Construction of the morphism}

We shall proceed by constructing a morphism of the form $\widetilde{\Phi} \colon \widetilde{X_G(\F)} \rightarrow \chi_{G^*}(\F\PP^1)$ which restricts to the desired morphism $\Phi \colon X_T(\F) \rightarrow \chi_{G^*}(\F\PP^1)$. We will then show that $\Phi$, considered instead as a morphism $X_G(\F) \rightarrow \chi_{G^*}(\F\PP^1)$, does not depend on the choice of $T$.

Consider an element $(\lambda_1,\ldots,\lambda_{3g+3}) \in \widetilde{X_G(\F)}$. For each choice of face-vertex adjacency $(f,v)$, this defines an element of $\F^*$ given by $(-1)^{r_v} \lambda_m\lambda_n\lambda_k^{-1}$, the image of $H(\tau_f(v))$ under the augmentation, which we shall also denote by abuse of notation as $H(\tau_f(v))$. By definition, the sum of these values around the vertices of a given face is $0$. The relevant quantity with respect to defining the coloring is not actually the precise value of each of these quantities, but the ratios of the quantities to each other around a face, which will be the cross ratio of the labellings in the coloring around an edge, as we now describe.

Suppose we consider two vertices $v,w$ connected by an edge $p$, with the other edges around $v$ given by $q,r$ and around $w$ as $s,t$, such that $q,r,s,t$ are labelled clockwise. Label the faces clockwise by $A,B,C,D$ such that $q$ separates $A$ and $B$ (and so $r$ separates $B$ and $C$ and so on). See Figure \ref{fig:Edge_Setup}. Then we define the coloring up to $\op{PSL}_2$ by the condition:
\begin{equation} \label{eqn:cross}
\frac{(-1)^{r_v} \lambda_p\lambda_q\lambda_r^{-1}}{(-1)^{r_w} \lambda_p\lambda_t\lambda_s^{-1}} = (-1)^{r_v+r_w} \frac{\lambda_q\lambda_s}{\lambda_r\lambda_t} =: -\frac{(\kappa_A-\kappa_B)(\kappa_C-\kappa_D)}{(\kappa_B-\kappa_C)(\kappa_D-\kappa_A)}
\end{equation}
We shall often be agnostic towards the precise signs in this equation, instead writing $\pm$ to de-clutter the notation.

\begin{figure}[h]
\centering
\includegraphics[scale=0.55]{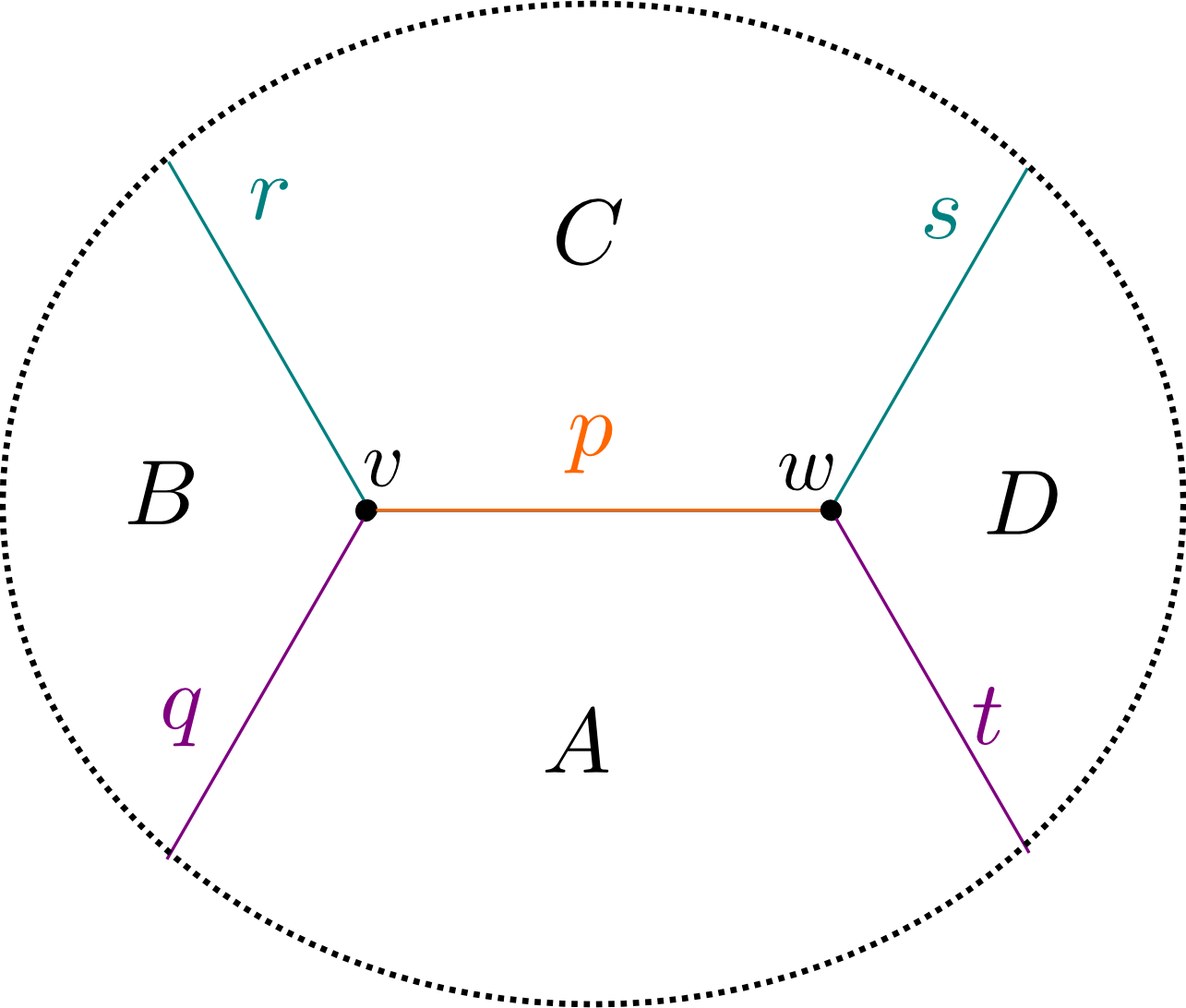}
\caption{The set-up for Equation \ref{eqn:cross}.}
\label{fig:Edge_Setup}
\end{figure}

If one of the $\kappa$ values is $\infty$, then Equation \ref{eqn:cross} is still valid in the following sense. Suppose that $\kappa_A = \infty$. Then we consider $(\kappa_A - \kappa_B)/(\kappa_D - \kappa_A) = -1$, leaving 
$$\pm \frac{\lambda_q\lambda_s}{\lambda_r\lambda_t} = \frac{\kappa_C-\kappa_D}{\kappa_B-\kappa_C}$$
If both $\kappa_B = \kappa_D = \infty$, then similarly, we take
$$\pm \frac{\lambda_q\lambda_s}{\lambda_r\lambda_t} = -1.$$
This interpretation of the right hand side yields the required $\op{PSL}_2$-independence - this quantity is often called the \textbf{cross ratio}. In particular, up to $\op{PSL}_2$ reparametrization, we can set $\kappa_A$, $\kappa_B$, and $\kappa_C$ to be any choice of distinct colors, and a choice of values for the $\lambda_i \in \F^*$ will uniquely determine $\kappa_D$ (different from $\kappa_A$ and $\kappa_C$ since the cross ratio is in $\F^*$). In addition, both sides of the equation are invariant under rotating the picture by 180 degrees, and so it is equivalent to determine $\kappa_D$ from $\kappa_A$, $\kappa_B$, and $\kappa_C$ as it is to determine $\kappa_B$ from $\kappa_A$, $\kappa_C$, and $\kappa_D$.

To be even more concrete, in practice, one can set the color of some given face to be $\infty$ up to $\op{PSL}_2$ reparametrization, and the ratios of the $\pm \lambda_m\lambda_n\lambda_k^{-1}$ values around the vertices are precisely the ratios of the differences between adjacent colors around the face. The fact that the sum of the $\pm \lambda_m\lambda_n\lambda_k^{-1}$ around a given face equals $0$ is precisely the fact that when we go around a face, we get back to the same color. An example of this picture around a triangular face is given in Figure \ref{fig:Local_Picture}.

\begin{figure}[h]
\centering
\includegraphics[scale=0.55]{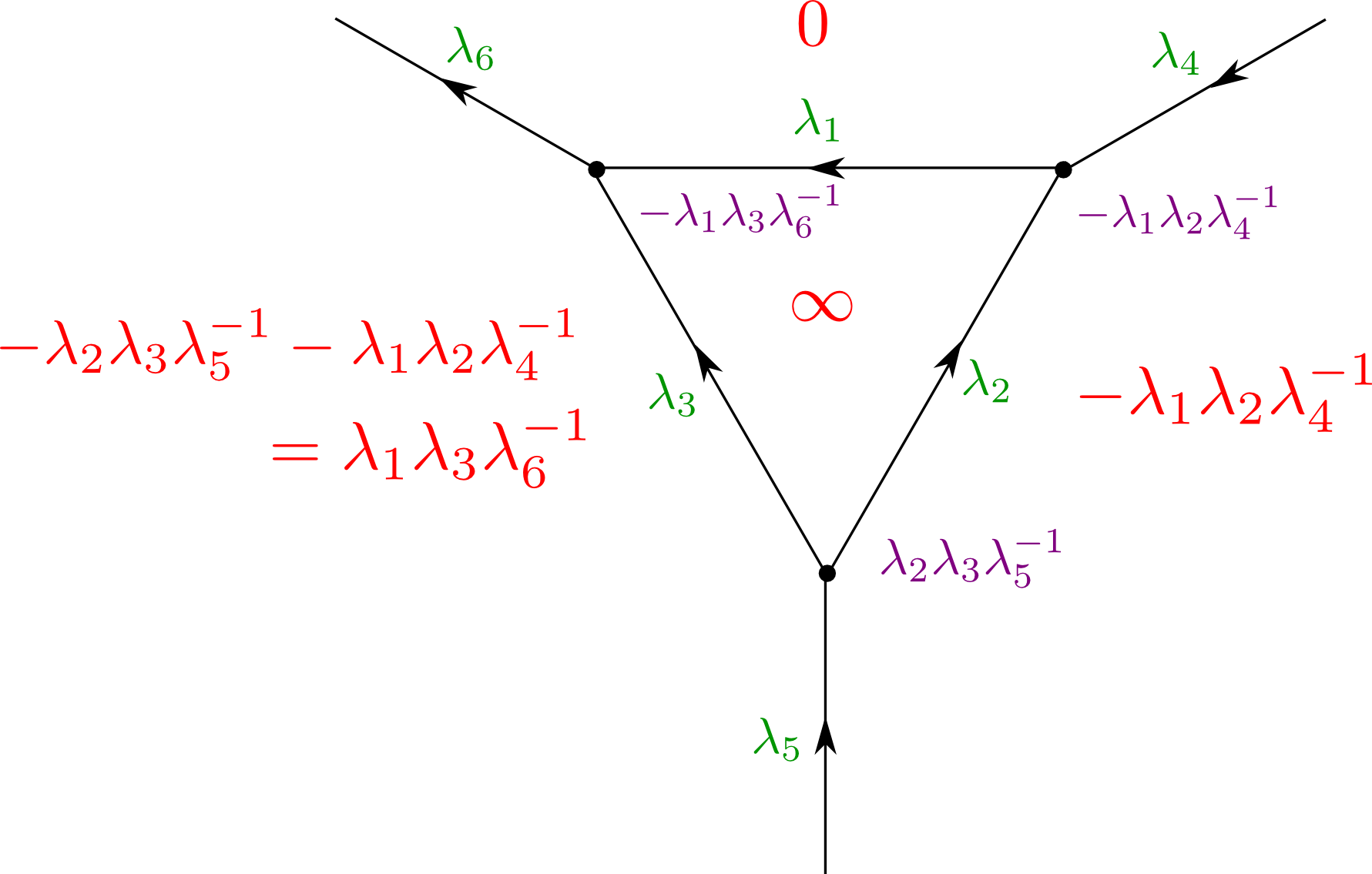}
\caption{An example of the coloring determined by a solution to the augmentation equations, locally around a triangular face. The arrows represent the chosen orientation of the graph. The green $\lambda_i$ values determine the coordinates in the augmentation variety. The purple values are the values of $\pm \lambda_m\lambda_n\lambda_k^{-1}$, with choice of sign determined by the orientation. And the red values up are, up to $\op{PSL}_2$ reparametrization, the colors of the faces determined by the augmentation.}
\label{fig:Local_Picture}
\end{figure}

If we know the distinct colors $\kappa_A,\kappa_B,\kappa_C$ around an edge, then we know the last color $\kappa_D$. So we can iteratively choose some edges where three of the colors around them are known, and fill in the last color. For example, suppose we take the dual graph $G^*$. The faces of $G$ correspond to the vertices of $G^*$, and so if in the dual graph we know the colors around the vertices of a triangle, then we can find the color of the last vertex of an adjacent triangle. This propagation procedure is depicted in Figure \ref{fig:Propagation}.

\begin{figure}[h]
\includegraphics[scale=0.85]{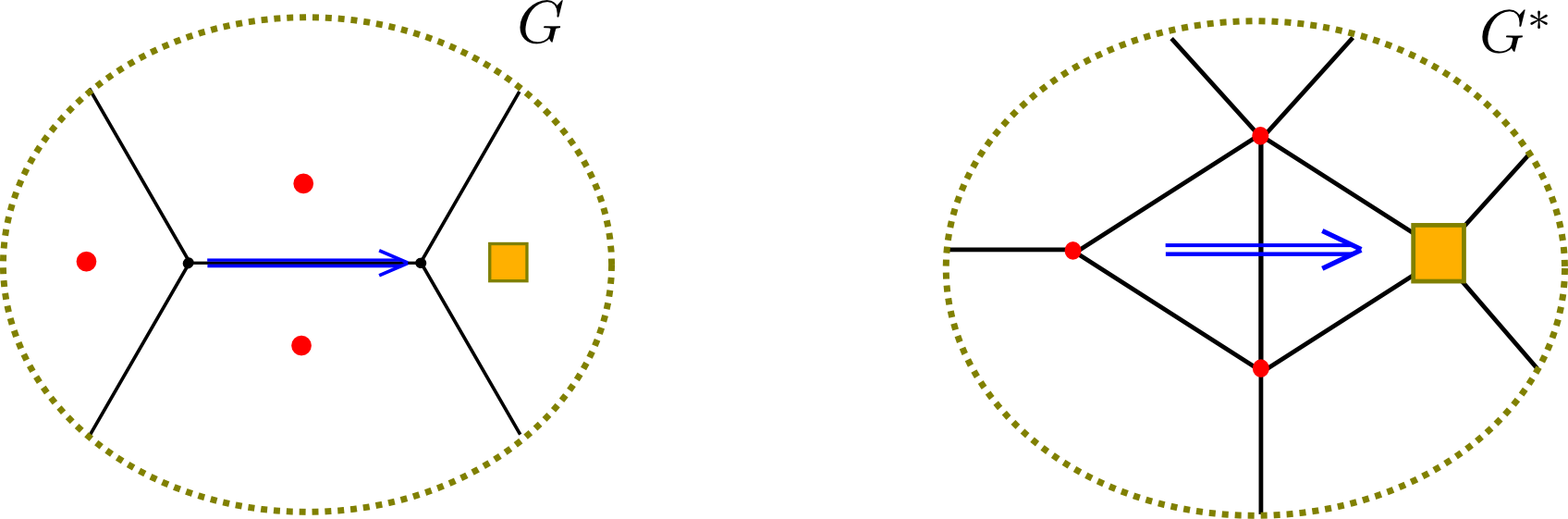}
\centering
\caption{Here we depict propagation through an edge, both in $G$ and $G^*$. The blue arrow is the direction of propagation, and is independent of the orientation choice along the edge. In both drawings, we are filling in the color in the orange square based on the colors at the red circles.}
\label{fig:Propagation}
\end{figure}

This gives a method from going from an augmentation in the augmentation variety of binary sequences to colorings. Start by labelling three adjacent faces around a fixed vertex of $G$ with the values $0,1,\infty$ (we could also use any other three distinct values in $\F \PP^1$, though we stick with these for concreteness). (We are assuming $G$ is bridgeless, so there is no issue with a face being adjacent to itself.) Then, using the propagation procedure described above, we can one-by-one fill in labels for all of the other faces, hence obtaining a labelling of the faces of $G$ with colors in $\F\PP^1$. It is not immediately obvious that this is a coloring, since it could be the case that when we propagated, we ended up with two adjacent faces having the same color. It is also not immediately clear that the labelling is well-defined up to $\op{PSL}_2(\F)$ reparametrization if we follow different directions of propagation to fill in our labels. That these two possibilities do not occur is implied by the following lemma.

\begin{lemma} \label{lem:aug_to_color} The labelling of faces with elements of $\F\PP^1$ obtained by this method satisfies Equation \ref{eqn:cross} around every edge. In particular, this defines a legitimate coloring, in that no two neighboring faces are labelled by the same color.
\end{lemma}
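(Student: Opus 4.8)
The plan is to reduce the global statement to a local computation around a single face, exploiting the $\op{PSL}_2(\F)$-invariance of the cross ratio, and then to promote the local consistency to global well-definedness by a planarity argument. The distinctness of neighbouring colors will come for free at the end, since the right-hand side of Equation \ref{eqn:cross} is forced to be a finite nonzero element of $\F^*$. Fix a face $f$ with boundary vertices $v_1,\ldots,v_d$ in cyclic order, let $g_i$ be the face across the $i$-th boundary edge $e_i=(v_i,v_{i+1})$, so that $f,g_{i-1},g_i$ are the three faces meeting at $v_i$. Using $\op{PSL}_2(\F)$ I would normalize $\kappa_f=\infty$. With this normalization the numerator and denominator of the left-hand side of Equation \ref{eqn:cross} for the edge $e_i$ are exactly $H(\tau_f(v_i))$ and $H(\tau_f(v_{i+1}))$ (the common $\lambda_p$-factor cancels), while the right-hand side degenerates, as explained before the statement, to a ratio of differences of neighbouring colors. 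Concretely, the equation around $e_i$ becomes
\[
\frac{H(\tau_f(v_i))}{H(\tau_f(v_{i+1}))}=\frac{\kappa_{g_{i-1}}-\kappa_{g_i}}{\kappa_{g_i}-\kappa_{g_{i+1}}},
\]
which I would rephrase as the assertion that the quantity $(\kappa_{g_{i-1}}-\kappa_{g_i})/H(\tau_f(v_i))$ is independent of $i$.

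This is the heart of the argument. Writing $\delta_i=\kappa_{g_{i-1}}-\kappa_{g_i}$ and denoting the common value by $c_f\in\F^*$, the relation $\delta_i=c_f\,H(\tau_f(v_i))$ holds simultaneously for all boundary edges of $f$, and summing around the face gives
\[
\sum_{i=1}^{d}\delta_i=c_f\sum_{v\in f}H(\tau_f(v))=c_f\cdot\wt\dd f.
\]
Since the left-hand side telescopes to $\kappa_{g_0}-\kappa_{g_d}=0$, the labelling closes up consistently around $f$ precisely when $\wt\dd f=0$, which is exactly the augmentation condition cutting out $\widetilde{X_G(\F)}$. Thus consistency of the propagation around each individual face is \emph{equivalent} to the defining equations of the augmentation variety, and along the way the displayed relations are precisely Equation \ref{eqn:cross} around every boundary edge of $f$.

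To pass from local to global I would invoke planarity. The propagation moves between the triangles of $G^*$ (the vertices of $G$) across their shared edges, so its connectivity graph is $G$ itself and any closed propagation loop is a cycle of $G$. Embedding $G$ in $S^2$, the cycle space of $G$ is generated by the boundaries of its faces, so the face-by-face consistency just established (together with the $\op{PSL}_2$-invariance of Equation \ref{eqn:cross}, which lets us verify each face in the convenient gauge $\kappa_f=\infty$) forces the labelling to be globally well defined, independent of the order in which faces are filled in. Since every edge of $G$ lies on the boundary of a face, the relations verified above show that Equation \ref{eqn:cross} holds around every edge. Finally, for the distinctness statement: around each edge the right-hand side of Equation \ref{eqn:cross} equals $\pm\lambda_q\lambda_s/(\lambda_r\lambda_t)\in\F^*$, a finite nonzero value, and its finiteness and non-vanishing force the four pairs of faces meeting along the surrounding edges $q,r,s,t$ to receive distinct colors; as every adjacency of $G$ occurs as such a pair around some edge, no two neighbouring faces share a color and the labelling is a legitimate coloring.

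I expect the main obstacle to be the bookkeeping in the key proportionality step: correctly matching the factors $H(\tau_f(v_i))$ and the orientation signs $(-1)^{r_v}$ with the degeneration of the cross ratio at $\kappa_f=\infty$, and verifying that the common constant $c_f$ genuinely does not depend on $i$ (equivalently, that the ratios of consecutive $H$-values coincide with the ratios of consecutive color differences, and not their inverses). Once this local identity is pinned down, the global step is essentially formal, resting only on planarity and the generation of the cycle space by face boundaries.
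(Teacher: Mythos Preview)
Your approach is correct and actually tracks the informal explanation the paper gives \emph{before} the lemma (``the ratios of the $\pm\lambda_m\lambda_n\lambda_k^{-1}$ values around the vertices are precisely the ratios of the differences between adjacent colors around the face; the fact that their sum equals $0$ is precisely the fact that when we go around a face, we get back to the same color''). The paper's \emph{formal} proof, however, is packaged differently: it associates to each continuous path $\gamma$ in $S^2$ (avoiding the dual vertex of the outer face) a partially-defined labelling $L_\gamma$, checks that $L_\gamma$ is invariant under isotopies supported in the star of a single vertex of $G^*$, and then invokes contractibility of the path space. Your local step (consistency around a face $f$) is exactly dual to the paper's local step (isotopy through the star of the dual vertex $\wt f$), and your global step (planarity/cycle-space) is the homological shadow of the paper's global step (contractibility). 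Your version is more elementary and makes the role of the augmentation equation $\wt\dd f=0$ completely transparent; the paper's version is closer to a standard monodromy-of-a-flat-connection argument and handles the bookkeeping of which three colors are ``currently known'' more systematically via the function $L_\gamma$.

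One point to tighten: the holonomy of propagation is \emph{a priori} $\pi_1$-data, not $H_1$-data, so ``the cycle space of $G$ is generated by face boundaries'' is slightly weaker than what you need. The correct statement is that the bounded face boundaries \emph{normally generate} $\pi_1(G)$ (equivalently, attaching the bounded faces kills $\pi_1$, since the result is a disc); since conjugates of the identity are the identity, triviality on face boundaries then propagates to all closed walks. Also be aware, as you already flag, that the identification of $A,B,C,D$ with $f,g_{i-1},g_i,g_{i+1}$ and of $v,w$ with $v_i,v_{i+1}$ is sensitive to the clockwise convention; getting this wrong flips your proportionality to $H_i\delta_i=\mathrm{const}$, which would give the wrong closing-up condition $\sum 1/H_i=0$. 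With the paper's conventions the direction is indeed $\delta_i=c_f\,H(\tau_f(v_i))$, and your telescoping argument then goes through verbatim.
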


\begin{proof}
It suffices to prove that there exists a labelling of faces of $\F\PP^1$ satisfying Equation \ref{eqn:cross} around every edge. Then we note that since any other choice of propagation requires Equation \ref{eqn:cross} to hold along some subset of edges but that this coloring is uniquely determined by these conditions, the resulting labelling must be the same.

The way we go about this is to abstractify the way in which a face is labelled via some connected sequence of propagations. Let us formalize this in the following way. Suppose that we choose any vertex $v$ and any face $f$. This corresponds in the dual graph $G^*$, which we embed into $S^2$, to a dual face $\wt v$ and a dual vertex $\wt f$. Further suppose that the vertices around $\wt v$ are labelled by $0,1,\infty$. Consider any path $\gamma \colon [0,1] \rightarrow S^2$ such that the following three conditions are satisfied:
\begin{itemize}
	\item[-] $\gamma(0)$ lies at some fixed interior point in $\wt v$
	\item[-] $\gamma(1)$ is in $\wt f$
	\item[-] $\gamma$ never passes through the vertex of $G^*$ corresponding to the face at infinity
\end{itemize}
Given such a path, we shall obtain a label for $\wt v$ by an element of $\F\PP^1$ essentially by keeping track of local labels for the vertices of $G^*$ which are near $\gamma(t)$. To state this precisely, we show that $\gamma$ defines a labelling function $L_{\gamma} \colon V(G^*) \times [0,1] \rightarrow \F\PP^1 \cup \{*\}$, where $V(G^*)$ is the vertex set of $G^*$. This $L_{\gamma}$ is precisely determined by the following conditions:
\begin{itemize}
	\item[(i)] We set the initial values $L_{\gamma}(w, 0) \in \{0,1,\infty,*\}$ is defined by the initial labelling of vertices around $\wt v$. That is, $L_{\gamma}(w,0)$ is $0,1,\infty$ for the three initially labelled vertices, and is $*$ on all other vertices.
	\item[(ii)] We set $L_{\gamma}$ equal to $*$ unless $w$ is near $\gamma(t)$ in the following sense:
	\begin{itemize}
		\item If $\gamma(t)$ is in the interior of a face of $G^*$ then there are three labels which are not $*$, which are the vertices around this face.
		\item If $\gamma(t)$ is in the interior of an edge of $G^*$ then there are four labels which are not $*$, which are the four vertices around this edge. In this case, we require the labels to satisfy Equation \ref{eqn:cross} with respect to the augmentation at this edge.
		\item If $\gamma(t)$ is at a vertex of $G^*$, then the only labels which are not $*$ are the vertex itself and all adjacent vertices. Furthermore, we require all of these labels to satisfy Equation \ref{eqn:cross} at all edges connecting the vertex $\gamma(t)$ to an adjacent vertex. The augmentation equations imply that such a labelling is possible.
	\end{itemize}
	\item[(iii)] Each $L_{\gamma}(w,\cdot) \colon [0,1] \rightarrow \F\PP^1 \cup \{*\}$ is locally constant on its restriction to the preimage of $\F\PP^1$.
\end{itemize}

To summarize, the first condition states that $L_{\gamma}$ has an initial value, while the second and third together imply that $L_{\gamma}$ only changes when $\gamma$ switches strata, and the corresponding labels we keep track of are completely determined by Equation \ref{eqn:cross}. In this way, $L_{\gamma}$ can be constructed and is uniquely defined by these conditions. We provide an example in Figure \ref{fig:Propagation_Path}.

\begin{figure}[h]
\includegraphics[scale=1.3]{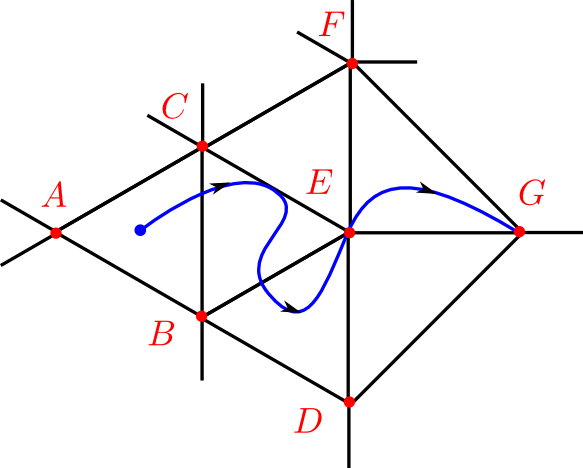}
\centering
\caption{In this figure, we consider the blue path $\gamma$, and suppose that $A$, $B$, and $C$ are labelled $0$, $1$, and $\infty$ respectively. Then when $\gamma$ hits segment $BC$, $L_{\gamma}$ also includes the label for $E$ such that Equation \ref{eqn:cross} is satisfied. When $\gamma$ then enters the interior of triangle $BCE$, it forgets the label of $A$. Continuing on, it momentarily includes a label for $F$ which it then forgets, then adds on a label for $D$, and then forgets the label for $C$. When $\gamma$ passes through $E$, it then extrapolates the labels for $B$, $D$, and $E$ into labels for also $C$, $F$, and $G$. Finally, it forgets everything except for $E$, $F$, and $G$, and then gains an extra label (out of the picture) at its terminal point. The associated value $L_{\gamma}(G,1)$ is the label we are interested in.}
\label{fig:Propagation_Path}
\end{figure}

So to each path $\gamma$ as above, we can associate to $\wt f$ the value $L_{\gamma}(\wt f,1) \in \F\PP^1$. We wish to show that this final label is invariant of $\gamma$. In order to do this, note that all isotopies of $\gamma$ are generated by small isotopies of $\gamma$ on some interval $[a,b]$ which completely lie in the star of some $w \in V(G^*)$ with $w$ not corresponding to the face at infinity of $G$. Under such an isotopy, $L_{\gamma}(\cdot,b)$ is preserved since one could keep track also of the labels for all the vertices around $w$ which would satisfy Equation \ref{eqn:cross} around all edges adjacent to $w$. Hence also $L_{\gamma}(\cdot,\geq b)$ is preserved since the construction of $L_{\gamma}$ forgets all labels which occur before time $t = b$. In particular, $L_{\gamma}(\wt f, 1)$ is always preserved by isotopy. Finally, the space of paths $\gamma$ considered is contractible, so this yields a well-defined label for $\wt f$.

One could then repeat these for all other faces $f$ to obtain a complete labelling depending on the value of $\gamma(1)$ being considered. So consider we take $\gamma$ ending at $\wt f$ and modify it to $\gamma'$ by adding an arc along the edge from $\wt f$ to an adjacent $\wt f'$. Then $L_{\gamma}(\wt f',1) = L_{\gamma'}(\wt f',1)$, so $\gamma$ itself encodes the same label for $\wt f'$. But we see that $L_{\gamma}(\cdot,1)$ consists of labels which satisfy Equation \ref{eqn:cross} at every edge around $\wt f$, and so the labelling we have obtained satisfies this equation across all edges.
\end{proof}

\begin{remark}
Since Equation \ref{eqn:cross} is satisfied around all of the edges of the face at infinity $f_{\infty}$, in fact this also proves that the equation $\sum_{v \in f_{\infty}} H(\tau_{f_{\infty}}(v)) = 0$ is also satisfied by elements in the augmentation variety.
\end{remark}

\begin{cor}
This yields a map of varieties $\Phi \colon X_T(\F) \rightarrow \chi_{G^*}(\F\PP^1)$, which is independent of the choices made in the propagation procedure (the starting vertex and the directions of propagation).
\end{cor}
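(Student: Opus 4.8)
The plan is to assemble the morphism $\Phi$ directly from Lemma~\ref{lem:aug_to_color} and then verify the two remaining independences together with the fact that $\Phi$ is algebraic. Fix once and for all a vertex $v_0$ of $G$ together with the three faces meeting it, and use the slice model of $\chi_{G^*}(\F\PP^1)$ in which these three colors are pinned to $(0,1,\infty)$. For each $\vec\lambda \in X_T(\F) \subseteq \widetilde{X_G(\F)}$, the propagation procedure of Lemma~\ref{lem:aug_to_color} produces a labelling of the faces satisfying Equation~\ref{eqn:cross} around every edge; since that lemma shows the labelling to be independent of the path $\gamma$, and hence of the directions of propagation, it yields a well-defined point of the slice, and therefore of $\chi_{G^*}(\F\PP^1)$. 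This defines $\Phi$ as a map of sets and already disposes of the dependence on propagation directions.

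First I would treat independence of the starting vertex, together with the auxiliary values used to pin the gauge. Suppose two runs of the procedure, starting from different vertices or different pinned triples, produce labellings $\kappa$ and $\kappa'$ of the faces; by Lemma~\ref{lem:aug_to_color} both satisfy Equation~\ref{eqn:cross} around every edge. The key observation is that any labelling satisfying Equation~\ref{eqn:cross} around every edge is determined, up to a unique element of $\op{PSL}_2(\F)$, by the three colors assigned to the faces around a single vertex: given those three colors the cross-ratio condition solves uniquely for the fourth color across each adjacent edge, and propagation over the connected graph fills in every remaining color. Applying the unique $\op{PSL}_2(\F)$ element carrying the three $\kappa'$-colors around $v_0$ onto the corresponding three $\kappa$-colors and invoking this uniqueness shows that $\kappa$ and $\kappa'$ lie in the same $\op{PSL}_2(\F)$-orbit. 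Hence they represent the same point of $\chi_{G^*}(\F\PP^1)$, and $\Phi$ is independent of all the propagation choices.

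It remains to upgrade $\Phi$ to a morphism of varieties, which I expect to be the main obstacle. Working in the pinned slice, each propagation step solves Equation~\ref{eqn:cross} for one new color $\kappa_D$ as a fractional-linear expression in the three already-known colors and the relevant Laurent monomial in the $\lambda_i$; in homogeneous coordinates on $\F\PP^1$ this is visibly a morphism. Composing the finitely many propagation steps needed to color all faces then exhibits $\Phi$ as a composite of morphisms into $(\F\PP^1)^{g+3}$. The one point requiring care is that this composite has no indeterminacy on $X_T(\F)$, and this is exactly what Lemma~\ref{lem:aug_to_color} guarantees: at every edge the three input colors are distinct and the resulting cross-ratio lies in $\F^*$, so the denominators appearing in the fractional-linear formulas never vanish and the image lands in $\widetilde{\chi_{G^*}(\F\PP^1)}$. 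Thus $\Phi \colon X_T(\F)\to\chi_{G^*}(\F\PP^1)$ is a morphism of varieties, independent of the propagation choices, as claimed.
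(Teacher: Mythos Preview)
Your argument is correct and follows essentially the same route as the paper: each propagation step is a rational function of the known colors and the $\lambda_i$, so the composite is a morphism, and independence of the propagation choices follows from Lemma~\ref{lem:aug_to_color} guaranteeing Equation~\ref{eqn:cross} at every edge. You supply more detail than the paper does---spelling out starting-vertex independence via the triply-transitive $\op{PSL}_2(\F)$-action and checking that the fractional-linear denominators never vanish---whereas the paper compresses all of this into two sentences, but the underlying logic is the same.
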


\begin{proof} Each time we propagate in our procedure, $\kappa_D$ is a rational function of $\kappa_A$, $\kappa_B$, and $\kappa_C$, so we do obtain a morphism of varieties $\wt{\Phi} \colon \widetilde{X_G(\F)} \rightarrow \chi_{G^*}(\F\PP^1)$. Precomposing with $X_T(\F) \hookrightarrow \wt{X_G(\F)}$ yields our morphism $\Phi \colon X_T(\F) \rightarrow \chi_{G^*}(\F)$. Independence of propagation is immediate from satisfying Equation \ref{eqn:cross} around every edge.
\end{proof}

\begin{lemma} The morphism $\Phi \colon X_T(\F) \rightarrow \chi_{G^*}(\F\PP^1)$ is injective as a map of sets.
\end{lemma}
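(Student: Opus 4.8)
The plan is to show that the coloring determines the augmentation completely once the normalization $\lambda_i = 1$ for $e_i \notin T$ is imposed. Suppose $\vec\lambda, \vec\lambda' \in X_T(\F)$ satisfy $\Phi(\vec\lambda) = \Phi(\vec\lambda')$. First I would use the $\op{PSL}_2(\F)$-freedom to normalize the two colorings so that they agree literally as elements of $(\F\PP^1)^{g+3}$, not merely up to reparametrization. Writing $\nu_e := \lambda_e/\lambda'_e \in \F^*$ for each edge, the goal becomes to prove $\nu_e = 1$ for all $e$; note $\nu_e = 1$ already holds for $e \notin T$ by the defining normalization of $X_T(\F)$.

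The first key step is to extract relations on the $\nu_e$ from the equality of colorings. As explained around Equation \ref{eqn:cross} and Figure \ref{fig:Local_Picture}, for each face $f$ the quantities $H(\tau_f(v))$, as $v$ ranges over the vertices of $f$, are determined by the coloring up to a single common scalar $c_f \in \F^*$, since the coloring only sees the ratios of consecutive color-differences around $f$. Consequently $H(\tau_f(v))/H'(\tau_f(v))$ is independent of $v \in f$; call it $d_f \in \F^*$. Since $H(\tau_f(v)) = (-1)^{r_v}\lambda_m\lambda_n\lambda_k^{-1}$ with the same signs for both augmentations, this yields the system
$$\nu_m\nu_n\nu_k^{-1} = d_f \qquad \text{for every incidence } (v,f),$$
where $e_m,e_n$ bound $f$ at $v$ and $e_k$ is the opposite edge. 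Multiplying the two relations at an edge $e$ shared by faces $f,f'$ at a common endpoint gives the squared relation $\nu_e^2 = d_f d_{f'}$.

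The heart of the argument is a combinatorial rigidity on the partial dual $G^*_T$. For $e \notin T$ we have $\nu_e = 1$, so $d_f d_{f'} = 1$ for the two faces meeting along such an edge; these are exactly the edges of $G^*_T$. By Lemma \ref{lem:slice}, $G^*_T$ is connected and has a unique cycle, which is a triangle. On the triangle the relations $d_ad_b = d_bd_c = d_cd_a = 1$ force $d_a = d_b = d_c = \epsilon$ with $\epsilon^2 = 1$, and propagating $d_{f'} = d_f^{-1}$ along the tree branches, where $\epsilon^{-1} = \epsilon$, shows all $d_f$ equal this common $\epsilon$. Feeding this back into $\nu_e^2 = d_fd_{f'} = \epsilon^2 = 1$ shows $\nu_e \in \{\pm 1\}$ for every edge, and the product of the three incidence relations at a vertex becomes $\nu_{e_1}\nu_{e_2}\nu_{e_3} = \epsilon$ at each vertex.

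Finally I would reduce modulo $2$. Setting $x_e \in \Z/2$ by $\nu_e = (-1)^{x_e}$ and $\eta \in \Z/2$ by $\epsilon = (-1)^\eta$, the data becomes $x_e = 0$ for $e \notin T$ together with $x_{e_1}+x_{e_2}+x_{e_3} = \eta$ at every vertex. Evaluating at the vertex $v_0$ not spanned by $T$, all three of whose edges lie in $E \sm T$, forces $\eta = 0$; thus the support $\{e : x_e = 1\}$ is an even subgraph contained in the forest $T$, hence empty. Therefore $\nu_e = 1$ for all $e$, i.e. $\vec\lambda = \vec\lambda'$, proving injectivity. The main obstacle is precisely the sign ambiguity: matching cross-ratios alone only recovers $\vec\lambda$ up to the $\pm 1$ factors $\nu_e$, and it is the odd cycle in $G^*_T$ together with the uncovered vertex $v_0$ and the acyclicity of $T$ that upgrade this to genuine injectivity.
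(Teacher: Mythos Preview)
Your approach differs from the paper's and is conceptually nice, but it has a gap. The paper argues constructively in two passes: first it shows every $H(\tau_f(v))$ is uniquely determined by the coloring, propagating outward from the distinguished vertex $v_0$---where all three edges are non-tree, so these values are $\pm 1$---to adjacent faces across non-tree edges (along such an edge with $\lambda_e=1$ the two $H$-values at the shared endpoint are reciprocals); then it recovers each $\lambda_i$ by peeling leaves off $T$. You instead set $\nu_e=\lambda_e/\lambda'_e$ and use the cycle structure of $G^*_T$ to force all $\nu_e=1$.

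The gap is that your incidence relations $\nu_m\nu_n\nu_k^{-1}=d_f$ exist only for \emph{interior} faces $f\in F$, while the later steps invoke them at every vertex. At a boundary vertex one of the three adjacent faces is the exterior $f_0\notin F$, so you have only two relations there; hence neither the product identity $\nu_{e_1}\nu_{e_2}\nu_{e_3}=\epsilon$ nor the even-subgraph conclusion is available at boundary vertices, and a nontrivial $\nu$ supported on a path in $T$ between two boundary vertices is not yet excluded. Similarly, edges of $G^*_T$ incident to $f_0$ yield no relation among your $d_f$'s, and the unique triangle of $G^*_T$ contains $f_0$ whenever $v_0$ lies on the outer boundary---which is unavoidable for many $G$, e.g.\ a prism, where every vertex is exterior. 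The repair is easy: define $d_{f_0}$ by the same recipe, since $H(\tau_{f_0}(v))=(-1)^{r_v}\lambda_m\lambda_n\lambda_k^{-1}$ makes perfect sense for the outer face and Equation~\ref{eqn:cross} applied to boundary edges shows these too are determined by the coloring up to a single scalar. With $f_0$ included, every vertex carries three relations, the full $G^*_T$ of Lemma~\ref{lem:slice} is in play, and your argument concludes as written.
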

\begin{proof}
We prove this in two steps. Suppose we have a coloring in the image of $\Phi$.
\begin{enumerate}
	\item We will first prove that the information of each $\pm \lambda_m\lambda_n\lambda_k^{-1}$ (each $H(\tau_f(v))$) is uniquely determined.
	\item Second, we use this data to uniquely determine all of the values of the $\lambda_i$.
\end{enumerate}

For the first step, note that if we know the value of $\lambda_m\lambda_n\lambda_k^{-1}$ for one face-vertex adjacency, then given the coloring, we know these values around the rest of that face by applying Equation \ref{eqn:cross} around the edges of the face. Also note that we know this value to be $\pm 1$ around the chosen vertex $v_0$ in the basis of the graph, and hence around the three faces around this vertex. We claim that we can always find the $\lambda_m\lambda_n\lambda_k^{-1}$ values inductively by choosing an adjacent face to the region where we have already found these values. Let $R$ be this region. Then note that since the tree $T$ contains no cycle, there is some edge $e$ of $\partial R$ which is not in the tree (so that $\lambda = 1$), and which is adjacent to the faces $f \in R$ and $g \notin R$. Pick one of the vertices $v$ bounding $e$, and $e_1$ and $e_2$ the other edges around $v$. Then $H(\tau_f(v)) = \pm \lambda_1/\lambda_2$ and $H(\tau_g(v))=\pm \lambda_2/\lambda_1$, with the same sign. These are therefore just reciprocals of each other. But we already know $H(\tau_f(v))$ since $f$ lies in $R$, so this allows us to extend this information through to the face $g$, which was not in $R$.

For the second step, note first that $\lambda_i = 1$ for non-tree edges. Meanwhile, if we know two values $\lambda_i$ and $\lambda_j$ around a vertex, then $\lambda_k$ is uniquely determined. So we can start at the roots of the tree $T$ and uniquely determine the value along the corresponding edge, and continue to deplete the tree in this way until all edges have been filled in uniquely. This proves the result.
\end{proof}

Finally, identifying $X_G(\F) \simeq X_T(\F)$ yields the desired morphisms $\Phi \colon X_G(\F) \rightarrow \chi_{G^*}(\F\PP^1)$. However, it is not obvious that this map is canonical, since a priori, it could depend on the choice of tree $T$. In order to prove it is canonical, we need to understand how these morphisms depend on $T$. In the introduction to this appendix, we constructed the projection morphism $\pi \colon \wt{X_G(\F)} \rightarrow X_T(\F)$. Our key lemma is the following.

\begin{lemma} \label{lem:nat mor} The morphism $\Phi \colon X_T(\F) \rightarrow \chi_{G^*}(\F\PP^1)$ satisfies $\Phi \circ \pi = \wt{\Phi}$.
\end{lemma}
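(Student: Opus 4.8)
The plan is to reduce the statement to a torus-invariance property of the propagation procedure defining $\wt\Phi$. Recall from the construction preceding this lemma that $\Phi$ is by definition the restriction of $\wt\Phi$ along the inclusion $X_T(\F)\hookrightarrow\wt{X_G(\F)}$, so that $\Phi(\vec\mu)=\wt\Phi(\vec\mu)$ for every $\vec\mu\in X_T(\F)$. Thus for a given $\vec\lambda\in\wt{X_G(\F)}$ it suffices to prove $\wt\Phi(\pi(\vec\lambda))=\wt\Phi(\vec\lambda)$, and the heart of the matter is to realize $\pi(\vec\lambda)$ as a translate of $\vec\lambda$ under the symmetry group of Proposition \ref{prop:action}.

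First I would reinterpret the $\Z^{g+3}$-action at the level of augmentations. Specializing the formal variable $t$ to scalars, the generators $\p_0,\p_{f_1},\ldots,\p_{f_{g+2}}$ define an action of the torus $(\F^*)^{g+3}$ on $(\F^*)^{3g+3}\supseteq\wt{X_G(\F)}$, which preserves $\wt{X_G(\F)}$ since Proposition \ref{prop:action} shows the action is by dg-isomorphisms: here $\p_0$ rescales every edge coordinate by a common $s_0\in\F^*$, while $\p_f$ rescales exactly the coordinates of edges contained in $f$. By Lemma \ref{lem:slice} the induced action on the non-tree edge coordinates is governed by the unimodular matrix $A$, hence is a torus isomorphism; consequently for each $\vec\lambda$ there is a unique group element $g_{\vec\lambda}$ with $g_{\vec\lambda}\cdot\vec\lambda$ having all non-tree coordinates equal to $1$. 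The defining formula of $\pi$ from the proof of Theorem \ref{thm:basis inv} together with this uniqueness identifies $\pi(\vec\lambda)=g_{\vec\lambda}\cdot\vec\lambda$, so $\pi$ is exactly the operation of sliding $\vec\lambda$ into the slice $X_T(\F)$ by the torus.

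The key step is then to prove $\wt\Phi$ is invariant under this torus action, i.e.\ $\wt\Phi(g\cdot\vec\lambda)=\wt\Phi(\vec\lambda)$ for all $g$. I would track the effect of the action on the face--vertex quantities $H(\tau_f(v))=(-1)^{r_v}\lambda_m\lambda_n\lambda_k^{-1}$. Repeating the local bookkeeping of Proposition \ref{prop:action} (which is Equation \ref{eqn:action}) shows $\p_0$ multiplies every $H(\tau_f(v))$ by $s_0$, while $\p_f$ multiplies $H(\tau_f(v))$ by $s_f^2$ for $v\in f$ and fixes $H(\tau_{f'}(v))$ for $f'\neq f$; the point is that the net scalar $c_f\in\F^*$ depends only on the face $f$ and not on the vertex $v$. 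Now observe that, after identifying the edges in the local model of Figure \ref{fig:Edge_Setup}, the left-hand side of Equation \ref{eqn:cross} is the ratio $H(\tau_A(v))/H(\tau_A(w))$ of the contributions at the two endpoints of the edge for the thread into one common adjacent face $A$. The face-wise factor $c_A$ therefore cancels in this ratio, so the cross-ratio data attached to every edge is unchanged by $g$.

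Finally, since the coloring produced by the propagation of Lemma \ref{lem:aug_to_color} is determined entirely by the fixed normalization $(0,1,\infty)$ at the base vertex together with these per-edge cross-ratios, the output in $\chi_{G^*}(\F\PP^1)=\wt{\chi_{G^*}(\F\PP^1)}/\op{PSL}_2(\F)$ is unchanged; hence $\wt\Phi(g\cdot\vec\lambda)=\wt\Phi(\vec\lambda)$. Combining the three steps yields $\Phi(\pi(\vec\lambda))=\wt\Phi(g_{\vec\lambda}\cdot\vec\lambda)=\wt\Phi(\vec\lambda)$, which is the claim. I expect the main obstacle to be the third step's bookkeeping, namely verifying cleanly that the two contributions appearing in Equation \ref{eqn:cross} feed into the \emph{same} face, so that the common factor $c_A$ cancels; once this and the slice description $\pi(\vec\lambda)=g_{\vec\lambda}\cdot\vec\lambda$ are in place, the rest is formal.
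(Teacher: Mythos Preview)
Your argument is correct, and it takes a genuinely different route from the paper's own proof. The paper works directly with the explicit formula $\lambda'_k=\lambda_k\prod_j\lambda_j^{b_{kj}}$ for $\pi$ and reduces the claim to the linear-algebra identity $b_{qj}+b_{sj}-b_{rj}-b_{tj}=0$ for every $j$, which it verifies by observing that the row vector with $+1$ in positions $q,s$ and $-1$ in positions $r,t$ annihilates the face--edge adjacency matrix (each face $A,B,C,D$ in Figure~\ref{fig:Edge_Setup} touches exactly one $+1$ edge and one $-1$ edge). You instead recast $\pi$ as the unique torus translate into the slice $X_T$ (via Lemma~\ref{lem:slice} and the unimodularity of $A$) and then show $\wt\Phi$ is torus-invariant by tracking how each $H(\tau_f(v))$ scales: by a factor $c_f$ depending only on the face $f$, so the ratio $H(\tau_A(v))/H(\tau_A(w))$ appearing on the left of Equation~\ref{eqn:cross} is fixed. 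Both proofs ultimately check invariance of the same cross-ratio, but yours replaces a matrix computation with the bookkeeping of Proposition~\ref{prop:action}; this buys a cleaner conceptual picture (the map $\wt\Phi$ is constant on torus orbits, and $\pi$ is the torus-projection to a slice) at the cost of importing the case analysis from the proof of Proposition~\ref{prop:action}. Your identification of the left side of Equation~\ref{eqn:cross} with $H(\tau_A(v))/H(\tau_A(w))$ is exactly right---in the notation of Figure~\ref{fig:Edge_Setup}, $p,q\subset A$ at $v$ and $p,t\subset A$ at $w$---so the cancellation of $c_A$ goes through as you say.
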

\begin{proof}
Recall that $\pi(\vec{\lambda}) = \vec{\lambda}'$ with $\lambda'_k = \lambda_k \prod_{j=1}^{g+3}\lambda_j^{b_{kj}}$. We need to check that applying $\Phi$ to this vector yields the same value as applying $\wt{\Phi}$ to the original vector. But the construction of $\wt{\Phi}$ only depended on the values appearing on the left hand side of Equation \ref{eqn:cross}. Hence, it suffices to prove that around every edge $p$, using the same notation as the set-up from before (see Figure \ref{fig:Local_Picture}), we have
$$\frac{\lambda'_q\lambda'_s}{\lambda'_r\lambda'_t} = \frac{\lambda_q\lambda_s}{\lambda_r\lambda_t}.$$
Writing this out in terms of our expression for $\lambda'$, we find that this is equivalent to checking that
$$\prod_{j=1}^{g+3}\lambda_j^{b_{qj}+b_{sj}-b_{rj}-b_{tj}} = 1,$$
or in other words, that for each $j = 1,\ldots,g+3$, the matrix $b_{ij}$ satisfies $b_{qj}+b_{sj}-b_{rj}-b_{tj} = 0$ when $p,q,r,s$ are in the position corresponding to our set-up.

Recall that
$$
\left(\rule{0cm}{0.5cm}\quad b_{ij} \quad \right) = - \left(\begin{array}{cc}
A\\
\,
A_T
\end{array}\right) \left(\rule{0cm}{0.5cm} \quad A^{-1} \quad \right).
$$
But the matrix $\left(\begin{array}{cc}A\\ \, A_T\end{array}\right)$ is just the face-edge adjacency matrix beginning with a column of all $1$'s. Hence, if we multiply on the left with the row with entries $+1$ corresponding to edges $q$ and $s$ and $-1$ corresponding to edges $r$ and $p$, we see that the result is the $0$ row vector, since each of the regions $A,B,C,D$ in Figure \ref{fig:Local_Picture} borders one edge contributing $+1$ and one contributing $-1$. Therefore, this row times the matrix $b_{ij}$ yields $0$, but the entries of this row are precisely the entries of the form $b_{qj}+b_{sj}-b_{rj}-b_{tj}$.
\end{proof}

\begin{cor} The morphism $\Phi \colon X_G(\F) \rightarrow \chi_{G^*}(\F\PP^1)$ does not depend on the choice of tree $T$.
\end{cor}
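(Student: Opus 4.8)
The plan is to deduce the corollary essentially formally from Lemma \ref{lem:nat mor}, exploiting that $\wt\Phi \colon \wt{X_G(\F)}\to\chi_{G^*}(\F\PP^1)$ is built by the propagation procedure directly on $\wt{X_G(\F)}$ and therefore carries no dependence on $T$. Write $\iota_T \colon X_G(\F)\xrightarrow{\sim}X_T(\F)$ for the isomorphism of Theorem \ref{thm:basis inv} and $\rho \colon \wt{X_G(\F)}\to X_G(\F)$ for the restriction morphism induced by $(\A_G,\dd)\hookrightarrow(\wt\A_G,\wt\dd_G)$, so that the projection $\pi$ of the appendix's introduction factors as $\pi = \iota_T\circ\rho$. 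The map whose $T$-independence is at stake is the transported morphism $\Phi_G^T := \Phi\circ\iota_T$, where $\Phi \colon X_T(\F)\to\chi_{G^*}(\F\PP^1)$ is the propagation morphism on the slice. Substituting $\pi = \iota_T\circ\rho$ into Lemma \ref{lem:nat mor} then gives
$$\Phi_G^T\circ\rho \;=\; \Phi\circ\iota_T\circ\rho \;=\; \Phi\circ\pi \;=\; \wt\Phi,$$
so for \emph{every} tree $T$ the composite $\Phi_G^T\circ\rho$ equals the single, $T$-independent morphism $\wt\Phi$.

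It therefore remains to cancel $\rho$ on the right, i.e. to show that a morphism out of $X_G(\F)$ is determined by its precomposition with $\rho$. I would do this by producing a section of $\rho$. The slice inclusion $j_T \colon X_T(\F)\hookrightarrow\wt{X_G(\F)}$, coming from the description $X_T(\F)=\{\vec\lambda : \lambda_i=1 \text{ when } e_i\notin T\}$, satisfies $\pi\circ j_T = \mathrm{id}_{X_T(\F)}$: for a point of $X_T(\F)$ all non-tree coordinates equal $1$, and the explicit formula $\lambda'_k = \lambda_k\prod_{j=1}^{g+3}\lambda_j^{b_{kj}}$ from the proof of Theorem \ref{thm:basis inv} then collapses to $\lambda'_k=\lambda_k$. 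Consequently $s := j_T\circ\iota_T \colon X_G(\F)\to\wt{X_G(\F)}$ is a morphism of varieties with $\rho\circ s = \iota_T^{-1}\circ\pi\circ j_T\circ\iota_T = \mathrm{id}_{X_G(\F)}$, so $\rho$ is a split epimorphism. I expect this to be the only genuinely substantive point; everything surrounding it is diagram-chasing. One should note that $s$ is a morphism of varieties (as $j_T$ is a closed immersion and $\iota_T$ an isomorphism), so the cancellation is valid scheme-theoretically and not merely on points.

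Finally I would conclude: given two trees $T,T'$, both $\Phi_G^T\circ\rho = \wt\Phi$ and $\Phi_G^{T'}\circ\rho = \wt\Phi$, hence $\Phi_G^T\circ\rho = \Phi_G^{T'}\circ\rho$; precomposing with any section $s$ of $\rho$ yields
$$\Phi_G^T \;=\; \Phi_G^T\circ\rho\circ s \;=\; \Phi_G^{T'}\circ\rho\circ s \;=\; \Phi_G^{T'},$$
which is exactly the asserted independence of $\Phi$ from the choice of $T$. The hard part, such as it is, is recognizing that Lemma \ref{lem:nat mor} already does all the real work and that the required right-cancellation reduces to the elementary observation that the slice $X_T(\F)$ splits the projection $\pi$.
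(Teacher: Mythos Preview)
Your argument is correct and is essentially the paper's own proof, repackaged: both use Lemma \ref{lem:nat mor} to identify $\Phi\circ\pi$ (resp.\ $\Phi'\circ\pi'$) with the tree-independent $\wt\Phi$, and both invoke the section property $\pi\circ j_T=\mathrm{id}$ of the slice inclusion to cancel the projection. The only cosmetic difference is that you organize the diagram chase around the invariant object $X_G(\F)$ and the split epimorphism $\rho$, whereas the paper stays on the slices and computes $\Phi=\wt\Phi\circ i=\Phi'\circ(\pi'\circ i)$, then identifies $\pi'\circ i$ with the canonical isomorphism $X_T\xrightarrow{\sim}X_G\xrightarrow{\sim}X_{T'}$; these are the same argument up to where one puts the hub of the diagram.
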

\begin{proof}
For any two choices of tree $T$ and $T'$, we have the following commutative diagram:
$$\begin{tikzcd}
X_T \arrow[equal]{rr} \arrow[hookrightarrow]{rd}{i} & & X_T \ar{rd}{\Phi} & \\
& \wt{X_G} \ar{rr}{\wt{\phi}} \ar{ru}{\pi} \ar{rd}{\pi} &  & \chi_{G^*} \\
X_{T'} \arrow[equal]{rr} \arrow[hookrightarrow]{ur}{i} & & X_{T'} \ar{ru}{\Phi'}] &
\end{tikzcd}$$
But then, $\Phi = \Phi \circ \pi \circ i = \wt{\Phi} \circ i = \Phi' \circ \pi' \circ i$, and finally we obtain the independence since $\pi' \circ i \colon X_T \rightarrow X_{T'}$ is the morphism $X_T \xrightarrow{\sim} X_G \xrightarrow{\sim} X_{T'}$ by the following commutative diagram:
$$\begin{tikzcd}
X_T \ar[hookrightarrow]{d}{i} \ar[equal]{r} & X_T \\
\wt{X_G} \ar{ur}{\pi} \ar{r} \ar[swap]{rd}{\pi'}] & X_G \ar[swap]{u}{\sim}] \ar{d}{\sim} \\
 & X_{T'}
\end{tikzcd}$$
\end{proof}

\subsection{Construction of an inverse morphism}

\begin{lemma} For $\F = \overline{\F}$ algebraically closed, then $\Phi$ is an isomorphism.
\end{lemma}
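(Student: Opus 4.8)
The plan is to construct an explicit inverse morphism $\Psi \colon \chi_{G^*}(\F\PP^1) \to X_T(\F) \cong X_G(\F)$, reversing the two-step recovery used in the injectivity lemma above, and then to verify that $\Phi \circ \Psi$ and $\Psi \circ \Phi$ are the identity. The hypothesis $\F = \overline\F$ enters only at the end: it lets me conclude that a morphism between reduced varieties which agrees with the identity on all $\overline\F$-points is the identity as a morphism.

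First I would fix a basis $(v_0,T)$ and, using the slice description of $\chi_{G^*}(\F\PP^1)$, represent a coloring by its unique $\op{PSL}_2(\F)$-representative with the three faces around $v_0$ colored $0,1,\infty$. For each face--vertex incidence $(f,v)$ I reconstruct the scalar $H(\tau_f(v)) \in \F^*$ exactly as in the injectivity argument: it is normalized to $\pm 1$ around $v_0$, and Equation \ref{eqn:cross} pins down its ratio to the neighbouring value across any shared edge, so propagation through $G^*$ assigns a value to every incidence. Path-independence of this propagation is Lemma \ref{lem:aug_to_color} read backwards, using that the cross ratio determines the fourth color. Each $H(\tau_f(v))$ is then a rational function of the colors whose denominator is a product of differences $\kappa_i - \kappa_j$ over adjacent faces; these are units on $\chi_{G^*}(\F\PP^1)$, so the assignment is regular. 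From the $H$-values I recover the edge labels by the second step of the injectivity proof --- set $\lambda_i = 1$ for $e_i \notin T$ and solve around the vertices of $T$ --- again a composition of rational operations with nonvanishing denominators.

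Next I would check that the output lands in $X_T(\F)$, i.e. that the recovered labels satisfy the augmentation equations $\sum_{v \in f} H(\tau_f(v)) = 0$. This is the telescoping observation preceding Lemma \ref{lem:aug_to_color}: after moving $\kappa_f$ to $\infty$, each $H(\tau_f(v))$ is, with a common proportionality constant, a difference of the colors of two consecutive neighbours of $f$, so the sum around $f$ collapses to $0$; the equation at the face at infinity holds automatically by the Remark following Lemma \ref{lem:aug_to_color}. This establishes $\Psi$ as a morphism $\chi_{G^*}(\F\PP^1) \to X_T(\F)$.

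Finally, I would show $\Phi \circ \Psi$ and $\Psi \circ \Phi$ are the identity. For $\Phi \circ \Psi$: both it and $\mathrm{id}$ satisfy Equation \ref{eqn:cross} around every edge with the same normalization at $v_0$, so they agree on all $\overline\F$-points, and since $\chi_{G^*}(\F\PP^1)$ is smooth (hence reduced) and $\F = \overline\F$, they coincide as morphisms; $\Psi \circ \Phi = \mathrm{id}$ follows because the recovery is, by construction, a literal inverse of the rational formulas defining $\Phi$, so it may be checked directly on coordinate functions. Hence $\Phi$ is an isomorphism. I expect the main obstacle to be bookkeeping rather than conceptual: carrying the sign factors $(-1)^{r_v}$ and the degenerate $\infty$-cases of the cross ratio through the construction so that $\Psi$ is visibly regular everywhere on $\chi_{G^*}(\F\PP^1)$, and confirming path-independence of the $H$-value propagation, which is precisely where the closing-up (telescoping) structure of a genuine coloring is needed.
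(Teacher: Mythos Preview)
Your approach is correct in outline but takes a different, more indirect route than the paper. The paper does not reconstruct the $H$-values by propagation at all: instead it covers the pre-chromatic variety $\wt{\chi_{G^*}(\F\PP^1)}$ by affine charts $U_x$ (colorings avoiding the value $x$), moves $x$ to $\infty$ by some $\xi_x \in \op{PSL}_2(\F)$, and on each chart simply \emph{defines} $\lambda_e := \kappa_A - \kappa_B$ where $A,B$ are the two faces along $e$. This single formula lands directly in $\wt{X_G(\F)}$---the augmentation equation at every face is then an immediate telescoping sum---and one composes with the projection $\pi\colon \wt{X_G(\F)}\to X_T(\F)$. The gluing of these chart-wise maps into a global $\Psi$ is handled not by any regularity check but by the already-proved injectivity of $\Phi$ together with Lemma~\ref{lem:nat mor}, which says $\Phi\circ\pi = \wt\Phi$; since $\Phi\circ(\pi\circ\wt\Psi_x\circ\xi_x)(\kappa) = [\kappa]$ on every chart, the maps must agree on overlaps and descend to the quotient.

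By contrast, your plan works on a single slice (the face at $v_0$ labelled $\infty$) and reverse-engineers first the $H$-values, then the $\lambda$-values, following the injectivity proof backwards. This can be made to work, but the step you flag as ``Lemma~\ref{lem:aug_to_color} read backwards'' is not literally that lemma: Lemma~\ref{lem:aug_to_color} shows path-independence of the coloring given an augmentation, whereas you need path-independence of the $H$-values given a coloring, which is a separate (if closely related) statement. The cleanest way to establish it is exactly the paper's shortcut---observe that $H(\tau_f(v))$ must be proportional to $(\kappa_A-\kappa_B)(\kappa_A-\kappa_C)(\kappa_B-\kappa_C)^{-1}$---at which point you have essentially rediscovered the direct $\lambda_e = \kappa_A - \kappa_B$ construction anyway. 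The paper's route therefore buys you a cleaner verification that the image lies in $\wt{X_G}$ and sidesteps the propagation bookkeeping entirely, at the cost of having to glue affine charts; your route avoids the gluing but front-loads the consistency checks.
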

\begin{proof}
We shall again take the viewpoint that $X_G(\F) \simeq X_T(\F)$, and construct an inverse morphism $\Psi \colon \chi_{G^*}(\F\PP^1) \rightarrow X_T(\F)$. In order to do this, we first define $\Psi$ on affine pieces of $\widetilde{\chi_{G^*}(\F\PP^1)}$. The image of each of these pieces will lie in $\widetilde{X_G(\F)}$, but will not patch together properly. However, the construction is done equivarently so that they do indeed patch together to descend to a map $\chi_{G^*}(\F\PP^1) \rightarrow X_T(\F)$. In other words, the images do patch in $X_T(\F)$, and the image only depends on the coloring up to $\op{PSL}_2(\F)$ reparametrization.

For any element $x \in \F\PP^1$, pick an element $\xi_x \in \op{PSL}_2(\F)$ sending $x$ to $\infty$. We define the open affine piece $U_x$ of the pre-chomatic variety as
$$U_x = \widetilde{\chi_{G^*}(\F\PP^1)} \cap (\F\PP^1 \setminus \{x\})^{g+3}.$$
The element $\xi_x$ then naturally identifies $U_x$ as an affine variety $V_x \subseteq \F^{3g+3}$, where the correspondence $\xi_x$ preserves the image in $\chi_{G^*}(\F\PP^1)$. We write
$$\xi_x \colon U_x \xrightarrow{\sim} V_x.$$
So long as $|\F| \geq g+3$, the open sets $U_x$ cover $\wt{\chi_{G^*}(\F\PP^1)}$ (since every coloring uses at most $g+3$ elements in $\F\PP^1$). In our case, $\F$ is algebraically closed, hence infinite.

For each $x$, we form the morphism $\wt{\Psi}_x \colon V_x \rightarrow (\F^*)^{3g+3}$ as follows. Consider some coloring $\kappa \in V_x$. Then take $\lambda_k := \kappa_A - \kappa_B$ corresponding to the orientation of the edge $k$, where $A$ and $B$ are the regions above and below $k$, respectively, when $k$ is oriented to the right. We see then immediately that with these choices of $\lambda_k$, Equation \ref{eqn:cross} is satisfied at every edge. In particular, the image of $\wt{\Psi}_x$ lies in $\wt{X_G(\F)}$ and satisfies
$$\wt{\Phi}\wt{\Psi}_x(\kappa) = [\kappa] \in \chi_{G^*}(\F\PP^1)$$
by construction. Now set $\pi \colon \wt{X_G(\F)} \rightarrow X_T(\F)$ the natural projection. Then by Lemma \ref{lem:nat mor}, the previous equation implies
$$\Phi \circ \pi \circ \wt{\Psi}_x(\kappa) = [\kappa].$$
In particular, if $\kappa \in U_x$, then $[\xi_x(\kappa)] = [\kappa]$, and so
$$\Phi \circ \pi \circ \wt{\Psi}_x \circ \xi_x(\kappa) = [\kappa].$$
But $\Phi$ is injective, so the maps $\pi \circ \wt{\Psi}_x \circ \xi_x$ defined one each $U_x$ glue together to give some morphism $\wt{\Psi} \colon \wt{\chi_{G^*}(\F\PP^1)} \rightarrow X_T(\F)$ satisfying
$$\Phi \wt{\Psi}(\kappa) = [\kappa].$$
In particular, again since $\Phi$ is injective, $\wt{\Psi}$ only depends on the class of $\kappa$, and so descends to $\Psi \colon \chi_{G^*}(\F) \rightarrow X_T(\F)$ such that $\Phi \Psi$ is equal to the identity on $\chi_{G^*}(\F)$. Therefore, $\Phi$ must also be surjective on rational points, and so it is a bijection, and so $\Psi$ is in fact its inverse map as a map of sets. But both of these are rational maps, and since we are working over an algebraically closed field, the variety is completely described by its set of points, and so this is an isomorphism of varieties.
\end{proof}

The above proof goes through to show that $\Phi$ induces a bijection of rational points so long as the open sets $U_x$ cover $\wt{X_G(\F)}$. So for $|\F| \geq g + 3$, if we were only interested in establishing this bijection, we would be finished. Establishing an isomorphism of varieties in all cases, including when $|\F| < g+3$, is the following result.

\begin{cor} \label{cor:conclusion} For any $\F$, $\Phi$ is an isomorphism of $\F$-varieties.
\end{cor}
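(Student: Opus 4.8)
The plan is to deduce the general case from the algebraically closed case of the previous lemma by faithfully flat descent of the property of being an isomorphism; the intervention of EGA is precisely what circumvents the failure of the affine cover $\{U_x\}$ when $|\F| < g+3$. The conceptual point is that $X_G$, $\chi_{G^*}(\F\PP^1)$ and $\Phi$ are not genuinely tied to the field $\F$: their defining data is purely combinatorial and carries integer ($\pm 1$) coefficients.

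First I would record that $X_G$, $\chi_{G^*}(\F\PP^1)$, and $\Phi$ are all defined over the prime field $\F_0 \sse \F$, equivalently over $\Z$. Indeed, via the $T$-versal model of Theorem \ref{thm:basis inv} the variety $X_G(\F) \simeq X_T(\F)$ is cut out inside the torus $(\F^*)^{2g}$ by the equations $\dd_T f_j = 0$, whose coefficients are edge-monomials with coefficients $\pm 1$; and via the slice model fixing $(\kappa_i,\kappa_j,\kappa_k) = (0,1,\infty)$ around a vertex, $\chi_{G^*}(\F\PP^1)$ is the locally closed subvariety of $(\F\PP^1)^{g}$ carved out by the non-adjacency conditions $\kappa_a \neq \kappa_b$. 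Finally, each propagation step defining $\Phi$ is given by the cross-ratio rational function of Equation \ref{eqn:cross}, again with coefficients $\pm 1$. Hence there are $\F_0$-schemes $\mathcal{X}$, $\mathcal{Y}$ and an $\F_0$-morphism $\Phi_0 \colon \mathcal{X} \lr \mathcal{Y}$ whose base changes to $\F$ recover $X_G$, $\chi_{G^*}(\F\PP^1)$ and $\Phi$; in particular the formation of all three commutes with the field extension $\F \sse \overline{\F}$.

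Second I would invoke the algebraically closed case. The preceding lemma shows that $\Phi_{\overline{\F}} \colon X_G(\overline{\F}) \lr \chi_{G^*}(\overline{\F}\PP^1)$ is an isomorphism of $\overline{\F}$-varieties, an honest inverse morphism $\Psi$ being produced and injectivity upgrading $\Phi\Psi = \mathrm{id}$ to a two-sided inverse. Now $\spec \overline{\F} \lr \spec \F$ is faithfully flat and quasi-compact, and $\Phi$ is a morphism of finite-type $\F$-schemes whose pullback along this cover is an isomorphism; by faithfully flat descent of the isomorphism property (EGA IV, 2.7.1) the morphism $\Phi$ is itself an isomorphism of $\F$-varieties, for every $\F$.

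The main obstacle is not the descent step, which is formal, but the base-change compatibility that feeds into it: one must verify scheme-theoretically — not merely on points — that the base changes to $\overline{\F}$ of $X_G$ and $\chi_{G^*}(\F\PP^1)$ are $X_G(\overline{\F})$ and $\chi_{G^*}(\overline{\F}\PP^1)$ respectively, and that $\Phi$ pulls back to exactly the morphism analyzed in the previous lemma. This is where the explicit integer-coefficient descriptions above do the real work: they certify that no field-dependent coefficients are hidden in the defining ideals or in the formulas for $\Phi$, so that $X_G$, $\chi_{G^*}(\F\PP^1)$ and $\Phi$ are genuine base changes of the fixed $\F_0$-data $(\mathcal{X}, \mathcal{Y}, \Phi_0)$ and descent applies verbatim.
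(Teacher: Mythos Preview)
Your proposal is correct and follows essentially the same approach as the paper: both invoke faithfully flat descent via EGA IV, Proposition 2.7.1, using that the base change of $\Phi$ to $\overline{\F}$ coincides with the morphism already shown to be an isomorphism in the previous lemma. You spell out the base-change compatibility (that the defining data has integer coefficients) more carefully than the paper, which simply asserts that ``the natural extension of $\Phi_{\F}$ to $\overline{\F}$ varieties \ldots\ is just the same $\Phi_{\overline{\F}}$ we defined.''
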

\begin{proof}
This is a consequence of \cite[Proposition 2.7.1(viii)]{GD}, noting that the the natural extension of $\Phi_{\F}$ to $\overline{\F}$ varieties (which is just the same $\Phi_{\overline{\F}}$ we defined) is an isomorphism of $\overline{\F}$-varieties.
\end{proof}

This completes the proof of Theorem \ref{thm:augs_colors}.

\subsection{Examples} \label{subsect:examples}
We present two examples to illustrate the correspondence between augmentations and colorings.

In the first example, depicted in Figure \ref{fig:Color_example_mod_3}, we work over the field $\F = \F_3$. Hence, $\F^*$ consists of two elements, $\pm 1$, and $\F\PP^1$ consists of the elements $\{0,1,2,\infty\}$. Note that the information of $H(\tau_f(v)) = \pm \lambda_m \lambda_n \lambda_k^{-1}$ actually only depends on $v$, since $\F^*$ is invariant under inversion, so $\lambda_k^{-1} = \lambda_k$. Hence, it suffices to only label $H(\tau_f(v))$ once around each vertex $v$ as opposed to three times. A sample augmentation has values in green, with the information of $H(\tau_f(v))$ in purple corresponding to the orientation in light blue. A corresponding (pre-)coloring is written in red. Note that there is a vertex around which the faces are $0,1,\infty$, and the reader can find an edge $T$ such that this example corresponds to a $T$-versal augmentation.

\begin{figure}[h]
\includegraphics[scale=0.7]{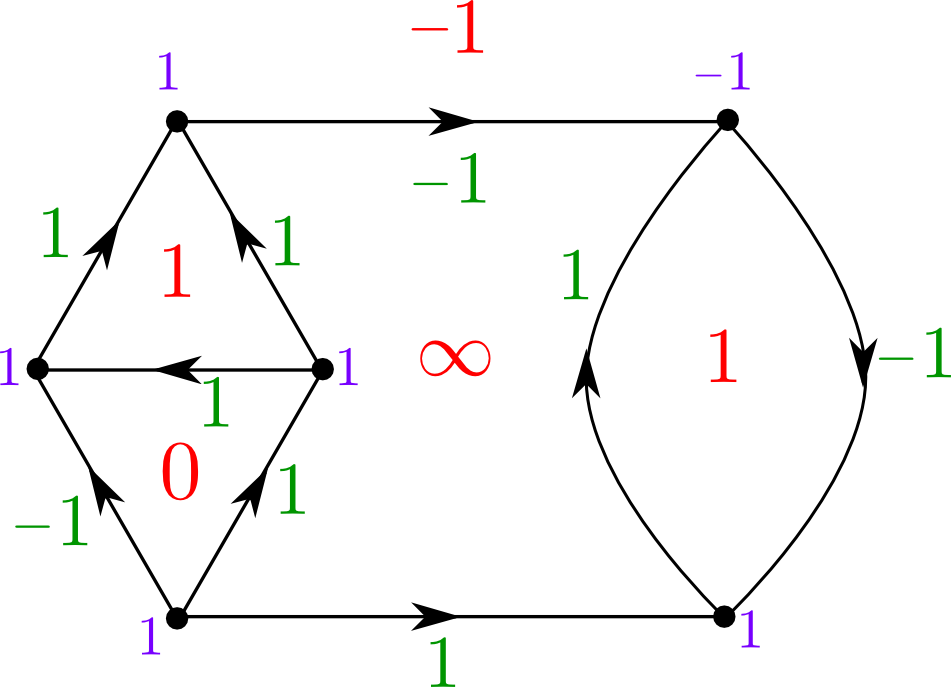}
\centering
\caption{In this example, over $\F_3$, the arrows represent the orientation, the green labels at the edges represent some solution to the augmentation equations, the purple labels at the vertices are the values of $H(\tau_f(v)) = \pm \lambda_m\lambda_n\lambda_k^{-1}$, and the red labels are a corresponding coloring (determined up to $\op{PSL}_2(\F_3)$)}
\label{fig:Color_example_mod_3}
\end{figure}

In the second example, depicted in Figure \ref{fig:Color_example_mod_2}, we work over the field $\F = \F_4$, with elements $\{0,1,\alpha,\alpha+1\}$, satisfying $\alpha^2 + \alpha + 1 = 0$. Since this field has characteristic $2$, signs become unimportant, so we do not need to include a choice of orientation. However, the labels of $H(\tau_f(v))$ do depend on the choice of face around the vertex $v$, so we need to label all three corners around a vertex. We use the same colors as in the previous example.

\begin{figure}[h]
\includegraphics[width=8cm]{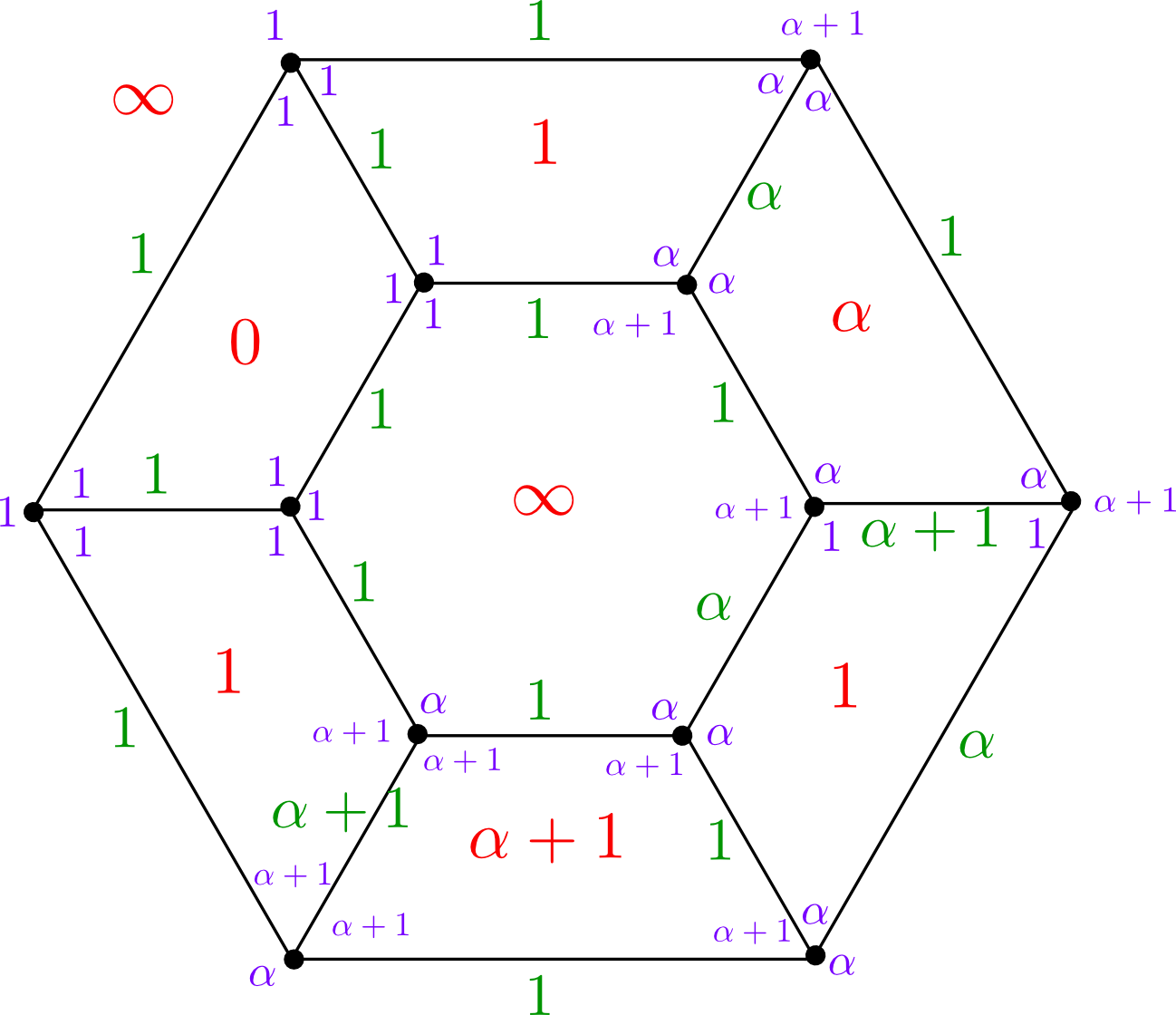}
\centering
\caption{Another example, this one over $\F_4 = \F_2[\alpha]/(\alpha^2 + \alpha + 1)$, where no orientation is needed, but the vertices are labelled thrice corresponding to the different values of $H(\tau_f(v))$ around a vertex $v$.}
\label{fig:Color_example_mod_2}
\end{figure}

\subsection{A related combinatorial identity} \label{subsect:numtheory}
A key idea in the proof of the theorem was that the precise values of the $\pm \lambda_m\lambda_n\lambda_k^{-1}$ were not relevant, but rather the ratios between them around a given face. Consider to each face-vertex adjacency $(f,v)$, we assign some value $\mu_{f,v} \in \F^*$, of which there are $3|V| = 6g+6$ values to assign. One thing we could to is try to impose conditions on the $\mu_{f,v}$ so that we precisely recreate what the coloring conditions coming from the cross ratio being the ratio of consecutive $\mu$ values. So suppose we have an edge $e$ between vertices $v$ and $w$ separating faces $f$ and $g$. Then recall that the cross ratio should be encoded by both $\mu_{f,v}/\mu_{f,w}$ and $\mu_{g,w}/\mu_{g,v}$. So the first condition is that for each edge $e$ as above,
\begin{equation} \label{eqn:mu_prod}
\mu_{f,v}\mu_{g,v} = \mu_{f,w}\mu_{g,w}
\end{equation}

Additionally, one should have (as from the augmentation equations) that
\begin{equation} \label{eqn:mu_sum}
\sum_{v \in f} \mu_{f,v} = 0
\end{equation}
for each face $f$.

\begin{definition}
We define the variety $\widetilde{M_G(\F)} \subset (\F^*)^{6g+6}$ by the variety defined by Equations \ref{eqn:mu_prod} and \ref{eqn:mu_sum}. These equations are invariant by multiplying all $\mu$ around a given face by an element of $\F^*$, or by multiplying all of them by an element of $\F^*$, and the quotient $M_G(\F) = \widetilde{M_G(\F)}/(\F^*)^{g+3}$ is called the \textbf{$\mu$-color variety}.
\end{definition}

In essence, Lemma \ref{lem:aug_to_color} can be thought of as producing a morphism $M_G(\F) \rightarrow \chi_{G^*}(\F\PP^1)$, through which our morphism $X_G(\F) \rightarrow \chi_{G^*}(\F\PP^1)$ naturally factors. Implicit in this proof is the following proposition.

\begin{prop}
As varieties, the natural morphism $M_G(\F) \rightarrow \chi_{G^*}(\F\PP^1)$ is an isomorphism.
\end{prop}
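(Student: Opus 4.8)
The plan is to exhibit the natural morphism $\rho\colon M_G(\F)\to\chi_{G^*}(\F\PP^1)$ together with an explicit inverse $\Theta$, both defined over $\F$. The morphism $\rho$ is nothing but the propagation procedure of Lemma \ref{lem:aug_to_color}: a point of $\widetilde{M_G(\F)}$ assigns to each face--vertex corner a value $\mu_{f,v}\in\F^*$, and Equation \ref{eqn:cross} reads entirely in terms of the ratios $\mu_{f,v}/\mu_{f,w}$ of these values. Thus the path/isotopy argument of Lemma \ref{lem:aug_to_color} applies verbatim, using only the defining relations of $M_G(\F)$: Equation \ref{eqn:mu_sum} plays the role of the augmentation equation $\sum_{v\in f}H(\tau_f(v))=0$, so that colors close up around each face, while Equation \ref{eqn:mu_prod} is exactly the statement that the cross ratio in Equation \ref{eqn:cross} is computed consistently from either of the two faces adjacent to an edge, i.e.\ $\mu_{f,v}/\mu_{f,w}=\mu_{g,w}/\mu_{g,v}$. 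Moreover $\rho$ sits in the factorization $X_G(\F)\xrightarrow{\alpha}M_G(\F)\xrightarrow{\rho}\chi_{G^*}(\F\PP^1)$ with $\rho\circ\alpha=\Phi$, where $\alpha$ records the $H$--values of an augmentation; since $\Phi$ is an isomorphism by Theorem \ref{thm:augs_colors}, $\rho$ is already a split epimorphism and it remains to produce a two--sided inverse.

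For the inverse I would reuse the construction $\wt\Psi_x$ of the previous subsection on the affine charts $U_x\subset\widetilde{\chi_{G^*}(\F\PP^1)}$. On $U_x$ set $\lambda_e=\kappa_A-\kappa_B$ across each edge $e$ and then $\mu_{f,v}:=H(\tau_f(v))=(-1)^{r_v}\lambda_m\lambda_n\lambda_k^{-1}$; concretely $\Theta|_{U_x}=\alpha\circ\wt\Psi_x$, so it is automatically a morphism whose image lies in $M_G(\F)$. One then checks the two defining relations: Equation \ref{eqn:mu_prod} is the direct identity $\mu_{f,v}\mu_{g,v}=\pm\lambda_e^2=\mu_{f,w}\mu_{g,w}$ for the shared edge $e$, and Equation \ref{eqn:mu_sum} holds because these $\mu$ satisfy Equation \ref{eqn:cross} around every edge and therefore telescope to $0$ around each face, as in the remark following Lemma \ref{lem:aug_to_color}. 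Finally, a Möbius transformation $\kappa\mapsto(a\kappa+b)/(c\kappa+d)$ multiplies every $\mu_{f,v}$ with fixed $f$ by the common factor $(ad-bc)/(c\kappa_f+d)^2$, which is precisely an element of the face--scaling group $(\F^*)^{g+3}$; hence $\Theta|_{U_x}$ descends to the quotient $M_G(\F)$, is independent of the chart, and glues to a global morphism $\Theta\colon\chi_{G^*}(\F\PP^1)\to M_G(\F)$. By construction $\Theta(\kappa)$ satisfies Equation \ref{eqn:cross} with respect to $\kappa$, so $\rho\circ\Theta=\op{id}$.

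It then remains to prove $\Theta\circ\rho=\op{id}$, for which the key point is a uniqueness lemma: any $\mu\in\widetilde{M_G(\F)}$ satisfying Equation \ref{eqn:cross} with respect to a fixed coloring is determined up to the action of the face--scaling group. Indeed, Equation \ref{eqn:cross} around the boundary of a face $f$ pins down all consecutive ratios $\mu_{f,v}/\mu_{f,v'}$, leaving exactly one scalar of freedom per face. Since both $\mu$ and $\Theta(\rho(\mu))$ satisfy Equation \ref{eqn:cross} with respect to the common coloring $\rho(\mu)$, they must coincide in $M_G(\F)$, giving $\Theta\circ\rho=\op{id}$. As $\rho$ and $\Theta$ are mutually inverse morphisms defined over $\F$, the charts and all formulas being rational over the base field, they are inverse isomorphisms of $\F$--varieties; if one prefers, this last upgrade can be obtained exactly as in Corollary \ref{cor:conclusion} by invoking \cite[Proposition 2.7.1(viii)]{GD}. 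The main obstacle in this argument is the well--definedness of $\Theta$ as a map into the quotient $M_G(\F)$ -- that is, verifying the per--face equivariance and the gluing across the charts $U_x$ -- together with the uniqueness--up--to--face--scaling lemma needed for $\Theta\circ\rho=\op{id}$; the remaining verifications are the short computations indicated above.
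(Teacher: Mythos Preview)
Your proof is correct. The paper gives no separate argument for this proposition, saying only that it is ``implicit'' in the proof of Lemma \ref{lem:aug_to_color} and the construction of $\Phi$; your write-up supplies those details and is consistent with that intended route.

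The organization is slightly different from what the paper suggests: rather than rerunning the entire $X_G$ argument with $M_G$ in place of $X_G$, you use the already established isomorphism $\Phi=\rho\circ\alpha$ to obtain the section $\Theta=\alpha\circ\Phi^{-1}$ (your chart formula $\alpha\circ\wt\Psi_x$ is exactly this on $U_x$), and then isolate the one genuinely new point, the uniqueness--up--to--face--scaling lemma, to get $\Theta\circ\rho=\op{id}$. That lemma is sound: Equation~\ref{eqn:cross} pins down every consecutive ratio $\mu_{f,v}/\mu_{f,w}$ around a face, Equation~\ref{eqn:mu_sum} is homogeneous in the per-face scalar, and Equation~\ref{eqn:mu_prod} rewrites as $\mu_{f,v}/\mu_{f,w}=\mu_{g,w}/\mu_{g,v}$, which imposes no cross-face constraint once the coloring is fixed. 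Your M\"obius computation showing that $\mu_{f,v}$ transforms by the face-only factor $(ad-bc)/(c\kappa_f+d)^2$ is also correct and is exactly what makes $\Theta$ descend and glue. The only point to keep explicit is the one you flag at the end: the charts $U_x$ cover $\wt{\chi_{G^*}(\F\PP^1)}$ only when $|\F|\geq g+3$, so for general $\F$ you do need the passage through $\overline\F$ and \cite[Proposition 2.7.1(viii)]{GD}, just as in Corollary~\ref{cor:conclusion}.
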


\begin{cor}
The natural morphism $X_G(\F) \rightarrow M_G(\F)$ is also an isomorphism.
\end{cor}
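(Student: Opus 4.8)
The plan is to deduce the corollary from the two results that precede it by a pure two-out-of-three (cancellation) argument, so that essentially no new work is required. Write $\psi\colon X_G(\F)\to M_G(\F)$ for the natural morphism in question and $\rho\colon M_G(\F)\to\chi_{G^*}(\F\PP^1)$ for the morphism of the preceding Proposition. By construction the coloring attached to an augmentation factors through the $\mu$-data, so the composite $\rho\circ\psi$ is exactly the isomorphism $\Phi\colon X_G(\F)\to\chi_{G^*}(\F\PP^1)$ of Theorem \ref{thm:augs_colors}. Since $\Phi$ is an isomorphism and, by the preceding Proposition, so is $\rho$, the factor $\psi=\rho^{-1}\circ\Phi$ is an isomorphism as well. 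Thus the whole statement reduces to confirming two things: that $\psi$ is a well-defined morphism landing in $M_G(\F)$, and that the triangle commutes.

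For well-definedness I would first define $\psi$ at the level of $\widetilde{X_G(\F)}$ by sending an augmentation $\vec\lambda$ to the family of corner values $\mu_{f,v}:=H(\tau_f(v))=(-1)^{r_v}\lambda_m\lambda_n\lambda_k^{-1}$, and then check that this lands in $\widetilde{M_G(\F)}$ before descending to the quotient. Equation \ref{eqn:mu_sum}, namely $\sum_{v\in f}\mu_{f,v}=0$, is literally the augmentation equation $\wt\dd f=0$ that cuts out $\widetilde{X_G(\F)}$, so it holds tautologically. Equation \ref{eqn:mu_prod} is a one-line local computation at each edge: in the notation of Figure \ref{fig:Edge_Setup}, with common edge $p$ between $v$ and $w$ and side edges $q,r$ at $v$ and $s,t$ at $w$, the two corners of $v$ on faces $f,g$ give
\[
\mu_{f,v}\,\mu_{g,v}=(-1)^{2r_v}\lambda_p^2\,\lambda_q\lambda_r^{-1}\,\lambda_r\lambda_q^{-1}=\lambda_p^2,
\]
and the identical computation at $w$ gives $\mu_{f,w}\,\mu_{g,w}=\lambda_p^2$, so the two sides of Equation \ref{eqn:mu_prod} agree. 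Finally, the assignment $\vec\lambda\mapsto(\mu_{f,v})$ is equivariant for the torus actions (the $\Z^{g+3}$-action used to form $X_G$ and the $(\F^*)^{g+3}$-action defining $M_G$), since rescaling the edges of a face rescales all corners of that face by a common factor; hence $\psi$ descends to $X_G(\F)\to M_G(\F)$.

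For commutativity I would invoke the content of Lemma \ref{lem:aug_to_color}: the coloring produced from an augmentation depends only on the ratios of the $H(\tau_f(v))$ around each face, through the cross-ratio recipe of Equation \ref{eqn:cross}, and this is precisely the recipe defining $\rho$. Therefore $\rho\circ\psi$ and $\Phi$ are computed by the same propagation and agree, completing the factorization. I do not expect any genuine obstacle here: the substantive content already lives in Theorem \ref{thm:augs_colors} and in the preceding Proposition, and this corollary is the formal cancellation $\psi=\rho^{-1}\circ\Phi$. The only point demanding care is the bookkeeping in the previous paragraph, verifying that the corner assignment $\vec\lambda\mapsto(\mu_{f,v})$ satisfies both defining relations of $M_G(\F)$ and is equivariant, so that $\psi$ is a morphism of varieties rather than merely a map of point sets.
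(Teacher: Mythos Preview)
Your proposal is correct and is exactly the argument the paper has in mind: the corollary is stated without proof because the sentence immediately preceding the Proposition already records that $\Phi$ factors as $X_G(\F)\to M_G(\F)\to\chi_{G^*}(\F\PP^1)$, so once both $\Phi$ and $\rho$ are isomorphisms the two-out-of-three cancellation is immediate. Your added bookkeeping (verifying \eqref{eqn:mu_prod}, \eqref{eqn:mu_sum}, and the descent to the quotients) is correct and simply spells out what the paper leaves implicit; the only cosmetic point is that the passage from $\wt{X_G}$ to $X_G$ is not literally a torus quotient in the paper's setup, so you would route the descent through $X_T\simeq X_G$ as the paper does rather than invoke equivariance directly.
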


For an element $\mu \in \widetilde{M_G(\F)}$ and an edge $e$, we shall set
$$\nu_e := \mu_{f,v}\mu_{g,v} = \mu_{f,w}\mu_{g,w}$$
using the same notation of Equation \ref{eqn:mu_prod}.

\begin{remark}
There is also a natural morphism $\widetilde{X_G(\F)} \rightarrow \wt{M_G(\F)}$, but it is not generally an isomorphism since it is not necessarily surjective on rational points. Indeed, surjectivity would require $\lambda_e^2 = \nu_e$. So if $\F$ is not closed under square roots, then there is no hope for this to be an isomorphism.
\end{remark}

We present the following neat combinatorial identity coming from comparing this viewpoint. However, despite the simple nature of the identity, the author is not aware of a purely combinatorial way of obtaining the correct sign in the statement without first lifting to an augmentation.
\begin{cor} \label{cor:mu_col}
Given an element $\mu \in \widetilde{M_G(\F)}$, one has
$$\prod_{e} \nu_e = (-1)^{g+1}\prod_{(f,v)}\mu_{f,v}$$
where the left-hand product is taken over all edges and the right-hand product is taken over all face-vertex adjacencies.
\end{cor}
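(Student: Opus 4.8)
The plan is to prove the identity in two stages: first establish it up to an undetermined global sign by a direct combinatorial computation that uses only Equation \ref{eqn:mu_prod}, and then pin down the sign by reducing to the case where $\mu$ is the image of an honest augmentation, where both sides can be evaluated explicitly.

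\textbf{Stage 1: the identity up to sign.} For each edge $e$ with endpoints $v,w$ separating faces $f,g$, Equation \ref{eqn:mu_prod} gives $\nu_e^2 = \mu_{f,v}\mu_{g,v}\mu_{f,w}\mu_{g,w}$. Multiplying over all edges, I would observe that each face--vertex adjacency $(f,v)$ occurs in exactly two of these fourfold terms, namely those coming from the two edges of $f$ incident to $v$. Hence
$$\Big(\prod_e \nu_e\Big)^2 = \prod_e \nu_e^2 = \prod_{(f,v)} \mu_{f,v}^2 = \Big(\prod_{(f,v)} \mu_{f,v}\Big)^2.$$
Since all quantities lie in $\F^*$, it follows that $\prod_e \nu_e = \epsilon \prod_{(f,v)}\mu_{f,v}$ for a sign $\epsilon \in \{\pm 1\}$ a priori depending on $\mu$; in characteristic $2$ this already finishes the proof, so assume otherwise. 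The whole difficulty is to show $\epsilon = (-1)^{g+1}$, which, as the preceding remark explains, seems out of reach without lifting to an augmentation.

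\textbf{Stage 2: reduction to augmentations.} First I would check that $\epsilon$ is constant on $(\F^*)^{g+3}$-orbits: scaling all corners around a fixed face $f$ by $c_f$ multiplies both $\prod_e\nu_e$ and $\prod_{(f,v)}\mu_{f,v}$ by $c_f^{\ell}$, where $\ell$ is the common number of edges and vertices on the boundary of $f$, so $\epsilon$ is unchanged and descends to a function on $M_G(\F)$. Passing to $\overline{\F}$ and using the isomorphism $X_G(\overline{\F}) \cong M_G(\overline{\F})$, every $(\overline{\F}^*)^{g+3}$-orbit in $\widetilde{M_G(\overline{\F})}$ is the orbit of the image $\mu^\lambda$ of some augmentation $\lambda \in \widetilde{X_G(\overline{\F})}$, where $\mu^\lambda_{f,v} = (-1)^{r_v}\lambda_m\lambda_n\lambda_k^{-1}$ with $m,n$ the two edges of $f$ meeting at $v$ and $k$ the opposite edge. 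For such $\mu^\lambda$, a short computation (the opposite-edge factors cancel) gives $\nu_e = \mu^\lambda_{f,v}\mu^\lambda_{g,v} = \lambda_e^2$, so $\prod_e \nu_e = (\prod_e \lambda_e)^2$.

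\textbf{Sign computation on an augmentation.} Here I would track exponents and signs separately. At each endpoint of an edge $e$, the factor $\lambda_e$ appears with exponent $+1$ in the two corners where $e$ bounds a face and $-1$ in the corner of the third face, for a net $+1$ per endpoint and total exponent $2$; hence the $\lambda$-content of $\prod_{(f,v)}\mu^\lambda_{f,v}$ is $(\prod_e\lambda_e)^2$. The total sign is $\prod_{(f,v)}(-1)^{r_v} = (-1)^{3\sum_v r_v} = (-1)^{\sum_v r_v}$, and since $\sum_v r_v$ counts each edge once at its source, $\sum_v r_v = |E| = 3g+3$, giving the sign $(-1)^{3g+3} = (-1)^{g+1}$. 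Therefore $\prod_{(f,v)}\mu^\lambda_{f,v} = (-1)^{g+1}\prod_e\nu_e$, i.e.\ $\epsilon = (-1)^{g+1}$ on this orbit, hence on all of $M_G(\overline{\F})$ by orbit-invariance, and hence for the given $\mu \in \widetilde{M_G(\F)} \subseteq \widetilde{M_G(\overline{\F})}$.

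The main obstacle is precisely the sign: the squared identity is elementary, but the $\mu$-data alone only constrains the ratio $\prod_e\nu_e / \prod_{(f,v)}\mu_{f,v}$ to be $\pm 1$, and determining it requires choosing a square root $\lambda_e$ of each $\nu_e$ compatible with the orientation signs — that is, an augmentation. This forces the passage to $\overline{\F}$, where the square roots exist and the lift is supplied by the isomorphism $X_G \cong M_G$; descent back to $\F$ is then immediate since $\mu$ and all the displayed quantities already live over $\F$.
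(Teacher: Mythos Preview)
Your argument is correct and follows essentially the same route as the paper: both reduce to the case where $\mu$ is in the $(\F^*)^{g+3}$-orbit of an element $\mu^\lambda$ coming from an augmentation $\lambda$, using that the identity is orbit-invariant and that $X_G \cong M_G$, and then compute both sides explicitly in terms of the $\lambda_e$, with the sign $(-1)^{g+1}$ arising from $\sum_v r_v = |E| = 3g+3$.

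Two minor differences worth noting. First, your Stage~1 squared identity, while a pleasant warm-up, is not actually used in the rest of the argument and does not appear in the paper; once you know the statement holds for one representative of each orbit, you are done without ever needing to know in advance that the two sides agree up to sign. Second, you pass to $\overline{\F}$ to guarantee a lift to an augmentation, whereas the paper stays over $\F$: since $X_G(\F)\cong M_G(\F)$ holds as $\F$-varieties (Corollary~\ref{cor:conclusion} and the subsequent corollary), and $X_G(\F)\cong X_T(\F)\subset \wt{X_G(\F)}$, every $\F$-point of $M_G(\F)$ already lifts to $\wt{X_G(\F)}$ without enlarging the field. Your detour through $\overline{\F}$ is harmless but unnecessary.
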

\begin{proof}
We have that $\mu$ descends to some element of $M_G(\F)$, and hence comes from an augmentation $\lambda \in X_G(\F)$, which in turn has some lift to some $\wt\lambda \in \wt{X_G(\F)}$. This maps to some $\mu' \in \wt{M_G(\F)}$ representing the same coloring as $\mu$. Meanwhile, the equation we wish to prove is invariant under the action of $(\F^*)^{g+3}$, and so it suffices to prove the equation holds for $\mu'$ and the corresponding $\nu'$. But in this case, we can write $\mu'$ and $\nu'$ in terms of the $\wt\lambda$ values. This yields
\begin{eqnarray*}
\prod_{(f,v)}\mu'_{f,v} &=& \prod_{(f,v)} (-1)^{r_v}\wt\lambda_m\wt\lambda_n\wt\lambda_k^{-1} \\
	&=& \prod_v (-1)^{3r_v} (\wt\lambda_m\wt\lambda_n\wt\lambda_k^{-1})(\wt\lambda_n\wt\lambda_k\wt\lambda_m^{-1})(\wt\lambda_k\wt\lambda_m\wt\lambda_n^{-1}) \\
	&=& \prod_v (-1)^{r_v} \wt\lambda_m\wt\lambda_n\wt\lambda_k \\
	&=& \prod_v (-1)^{r_v} \prod_e \wt\lambda_e^2 \\
	&=& \prod_v (-1)^{r_v} \prod_e \nu'_e
\end{eqnarray*}
where we write $e_m$, $e_n$, and $e_k$ for the edges around $v$. Now recall $r_v$ is just the number of outgoing edges at $v$. So one can think of $(-1)^{r_v} = (-1)^{r_m}(-1)^{r_n}(-1)^{r_k}$ where $r_i$ is $1$ if $e_i$ is outgoing and $0$ otherwise. Hence, since each edge is outgoing in one vertex and ingoing in another, each edge contributes a minus sign to the total product. There are $3g+3$ edges, so $\prod_v (-1)^{r_v} = (-1)^{3g+3} = (-1)^{g+1}$, and this completes the claim.
\end{proof}
%---------------------------------------------------------------
%---------------------------------------------------------------
%---------------------------------------------------------------
\section{Vertex crossing with signs}\label{app:signs}
In this appendix we verify that a tine-vertex crossing move leaves the differential invariant. We do this for all possible configurations of orientation.

\begin{figure}[h!]
\centering
  \includegraphics[scale=0.65]{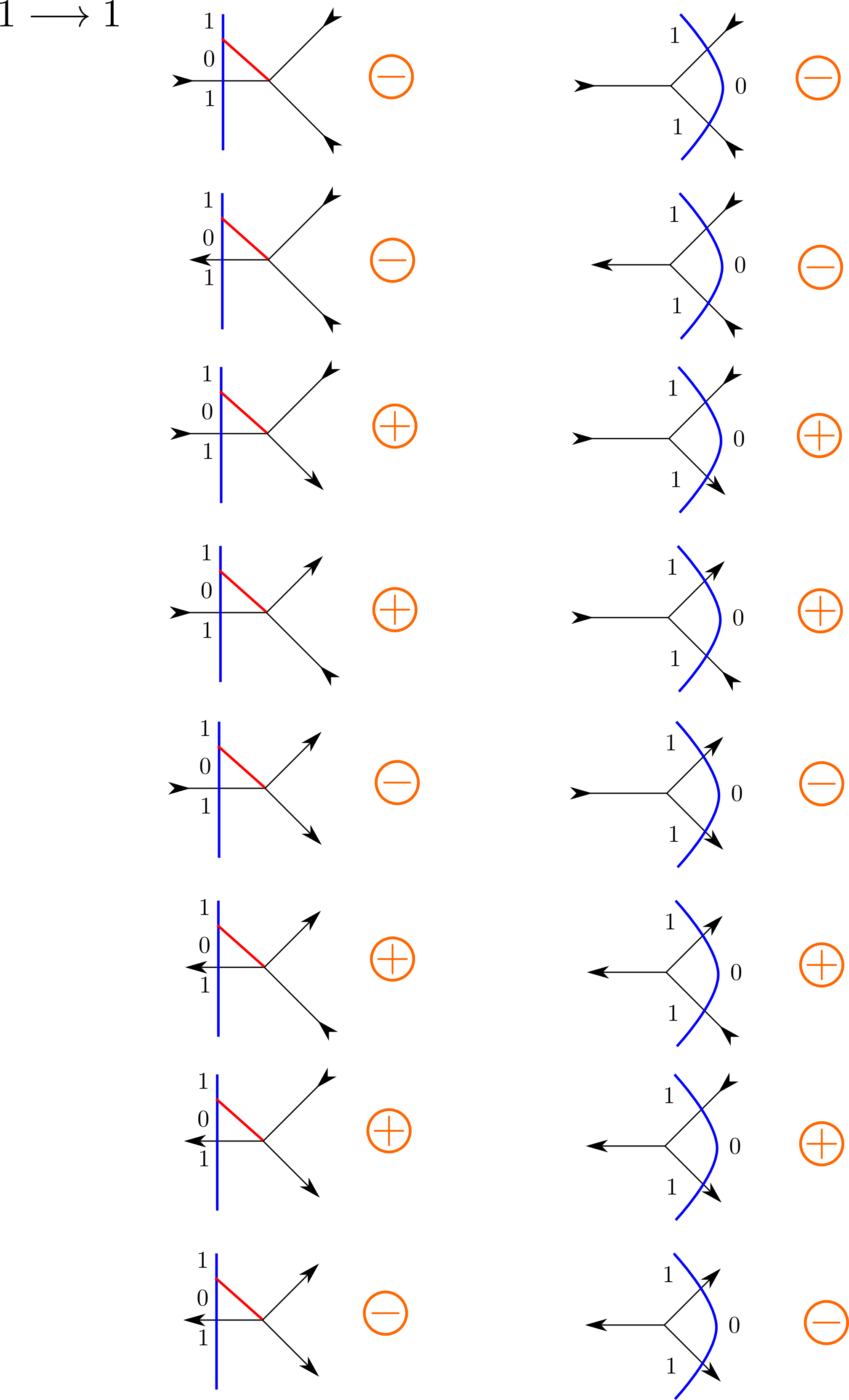}
  \caption{Signs I}
\end{figure}
\begin{figure}[h!]
\centering
  \includegraphics[scale=0.65]{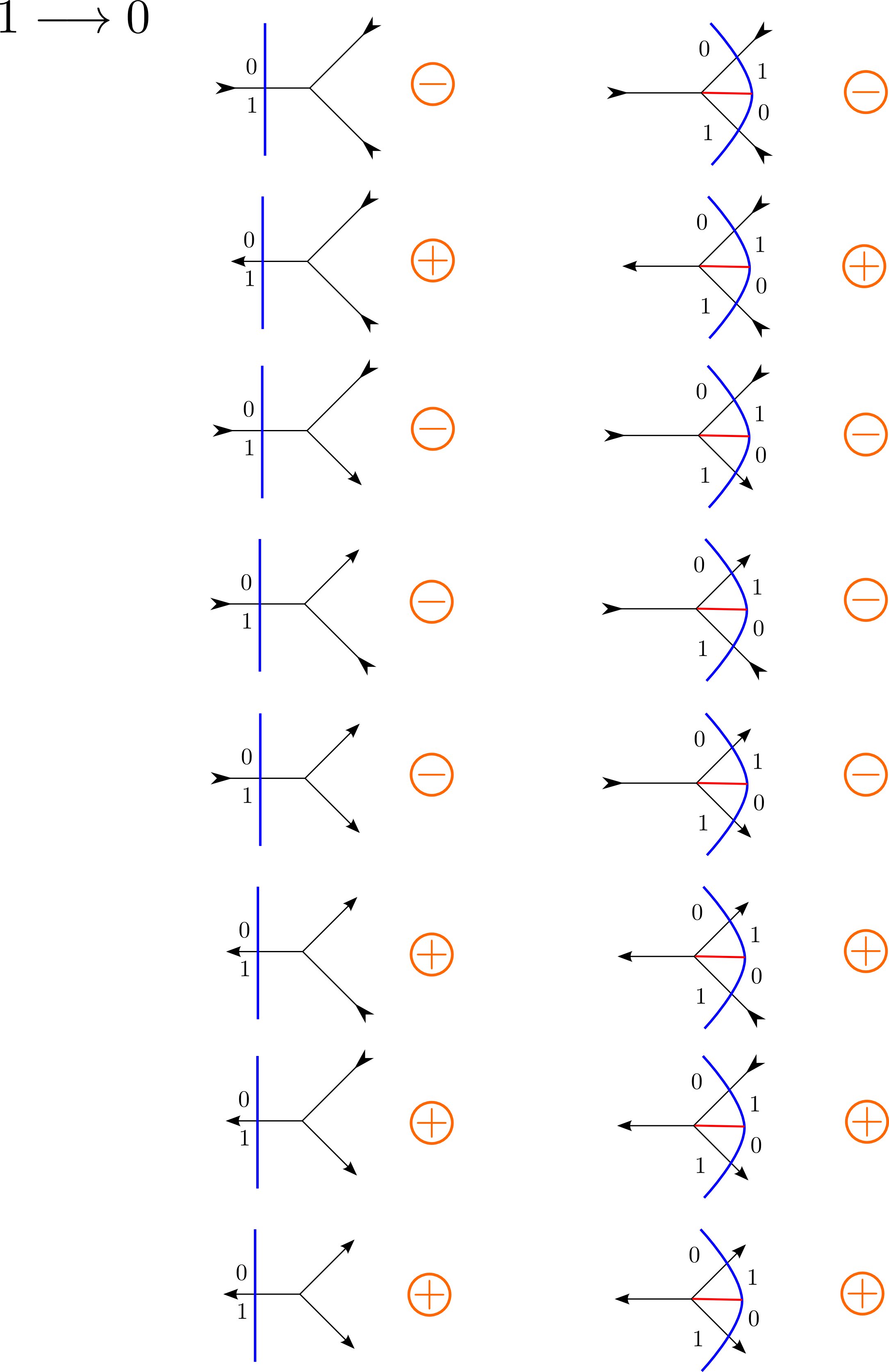}
  \caption{Signs II}
\end{figure}
\begin{figure}[h!]
\centering
  \includegraphics[scale=0.65]{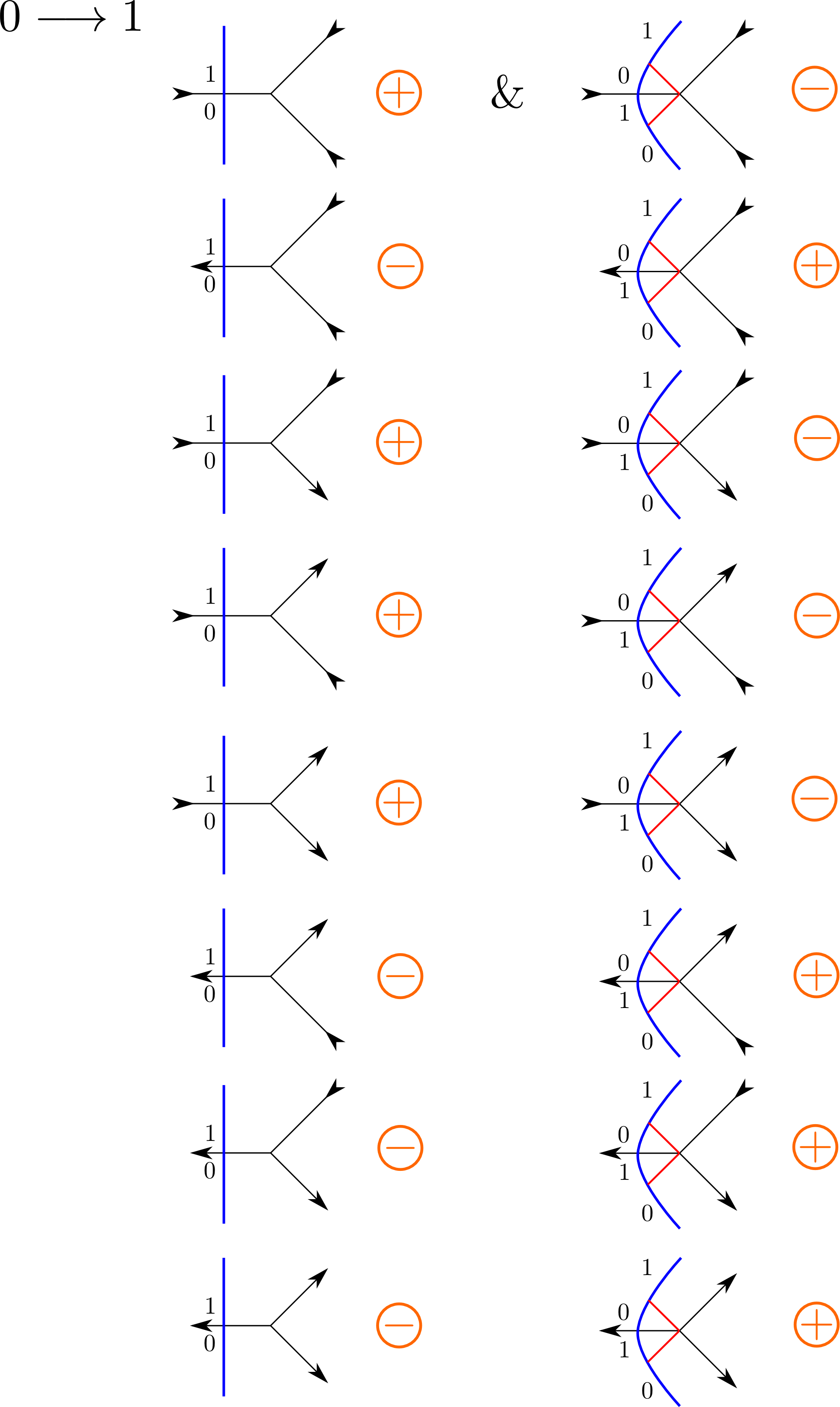}
  \caption{Signs III}
\end{figure}
\begin{figure}[h!]
\centering
  \includegraphics[scale=0.65]{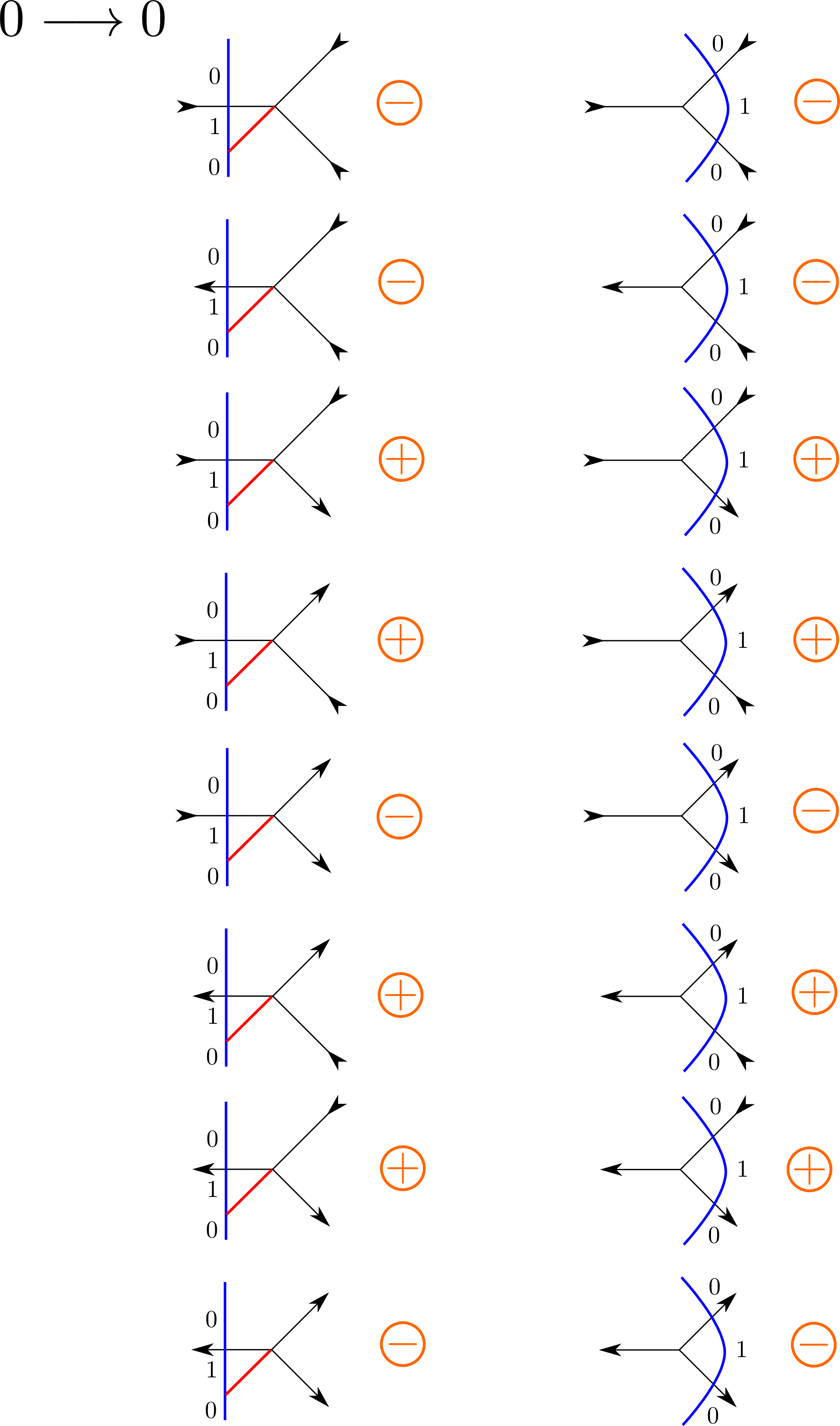}
  \caption{Signs IV}
\end{figure}
%%%%%%%%%%%%%%%%%%%%%%%%%%%%%%%%%%%%%%%%%%%%%%%%%%%%%%%%%%%%%%%%%%%%%%%%%%%%%%%%%%%%%%%%%%%%%%%%%%%%%%%%%%%%%%%%%%%%%%%%%%%%%%%%%%%%%%%
%%%%%%%%%%%%%%%%%%%%%%%%%%%%%%%%%%%%%%%%%%%%%%%%%%%%%%%%%%%%%%%%%%%%%%%%%%%%%%%%%%%%%%%%%%%%%%%%%%%%%%%%%%%%%%%%%%%%%%%%%%%%%%%%%%%%%%%

%%%%%%%%%%%%%%%%%%%%%%%%%%%%%%%%%%%%%%%%%%%%%%%%%%%%%%%%%%%%%%%%%%%%%%%%%%%%%%%%%%%%%%%%%%%%%%%%%%%%%%%%%%%%%%%%%%%%%%%%%%%%%%%%%%%%%%%
%%%%%%%%%%%%%%%%%%%%%%%%%%%%%%%%%%%%%%%%%%%%%%%%%%%%%%%%%%%%%%%%%%%%%%%%%%%%%%%%%%%%%%%%%%%%%%%%%%%%%%%%%%%%%%%%%%%%%%%%%%%%%%%%%%%%%%%
%%%%%%%%%%%%%%%%%%%%%%%%%%%%%%%%%%%%%%%%%%%%%%%%%%%%%%%%%%%%%%%%%%%%%%%%%%%%%%%%%%%%%%%%%%%%%%%%%%%%%%%%%%%%%%%%%%%%%%%%%%%%%%%%%%%%%%%
\end{document}